\documentclass{article}
\usepackage[utf8]{inputenc}
\usepackage{enumitem}

\newpage

\newcommand{\red}[1]{{\color{red}{#1}}} 

\usepackage{amsmath}
\usepackage{amssymb}
\usepackage{amsfonts}
\usepackage{amsthm}
\usepackage{bbm}
\usepackage[mathscr]{eucal}
\usepackage{a4wide}
\usepackage{authblk}
\usepackage{float}
\usepackage{comment}
\usepackage{enumitem}

\usepackage[normalem]{ulem}

\usepackage{mathtools}
\mathtoolsset{showonlyrefs}

\usepackage{hyperref}
\hypersetup{
	colorlinks = true,
	urlcolor = {magenta},
	citecolor = {red},
	linkcolor = {blue}
}
\usepackage{doi}
\usepackage[square,numbers]{natbib}

\usepackage{xcolor}
\usepackage{todonotes}

\newcommand{\RNum}[1]{\uppercase\expandafter{\romannumeral #1\relax}}

\newcommand{\pr}{\mathbb{P}}								

\newcommand{\Prob}[1]{\pr\left(#1\right)}					

\newcommand{\CProb}[2]{\pr\left(#1 \mid #2\right)}	

\newcommand{\E}{\mathbb{E}}								

\newcommand{\Exp}[1]{\E\left[#1\right]}					

\newcommand{\CExp}[2]{\E\left[\left.#1\right|#2\right]}	

\newcommand{\plim}{\ensuremath{\stackrel{\pr}{\rightarrow}}}	
\newcommand{\dlim}{\ensuremath{\stackrel{d}{\rightarrow}}}		

\newcommand{\1}{\mathbbm{1}}								
\newcommand{\ind}[1]{\1_{\{#1\}}}	
\newcommand{\indE}[1]{\1_{#1}}					

\newcommand\numberthis{\addtocounter{equation}{1}\tag{\theequation}}

\allowdisplaybreaks

\newtheorem{theorem}{Theorem}[section]
\newtheorem{lemma}[theorem]{Lemma}

\newtheorem{proposition}[theorem]{Proposition}
\newtheorem{corollary}[theorem]{Corollary}
\newtheorem{conjecture}[theorem]{Conjecture}

\newtheorem{problem}[theorem]{Problem}

\theoremstyle{definition}
\newtheorem{definition}[theorem]{Definition}
\newtheorem{assumption}[theorem]{Assumption}

\newtheorem{remark}[theorem]{Remark}

\numberwithin{equation}{section}

\newcommand{\bd}{\mathbf d}

\newcommand{\bv}{\mathbf v}

\newcommand{\cA}{\mathcal A}

\newcommand{\cB}{\mathcal B}

\newcommand{\cC}{\mathcal C}
\newcommand{\bc}{\mathbf c}

\newcommand{\sD}{\mathscr D}
\newcommand{\cE}{\mathcal E}
\newcommand{\sF}{\mathscr F}
\newcommand{\cG}{\mathcal G}

\newcommand{\cL}{\mathcal L}
\newcommand{\sL}{\mathscr L}
\newcommand{\cM}{\mathcal M}
\newcommand{\cN}{\mathcal N}

\newcommand{\cT}{\mathcal T}

\newcommand{\cS}{\mathcal S}
\newcommand{\sS}{\mathscr S}

\newcommand{\cY}{\mathcal Y}

\newcommand{\T}{\mathbb T}
\newcommand{\sT}{\mathscr T}

\newcommand{\bt}{\mathbf t}

\newcommand{\N}{\mathbb N}
\newcommand{\R}{\mathbb R}
\newcommand{\G}{\mathbb G}

\newcommand{\sM}{\mathscr{M}}

\newcommand{\sX}{\mathscr{X}}

\newcommand{\bp}{\boldsymbol{p}}

\newcommand{\by}{\boldsymbol{y}}

\newcommand{\bB}{\boldsymbol{B}}
\newcommand{\bD}{\boldsymbol{D}}

\newcommand*{\be}{\begin{equation}}
	\newcommand*{\ee}{\end{equation}}
\newcommand*{\ba}{\begin{aligned}}
	\newcommand*{\ea}{\end{aligned}}

\newcommand{\eps}{\epsilon}

\newcommand{\cF}{\mathcal F}

\newcommand{\BP}{\mathrm{BP}}

\newcommand{\invisible}[1]{}

\usepackage{setspace}
\title{Multipartite random graphs with given degrees: local limit, revisiting the giant, distances}

\begin{document}
	\author{Neeladri Maitra\thanks{e-mail: nmaitra@illinois.edu}}\affil{Department of Mathematics,
University of Illinois at Urbana-Champaign}
    \date{}
	
	\maketitle

\begin{abstract}
We consider multipartite random graphs with given degree sequences, within and across different partitions. Under general assumptions, we prove the local limit of this graph is a multi-type branching process, establish that a giant component exists only when the local limit survives, and deduce that the typical distance is of logarithmic order in probability in the supercritical regime. Our analysis removes two major assumptions from \cite{gamarnik2015giant}, where the giant component problem for this model was first considered. In particular, we do not assume irreducibility of the local limit, and provide a general framework to extract giant components even when the limiting branching process is reducible, which we hope to be useful in other contexts. We also provide a new simpler survival criterion of multi-type branching processes, which we hope to be useful when direct calculation of the spectral radius of the offspring matrix may prove to be difficult.
	\end{abstract}

    \newpage

\tableofcontents

\newpage

\section{Introduction} 

The study of random graphs with a given degree sequence has a rich history. Enumeration results for random graphs with a given degree sequence go back to Bender and Canfield \cite{bender1978asymptotic}, refined later by Bollobás \cite{bollobas1980probabilistic} through probabilistic techniques. The advent of modern network science \cite{barabasi2013network,dorogovtsev2013evolution,newman2018networks,pastor2007evolution} predicting that real-world networks are typically `scale-free', motivated the need to study structural properties of such graphs under general assumptions on the degree sequences. The works that began exploration in this direction are by Molloy and Reed \cite{molloy1995critical,molloy1998size}, who started the study of the emergence of a giant connected component in these graphs. Their results later were generalized via a continuous-time approach by Janson and Luczak \cite{janson2009new}, and under a large-deviation estimate assumption by Bollobás and Riordan \cite{bollobas2015old}. For a comprehensive account of the rigorous random graph literature in this direction, we refer the reader to van der Hofstad's book \cite[Notes, Chapter 4]{van2024random}, or Dhara's thesis \cite{dhara2018critical}.

In many real-world networks, individuals are often separated into different communities, and thus it is crucial to model such networks using random graphs with a vertex set divided into prescribed community structures. Examples where multipartite networks have been used to model network data include plant-pollinator networks \cite{lampo2024structural}, security applications \cite{kent2016cyber}, human trafficking networks \cite{szekely2015building} and biological applications such as drug-target prediction \cite{zong2021drug} and medicine \cite{jafari2020unsupervised}. 

In this paper, we marry the notions of random graphs with a given degree sequence with multipartite random graphs, to study random multipartite graphs with a given degree sequence; i.e., we fix the degree sequences of inter and intra partition connections, and uniformly sample from graphs with such specified degree sequences. The model was first considered by Gamarnik and Misra \cite{gamarnik2015giant}, where under certain restrictive conditions, the authors prove the existence of a unique giant connected component, when a related branching process survives. Recent interest in machine learning problems, and particularly the problem of community detection, inferring the presence of multipartite structures in networks \cite{abbe2018community,arenas2008motif,10.1145/3071178.3071295,lancichinetti2009community,larremore2014efficiently,liu2016community}, motivated us to revisit this related model, with the goal of removing a couple of extra assumptions the authors of \cite{gamarnik2015giant} work under, and proving the result in as much generality as possible. Along the way we also develop a new survival condition for multi-type branching processes, which is interesting on its own. The following informal statement summarizes our main results.

\begin{theorem}[Informal; see Theorems \ref{thm:loc_lim}, \ref{thm:extinct_1dep}, \ref{thm:giants_MCMs} and \ref{thm:dist} in the following sections]
    Consider a sequence of uniformly sampled random multipartite graphs of growing sizes, with given inter and intra partition degree sequences, and where each vertex partition has a positive proportion of vertices. Under some standard regularity assumptions on the degree sequences, the local limit of the graph sequence exists as a branching process tree. Without any assumption on the irreducibility of the local limit, there is a simple (new) criterion that checks whether the limiting tree survives, and when it does survive, is tailored to extract linear sized connected components in the pre-limit via exploration processes. Finally, in the case of survival, the typical distance is of logarithmic order in probability.
\end{theorem}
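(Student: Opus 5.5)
The informal theorem bundles four statements: a local limit, a survival criterion, extraction of giants, and typical distances. I would prove them in that order, each building on the previous.

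\textbf{Local limit.} Represent the uniform multipartite graph through a multipartite configuration model. Each vertex $v$ in partition $i$ carries $d_{ij}(v)$ half-edges of ``colour'' $(i,j)$, and half-edges of colour $(i,j)$ are matched uniformly with those of colour $(j,i)$ (uniformly among themselves when $i=j$). The first step is to show that the configuration model is simple with probability bounded away from zero under the second-moment regularity assumptions. Then any property holding whp in the configuration model also holds whp in the uniform graph.

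Local convergence in probability then follows by the standard second-moment method.
\begin{itemize}
\item \emph{Coupling.} Couple the breadth-first exploration of a uniform vertex's $r$-neighbourhood with a multi-type branching process. The root has type $i$ with probability equal to the limiting proportion of partition $i$, and a joint offspring vector $(d_{ij})_j$ drawn from the limiting degree law of partition $i$. A non-root vertex of type $j$, reached via a colour-$(i,j)$ edge, has size-biased offspring $(d_{jk}-\delta_{ki})_k$, biased by $d_{ji}$.
\item \emph{First moment.} Uniform integrability of degrees makes the coupling fail with probability $o(1)$ in $r$ steps.
\item \emph{Second moment.} Running two explorations from independent uniform vertices, they are disjoint whp. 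Hence the empirical neighbourhood counts concentrate.
\end{itemize}

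\textbf{Survival criterion.} The offspring mean matrix $M$, indexed by colours $(i,j)$, need not be irreducible. I would decompose it into its communicating classes and order them by the reachability DAG. The branching process survives with positive probability iff some class $C$ reachable from the root types either has Perron root $\rho(M_C)>1$, or is a critical class of the degenerate one-child-per-type kind fed so as to create infinite lines. The nondegenerate critical case goes extinct, and the usual Galton--Watson argument handles it class by class.

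The ``simpler criterion'' I would state in terms of sub-criticality on each class, checkable via a positive vector $u$ with $M_C u \le u$, or via a moment/generating-function condition. This avoids computing $\rho(M)$ directly.

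\textbf{Giant components.} For each supercritical class, run an exploration process that only follows colours in that class and in classes downstream of it. This is where I expect the main obstacle, because reducibility means different surviving classes could yield distinct giant components or a single merged one. The plan has four steps.
\begin{enumerate}
\item Use the local limit to get the upper bound $|\mathcal C_{(1)}|/n \le \zeta+o(1)$, where $\zeta$ is the survival probability, since vertices in large clusters must have large neighbourhoods.
\item Sprinkle, or use the standard ``large neighbourhoods connect'' argument: two vertices whose explorations reach $n^{1/2+\epsilon}$ half-edges of colours in the same terminal supercritical class connect whp.
\item Conclude that each maximal supercritical class $C$ yields whp one component containing all vertices that survive into $C$. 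Components from incomparable classes may merge only through shared downstream classes, which the DAG order tracks.
\item Match sizes with the survival probabilities via a second-moment count of vertices with surviving explorations.
\end{enumerate}

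\textbf{Distances.} For the lower bound, the expected number of paths of length $k$ between two uniform vertices is at most $C\nu^k/n$, where $\nu$ is the spectral radius restricted to reachable classes. This gives distance at least $(1-\epsilon)\log_\nu n$. For the upper bound, grow both neighbourhoods to size $n^{1/2+\epsilon}$ in $O(\log n)$ steps, since the supercritical class grows geometrically, and connect them by one further edge whp. Together these give logarithmic typical distance in probability.
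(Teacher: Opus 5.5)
Your steps 3--4 of the giant argument would fail, and for the same reason so would the constant in your distance lower bound. How colour classes are joined in $G_n$ is decided by sublinear structure that the local limit cannot see. Under Assumption~\ref{assump:one_good_deg_greater_2}, reachability among GOOD colours is symmetric: for GOOD $(a,b)$, $\Exp{D_{(a,b,c)}}>0$ forces $(b,c)\to(c,b)\to(b,a)\to(a,b)$. So your class DAG has no edges at all. Distinct classes can nevertheless be merged in $G_n$ by $o(n)$ BAD edges. The example of Section~\ref{sec:CMs_sharpness} has the same local limit in two cases: when the BAD cross sequences are null there are two giants, and when they carry about $n/\log n$ vertices of cross-degree $3$ there is one giant whp. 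Randomising between the two makes $|\cC^{(1)}_n|/n$ fluctuate. Hence neither ``one giant per class'' nor a limit for $|\cC^{(1)}_n|/n$ can be proved, which is why Theorem~\ref{thm:giants_MCMs} only asserts $\liminf_n\Exp{|\cC^{(1)}_n|}/n>0$. Existence, which is all the statement requires, does follow from your step 2 inside a single supercritical class.

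The same example breaks your distance constant. There the degree-$3$ vertices form a BAD block with $\sum_x d_x(d_x-1)/\sum_x d_x=2$. This creates shortcuts making typical distances at most about $\log_2 n$, even when the GOOD blocks grow at rate below $2$. So your path count must use the spectral radius of the finite-$n$ colour matrix including BAD colours, not that of the limit. Counting in the configuration model also requires the BAD configuration models to be simple with probability bounded below, i.e.\ $\sum_x d_x^2=O(\sum_x d_x)$. This has to come from the $l=i$ case of Assumption~\ref{assump:offspring_degree_reg}; the second-moment assumption only gives $o(n)$. The paper avoids this by sampling BAD parts directly as uniform simple graphs (Corollary~\ref{cor:sample_Gn}, Lemma~\ref{lem:sn_small}). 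It proves the lower bound only when all pairs are GOOD (Theorem~\ref{thm:dist}(3)), leaving the general case as Conjecture~\ref{conj:dist}.

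Your local-limit plan is the paper's: pairing-as-you-explore coupling, plus a joint statement for several roots. Your joint size-biased offspring vector is in fact the correct limit; the paper's $\cT_\infty$ makes the type counts independent, which is right only when the degree coordinates are independent.

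On survival your route genuinely differs, and it is the sound one. The paper claims (Theorem~\ref{thm:extinct_1dep}) that $\eta<1$ iff some reachable simple cycle $c$ of the triplet digraph has $\sM(c)>1$. It then extracts a giant from $\G_n(c)$ via the single-type BGW process $\cY_n$ of Definition~\ref{def:Yn}, joining two $\sqrt{\eps n}$-clusters with positive probability. That cycle condition is sufficient but not necessary. Split the vertices of $G(n,\lambda/n)$, $1<\lambda<2$, into two halves. Given its degree data this is a uniform multipartite graph satisfying all the assumptions, and every triplet mean equals $\lambda/2$, so every simple cycle has $\sM(c)\le\lambda/2<1$. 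Yet each individual of the local limit has $\mathrm{Poi}(\lambda)$ children, so $\eta<1$ and there is a giant, contradicting Theorem~\ref{thm:giants_MCMs}(2) as stated. The faulty step is the reduction to showing that $\T(\bv;\pi)$ dies for each \emph{fixed} type ray $\pi$. Survival only produces a random ray, and a union of uncountably many null events need not be null.

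So your Perron-root criterion is the right one, although $M_Cu\le u$ is just Collatz--Wielandt rather than a new test. Your class-restricted exploration, with a genuinely multi-type growth estimate, is also what is needed to cover all of $\eta<1$. That includes Theorem~\ref{thm:dist}(2), whose proof likewise starts from a cycle with $\sM(c)>1$.
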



\paragraph{Graph theoretic conventions.} To state our results formally, we begin with fixing a few standard graph theoretic conventions. A multigraph is a tuple $G=(V(G),E(G))$, where $V(G)$ is a countable set, and $E(G)$ is a multiset of size two multi-subsets of $V(G)$. Thus, the elements of $E(G)$ are of the form $\{u,v\}$ with $u,v \in V(G)$, possibly equal. We will say $V(G)$ is the set of \emph{vertices} of $G$ and $E(G)$ is the multiset of \emph{edges} of $G$. For a countable set $S$, throughout the paper we denote by $|S|$ its cardinality. 

For a graph $G$, we will further consider some special forms of the edge multiset $E(G)$. For example, the elements of $E(G)$ may come endowed with an \emph{orientation}, which we denote formally using an ordered pair notation $(u,v)$ where $u,v\in V(G)$, and say that the edge $(u,v)$ is \emph{directed} from $u$ to $v$, and denote this by $u \rightsquigarrow v$. Thus, if $E(G)$ is a multiset of \emph{ordered $2$-tuples} of elements from $V(G)$, we say that $G$ is a \emph{directed multigraph}, otherwise it is \emph{undirected}. In the undirected case, for an edge $\{u,v\}\in E(G)$ we also write $u \sim v$. An element of the form $\{x,x\}\in E(G)$ (or $(x,x)$ if directed) is called a \emph{self-loop} at the vertex $x$. In either the directed or the undirected case, if an edge appears more than once in the multiset $E(G)$, we say that there are \emph{multiple edges} between $u$ and $v$, where $u,v\in V(G)$ are the vertices in the edge.

A \emph{simple graph} is a multigraph $G=(V(G),E(G))$ with no multiple edges between any two vertices, the edges are undirected and no vertex has a self-loop at it. 
For a multigraph $G=(V(G),E(G))$ and $v\in V(G)$, the number
\begin{align*}
    \sum_{e\in E(H)}\ind{v\in e}\numberthis \label{eq:def_degree}
\end{align*}
is called the \emph{degree} of $v$ in $H$ and is denoted by $d_H(v)$. Here in the above sum, the summand corresponding to $e$ is appears $M_e \times M_{v,e}$ times, where $M_e$ is the multiplicity of $e$ in the multiset $E(G)$, while $M_{v,e}$ is the multiplicity of $v$ in the multiset $e$. 
In particular, for an undirected multigraph $G$, the contribution of self-loops at some vertex $v$, to its degree, is \emph{twice} their multiplicity in the edge multiset. For an undirected multigraph $G$ and $A\subset V(G)$, we additionally denote by $d_A(v)$ the degree of $v$ \emph{into} $A$, i.e.,
\begin{align*}
    d_A(v)=\sum_{u \in A}\ind{u \sim v},
\end{align*}
the summand corresponding to $\{u,v\}$ appearing its multiplicity many times.

These definitions can be extended naturally to the directed graphs, and we will talk about the \emph{indegree} of a vertex $v$, corresponding to the number of elements of the form $(u,v)$ in $E(G)$, or the \emph{outdegree} of $v$, corresponding to the number of elements of the form $(v,u)$ in $E(G)$, or the degree of $v$ \emph{to} $A$, corresponding to the number of edges of the form $(v,u)$ with $u \in A$, the degree of $v$ \emph{from} $A$, etc.; the countings, as usual, are done respecting multiplicities appropriately.

\subsection{Multipartite simple graphs with given degrees}
In this section we present our main results on uniform multipartite graphs with a given degree sequence. 

Fix $k\geq 1$ an integer. We will consider uniformly sampling random graphs, whose vertex set is split into $k$ disjoint parts, and degree statistics of within the same part, and across different parts are given. 

More specifically, we consider a vertex set partitioned as $V=V_1\cup \dots \cup V_k$. For each $i,j\in [k]$, let $\bd_i$ denote the degree sequence within $V_i$ and $\bd_{(i,j)}$ denote the degree sequence \emph{from $V_i$ to $V_j$}. This simply means:
\begin{itemize}
    \item For each $i \in [k]$, $\bd_i=(d^{(i)}_1,\dots,d^{(i)}_{|V_i|})$ is a sequence of non-negative integers with $\sum_{x=1}^{|V_i|}d^{(i)}_x$ even and $\max_{1\leq x \leq |V_i|}d^{(i)}_x\leq |V_i|$, such that there exists at least one simple graph $G$ on the vertex set $[|V_i|]$\footnote{Here and everywhere for any positive integer $N\geq 1$ by $[N]$ we denote the set $\{1,\dots,N\}$.} with the degree of vertex $x$ being $d^{(i)}_x$.
    \item For each $i,j\in [k]$ $\bd_{(i,j)}=(d^{(i,j)}_1,\dots,d^{(i,j)}_{|V_i|})$ is a sequence of non-negative integers with $\max_{1\leq x \leq |V_i|}d^{(i,j)}_{|V_i|}\leq |V_j|$, and such that
    \begin{align*}
        \sum_{x =1}^{|V_i|}d^{(i,j)}_x=\sum_{y=1}^{|V_j|}d^{(j,i)}_y,
    \end{align*}
    and such that there exists at least one simple bipartite graph $G$ on the bipartitioned vertex set $\{1,\dots,|V_i|\}\cup \{1,\dots,|V_j|\}$ with the degree of a vertex $x$ in $G$ being $d^{(i,j)}_x$ if $x\in \{1,\dots,|V_i|\}$ and it being $d^{(j,i)}_x$ if $x\in \{1,\dots,|V_j|\}$. 
\end{itemize}
\begin{remark}[Feasible degree sequences for existence of simple graphs]\label{rem:feasible} At this point it is important to remark about the exact conditions under which a non-negative integer sequence (respectively bi-degree sequence) can be realized as the degree sequence of a simple graph (respectively a simple bipartite graph), i.e., the vertices can be labelled in a way such that the degree of the $i$-th vertex corresponds to the $i$-th entry of the sequence.
    \begin{itemize}
        \item The Erd\H{o}s-Gallai theorem \cite{erdos1960grafok} states that for any sequence $(d_1,\dots,d_n)$ of non-negative integers with total sum even and such that $d_1\geq\dots \geq d_n$, it is realizable as the degree sequence of a simple graph on $n$ vertices if and only if for every $k=1,2,\dots,n$ $$\sum_{i=1}^k d_i\leq k(k-1)+\sum_{i=k+1}^n\min\{d_i,k\}.$$ Such a sequence $(d_1,\dots,d_n)$ is also called \emph{graphic}.
        \item The Gale-Ryser theorem (published independently by Gale \cite{gale1957theorem} and Ryser \cite{Ryser_1957}) states that a tuple of sequences $(a_1,\dots,a_n)$ and $(b_1,\dots,b_m)$ with $a_1\geq \dots \geq a_n$ is realizable as the bi-degree sequence of a simple bipartite graph on the vertex set $A\cup B$ with $|A|=n$ and $|B|=m$ and with $(a_1,\dots,a_n)$ (respectively $(b_1,\dots,b_m)$) being the degree sequence of the vertices of $A$ into $B$ (respectively $B$ into $A$), if and only if $\sum_{i=1}^n a_i=\sum_{j=1}^m b_j$, and for any $k=1,2,\dots,n$
        $$\sum_{i=1}^k a_i \leq \sum_{j=1}^m \min\{b_i,k\}.$$ Such a pair $((a_1,\dots,a_n),(b_1,\dots,b_m))$ is also called \emph{bigraphic}.
    \end{itemize}
\end{remark}

We will consider simple graphs $G=(V(G),E(G))$ on the vertex set $V(G)=V=V_1\cup\dots \cup V_k$ such that for any $i,j \in [k]$, the vertices in $V_i$ can be labeled by (i.e., can be bijectively mapped to) the set $\{1,\dots,|V_i|\}$, such that the degree of $x\in \{1,\dots,|V_i|\}$ in $V_j$ is $d^{(i,j)}_x$ if $j\neq i$ and it is $d^{(i)}_x$ if $j=i$. In other words, $(\bd_{(i,j)},\bd_{(j,i)})$ is the degree sequence of the bipartite graph on $V_i \cup V_j$ formed by removing all edges within either $V_i$ or $V_j$, while $\bd_{i}$ is the degree sequence of the subgraph of $G$ spanned by $V_i$. In particular, we will sample uniformly from the set of such graphs $G$, assuming always that it is non-empty. That is, we implicitly assume throughout the paper that for each $i,j\in [k]$ the degree sequence $\bd_i$ is graphic, and the bi-degree sequence $(\bd_{(i,j)},\bd_{(j,i)})$ is bigraphic. We need some further assumptions on our graphs and degrees, which we formulate in the next few sections, along with also presenting our main results. 

\subsection{Results on simple multipartite graphs with given degrees} 

Recall the sequences $\bd_i$ and $\bd_{(i,j)}$ for $i,j\in [k]$ from the last section. We will in general consider sequences $(\bd_i(n))_{n \geq 1}$ and $(\bd_{(i,j)}(n))_{n \geq 1}$ of these degree sequences indexed by $n \geq 1$. Here the $n$-th term of these sequences will correspond to degrees in a graph $G_n=(V(G_n),E(G_n))$ with $n$ vertices, i.e., $|V(G_n)|=n$, and our assumptions will be of asymptotic nature, as $n \to \infty$. Let us write $V_n=V(G_n)$ and $E_n=E(G_n)$. As before, for a fixed $k \geq  1$, the vertex set $V_n$ is partitioned as $V_n=\cN^{(1)}\cup \dots \cup \cN^{(k)}$, where the degree sequence within $\cN^{(i)}=\cN^{(i)}(n)$ is given by $\bd_i(n)$ and the degree sequence of vertices from $\cN^{(i)}$ into $\cN^{(j)}$ is given by $\bd_{(i,j)}(n)$. Our first assumption is that all the parts are \emph{comparable} in size asymptotically: 
\begin{assumption}\label{assump:part_i_size}
    Defining $|\cN^{(i)}|=N^{(i)}=N^{(i)}(n)$, we have 
\begin{align*}
    \lim_{n \to \infty}\frac{N^{(i)}}{n}\to \gamma^{(i)}, \numberthis \label{eq:part_i_size}
\end{align*}
for some $\gamma^{(i)} \in (0,1)$, with $\sum_{i=1}^k \gamma^{(i)}=1$.
\end{assumption}

It is useful to define the random variable $T_{n,\varnothing}$ with support $[k]$, where
\begin{align*}
    \Prob{T_{n,\varnothing}=i}=\frac{N^{(i)}}{n}. \numberthis \label{eq:def_T_nphi}
\end{align*}
The interpretation is that if we sample a vertex uniformly at random, then it falls in the partition $\cN^{(T_{n,\varnothing})}$. 

Next, we list our assumptions on the degrees. First, we collect all the information encoded by $(\bd_i(n))$ and $\bd_{(i,j)}(n)$ as a matrix $\bD(n)$, whose $(i,j)$-th entry is $\bD_{(i,j)}(n)=\bd_{(i,j)}(n)$ and whose diagonal entries are $\bD_{(i,i)}=\bd_i$. Note that each entry of $\bD(n)$ is a sequence of reals. In general we allow for the sequences $\bd_i(n)$ and $\bd_{(i,j)}(n)$ to be \emph{random}.

Following van der Hofstad \cite[Chapter 9]{van2024random}, we assume that the empirical distribution function of the different types of degrees from a given partition has a limit, with convergence of the expectation and the second moment of the associated marginals:

\begin{assumption}\label{assump:degree_regularity}
    For fixed $x_1,\dots,x_k \in \R_+$, for any $i \in [k]$, 
\begin{align*}
    \frac{1}{N^{(i)}}\sum_{x=1 }^{N^{(i)}}\ind{d^{(i,1)}_x(n)\leq x_1,\dots,d^{(i,k)}_x(n)\leq x_k}\to F^{(i)}(x_1,\dots,x_k), \numberthis \label{eq:empirical_limit}
\end{align*}
for some $k$-variate joint distribution function $F^{(i)}:\R^k \to [0,1]$. We further assume for any $i,j \in [k]$,
\begin{align*}
    \frac{1}{N^{(i)}}\sum_{x=1}^{N^{(i)}}\left(d^{(i,j)}_x(n)\right)^r \to \Exp{D_{(i, j)}^r}<\infty,\;\;\text{for}\;\;r=1,2,\numberthis \label{assump:regularity_1_2}
\end{align*}
where $D_{(i ,j)}$ is the random variable having the $j$-th marginal distribution of $F^{(i)}$,
\begin{align*}
    \Prob{D_{(i , j)}\leq x}=F_{(i , j)}(x):=\lim_{x_1,\dots,x_{j-1},x_{j+1},x_k \to \infty}F^{(i)}(x_1,\dots,x_{j-1},x,x_{j+1},\dots,x_k). \numberthis \label{eq:marginal_dist_degs}
\end{align*}
\end{assumption}Defining the integer valued random variable $D_{n,(i,j)}$ as, for any $m=0,1,2,\dots$
\begin{align*}
    \Prob{D_{n,(i,j)}=m}=\frac{1}{N^{(i)}}\sum_{x=1}^{N^{(i)}}\ind{d^{(i,j)}_x(n)=m},\numberthis \label{eq:the_RV_D_nij}
\end{align*}
the previous assumption says that the mean and the second moment of $D_{n,(i,j)}$ converges to those of $D_{(i,j)}$.

The model becomes sort of degenerate for our purposes, if we do not assume that there is at least one partition with linear number of edges either going inside or leaving towards another partition, and further that most vertices don't have degree $1$ or $2$, so we assume this:
\begin{assumption}\label{assump:one_good_deg_greater_2}
    There exists $i,j\in [k]$ such that $\Exp{D_{(i,j)}}>0$. Further, for any such pair $(i,j)$, we have $\Prob{D_{(i,j)}>2}>0$.
\end{assumption}

\begin{definition}[GOOD and BAD pairs]
    A pair $(i,j)\in [k]\times [k]$ is called GOOD if $\Exp{D_{(i,j)}}>0$, otherwise it is BAD. Note that $(i,j)$ is GOOD if and only if $(j,i)$ is GOOD.
\end{definition}

Recall the matrix $\bD(n)$ with entries $\bd_{(i,j)}(n)$. Let us denote by $\cG(\bD(n))$ the set of all simple graphs on the vertex set $V=\cN^{(1)}\cup \dots \cup \cN^{(k)}$ such that the degree sequence into $\cN^{(j)}$ of vertices in $\cN^{(i)}$ is given by $\bd_{(i,j)}(n)$. Throughout the paper, we will denote by $G_n=(V_n, E_n)$ a uniformly sampled graph from this set. 

\begin{remark}[Inducing uniform graphs within and across partitions]\label{rem:uniform_induce}
    Since conditional measures of uniforms are uniform, note that:
\begin{itemize}
    \item The subgraph $G_n(i)$ of $G_n$ spanned by only the vertices in $\cN^{(i)}$ is distributed as a uniform graph on $\cN^{(i)}$ with degree sequence $\bd_i(n)$.
    \item The bipartite subgraph on the vertex subset $\cN^{(i)}\cup \cN^{(j)}$ spanned by the edges going in between two vertex partitions $\cN^{(i)}$ and $\cN^{(j)}$ is distributed as a uniform bipartite random graph on this subset with a given bi-degree sequence $(\bd_{(i,j)}(n),\bd_{(j,i)}(n))$. 
\end{itemize}
\end{remark}

\begin{remark}[Enumeration]
Define for $i,j \in k$ such that $(i,j), (i,i)$ is GOOD,
\begin{align*}
    &\phi_i:=\frac{\Exp{D_{(i,i)}(D_{(i,i)}-1)}}{\Exp{D_{(i,i)}}}\textrm{ and }\\&\phi_{i,j}:=\frac{1}{4}\frac{\Exp{D_{(i,j)}(D_{(i,j)}-1)\ind{D_{(i,j)}\geq 2}}}{\Exp{D_{(i,j)}}}\cdot \frac{\Exp{D_{(j,i)}(D_{(j,i)}-1)\ind{D_{(j,i)}\geq 2}}}{\Exp{D_{(j,i)}}}. \numberthis \label{eq:def_nu_ij}
\end{align*}
Let us also define for convenience,
\begin{align*}
    l_i(n):=\sum_{x\in \cN^{(i)}}d^{(i,i)}_x \textrm{ and } l_{i,j}(n):=\sum_{x\in \cN^{(i)}}d^{(i,j)}_x=\sum_{x\in \cN^{(j)}}d^{(j,i)}_x.
\end{align*}

    As a consequence of Remark \ref{rem:uniform_induce} and by existing enumeration results, for example, by \cite[Theorem 1.10]{angel2019limit} and \cite[Corollary 7.16]{van2016random}, under the further assumption that all pairs $(i,j)$ are GOOD, we obtain the following result on the size of $\cG(\bD(n))$. As $n \to \infty$,
    \begin{align*}
        |\cG(\bD(n))|=(1+o(1))\left(\prod_{i\in [k]}\left(\frac{e^{(-\phi_i/2-\phi_i^2/4)}(l_i(n))!!}{\prod_{x \in \cN^{(i)}}(d^{(i,i)}_x)!} \right) \prod_{i<j}\left(\frac{e^{-\phi_{i,j}}(l_{i,j}(n))!}{\prod_{x \in \cN^{(i)}}(d^{(i,j)}_x)!}\right) \right).
    \end{align*}
\end{remark}

\subsubsection{Local limit}
Let us state our result on the local structure of the graph sequence $(G_n)_{n \geq 1}$. We will need one further assumption on the degrees to have a well-defined local limit. Towards this, fix $i,j \in [k]$. For any $i,j\in [k]$ such that there is at least one non-zero entry in $\bd_{(j,i)}(n)$, let $W_n(i,j)$ be the random index in $\{1,\dots,N^{(j)}\}$ distributed as,
\begin{align*}
    \Prob{W_n(i, j)=I}=\frac{d^{(j,i)}_I}{\sum_{x =1}^{N^{(j)}}d^{(j,i)}_x},\;\;\text{for}\;\;I \in \{1,\dots,N^{(j)}\}.
\end{align*}
Thus, $W_n(i, j)$ is a random index in $\{1,\dots,N^{(j)}\}$ with mass proportional to the corresponding entry in the degree sequence $\bd_{(j,i)}$. 

Next, for $l \in [k]$, we define 
an integer valued random variable $D_{n,(i,j,l)}$ as,
\begin{align*}
    \Prob{D_{n,(i,j,l)}=m}=\begin{cases}
        &\Prob{d^{(j,l)}_{W_n(i,j)}=m},\textrm{ if }l\neq i\\& \Prob{d^{(j,i)}_{W_n(i,j)}-1=m},\textrm{ if } l=i,
    \end{cases}\;\;\textrm{ for }m=0,1,2,\dots . \numberthis \label{eq:law_Dn_ijr}
\end{align*}
Thus, $D_{n,(i,j,l)}$ is the degree into $\cN^{(l)}$ of a random vertex with law $W_n(i,j)$ if $l \neq i$ and it is one less than the degree if $l=i$.

\begin{assumption}\label{assump:offspring_degree_reg}
We assume the existence of limiting random variables $D_{(i,j,l)}$, such that
\begin{align*}
    D_{n,(i,j,l)} \dlim D_{(i,j,l)},\;\;\text{with}\;\; \Exp{D_{n,(i,j,l)}}\to \Exp{D_{(i,j,l)}}.\numberthis \label{eq:assump_regularity_tensor}  
\end{align*}

\end{assumption}

\begin{remark}\label{rem:assump}
    Note that the second assertion of Assumption \ref{assump:offspring_degree_reg} means as $n \to \infty$
\begin{align*}
    \frac{\sum_{x =1}^{N^{(j)}}d^{(j,i)}_xd^{(j,l)}_x}{\sum_{x=1}^{N^{(j)}}d^{(j,i)}_x}\to \Exp{D_{(i,j,l)}} \textrm{ if }l\neq i,\textrm{ and }\frac{\sum_{x =1}^{N^{(j)}}d^{(j,i)}_x(d^{(j,i)}_x-1)}{\sum_{x=1}^{N^{(j)}}d^{(j,i)}_x}\to \Exp{D_{(i,j,i)}}\textrm{ if }l=i.\numberthis \label{eq:deg_tensor_ijr_entry_desc}
\end{align*}
In particular, by Assumption \ref{assump:degree_regularity}, for $(i,j)$ a GOOD pair we have $\Exp{D_{(i,j,i)}}=\frac{\Exp{D_{(j,i)}(D_{(j,i)}-1)}}{\Exp{D_{(j,i)}}}$.
\end{remark}

\paragraph{Description of the local limit.} Let us next describe a multi-type branching process $\cT_\infty$ that arises as the local weak limit of the graph sequence $(G_n)_{n \geq 1}$. First, for any integer valued random variable $X$, we define $\underline{X}$ to be its \emph{shifted size-biased} copy, i.e., 
\begin{align*}
    \Prob{\underline{X}=k}=\frac{(k+1)\Prob{X=k+1}}{\Exp{X}}\;\;\text{for}\;\;k=0,1,2,\dots \numberthis \label{eq:shifted_sb}
\end{align*}
The set of types in $\cT_\infty$ is the set $\cS=[k]$. 

\begin{itemize}
    \item The root $\varnothing$ has a random type $T_\varnothing$, where $\Prob{T_\varnothing=i}=\gamma_i$ for $i \in [k]$, where recall $\gamma_i$ from Assumption \ref{assump:part_i_size}.
    \item Conditionally on $T_\varnothing=i$, for each $j \in [k]$, the root has a random number $D_\varnothing(j)$ of type-$j$ offspring that is equal in distribution to $D_{(i, j)}$, where recall these variables from Assumption \ref{assump:degree_regularity}. Furthermore, for $j\neq l$, $D_\varnothing(j)$ and $D_\varnothing(l)$ are independent. Thus, the total number of offspring $D_\varnothing=\sum_{i=1}^k D_\varnothing(i)$ satisfies the distributional identity,
    \begin{align*}
        D_\varnothing \stackrel{d}{=} \sum_{j=1}^k D_{(T_\varnothing , j)},
    \end{align*}
    where given $T_\varnothing$, the RHS above is an independent sum.
    \item For any vertex $v \in \cT_\infty$ such that $v \neq \varnothing$, conditionally on it having type $T_v$ and its parent having type $T_{p(v)}$, it produces $D_v(j)$ offspring of type $j$, and these numbers are independent across different $j$ and different $v$. Further, if $j=T_{p(v)}$, $D_v(T_{p(v)})$ has the same law as $\underline{D_{(T_v,T_{p(v)})}}$, while for any $j \neq T_{p(v)}$, $D_v(j)$ has the same law as $D_{(T_{p(v)},T_v,j)}$, where recall the variables $D_{(i,j,l)}$ from Assumption \ref{assump:offspring_degree_reg}.
\end{itemize}
The obtained branching process tree \emph{together with the vertex types} is denoted by $\cT_\infty$.

\paragraph{Rooted isomorphisms with types.} Recall for any graph $G$ by $V(G)$ and $E(G)$ we respectively denote its vertex set and edge set. A graph $G$ together with a distinguished vertex $o \in V(G)$ is called a \emph{rooted graph} and denoted as the tuple $(G,o)$. We consider rooted graphs where each vertex has a type from some finite set $S$ of types. Throughout this paper we have $S=[k]$, and we will think of the type of $v\in V_n$ as $i$ if $v \in \cN^{(i)}$. 

For two rooted graphs $(G_1,o_1)$ and $(G_2,o_2)$ with vertex types from some finite set $S$, we say they are \emph{rooted-isomorphic} if there is a bijection $\sigma$ from $V(G_1)$ to $V(G_2)$ with $\sigma(o_1)=o_2$ and such that $T_{\sigma(v)}=T_v$ for every $v \in V(G_1)$, where for any vertex $v$ either in $G_1$ or $G_2$ by $T_v\in S$ we denote its type. In the case of existence of such an isomorphism, we write 
\begin{align*}
    (G_1,o_1)\simeq_S (G_2,o_2)
\end{align*}
to indicate this. For any rooted graph $(G,o)$, we define the \emph{ball of radius $r$} about $o$ as
\begin{align*}
    B_{G}(o,r):=\{w\in V(G):d_{G}(w,o)<r\},
\end{align*}
$d_G(\cdot,\cdot)$ denoting the \emph{graph distance}\footnote{Recall, the graph distance between two vertices in any graph is the length of the shortest length path between them, interpreted as equal to infinity if there is no such path (equivalently, if the two vertices lie in different connected components).} in $G$.
Our main result on the local limit of $(G_n)_{n \geq 1}$ is as follows.
\begin{theorem}[Local limit of simple uniform multipartite graphs]\label{thm:loc_lim}
    Consider the graph $G_n=(V_n,E_n)$ as a graph with types from $S=[k]$, with a vertex $v$ having type $i$ if $v \in \cN^{(i)}$, and let $o_n$ be a randomly sampled vertex from $V_n$, independent of $G_n$. Then under Assumptions \ref{assump:part_i_size}, \ref{assump:degree_regularity}, \ref{assump:offspring_degree_reg} and \ref{assump:one_good_deg_greater_2}, the rooted graph with types $(G_n,o_n)$ converges \emph{locally in probability} to the infinite rooted tree with types $(\cT_\infty,\varnothing)$, i.e., the following convergence in probability holds: for any finite rooted tree $(\bt,o)$ with vertex types from $[k]$ and for any $r\geq 1$,
    \begin{align*}
    \frac{1}{n}\sum_{v \in V}\ind{B_{G_n}(v,r)\simeq_S (\bt,o)}\plim \Prob{B_{\cT_\infty}(\varnothing,r)\simeq_S (\bt,o)}. \numberthis \label{eq:loc_conv_claim}
\end{align*}
\end{theorem}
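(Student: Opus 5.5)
The plan is the standard configuration-model route: prove local convergence for a multipartite configuration model, then transfer it to the uniform simple graph $G_n$ by conditioning on simplicity. Let $\mathrm{CM}_n(\bD(n))$ be the random multigraph built as follows. Every $x\in\cN^{(i)}$ receives $d^{(i,j)}_x$ half-edges labelled $(i,j)$. Within each $\cN^{(i)}$, the $(i,i)$ half-edges are paired by a uniform perfect matching. For each $i<j$, the $(i,j)$ half-edges are matched to the $(j,i)$ half-edges by a uniform bijection. These $k+\binom{k}{2}$ matchings are independent.

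Conditionally on being simple, $\mathrm{CM}_n(\bD(n))$ is uniform on $\cG(\bD(n))$, since every simple graph has the same number $\prod_{i,j,x} d^{(i,j)}_x!$ of preimages. By Assumption \ref{assump:degree_regularity} (second moments converge), each of the independent matchings is simple with probability bounded away from zero. For within-part pairings and for bipartite pairings this is classical; see \cite[Chapter 7]{van2016random} and \cite{angel2019limit}. For BAD pairs the number of half-edges is $o(n)$ with bounded second moment, and the simplicity probability still stays bounded below. Hence $\liminf_n \Prob{\mathrm{CM}_n \text{ simple}}>0$.

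Local convergence in probability is a statement that a certain random variable concentrates on a constant. Such concentration transfers to any conditioning on an event of probability bounded away from zero. So it suffices to prove \eqref{eq:loc_conv_claim} for $\mathrm{CM}_n$.

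For $\mathrm{CM}_n$ I use the second-moment method. First, the expectation: let $o_n$ be uniform and explore $B(o_n,r)$ breadth-first, revealing half-edge pairings one at a time. The root's type has law $T_{n,\varnothing}$, which converges to $T_\varnothing$ by Assumption \ref{assump:part_i_size}, and its degree vector converges to $F^{(T_\varnothing)}$ by \eqref{eq:empirical_limit}. Now consider a half-edge of type $(i,j)$ out of a type-$i$ vertex. It pairs with a uniform $(j,i)$ half-edge, so its endpoint has law $W_n(i,j)$. That endpoint's remaining degrees are $d^{(j,l)}$ for $l\ne i$ and $d^{(j,i)}-1$, whose laws are exactly $D_{n,(i,j,l)}$ and converge by Assumption \ref{assump:offspring_degree_reg}. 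For $l=i$, Remark \ref{rem:assump} identifies the limit with $\underline{D_{(j,i)}}$.

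The exploration of a ball about a bounded tree uses only $O(1)$ half-edges. So, once the total number of half-edges of the relevant type is linear, sampling without replacement differs from the limiting law by $o(1)$ in total variation. The probability of pairing with an already-seen half-edge (which would create a cycle) is $O(1/n)$ times the number of explored half-edges. Uniform integrability of the size-biased laws, which is the first-moment convergence in Assumption \ref{assump:offspring_degree_reg}, keeps the explored ball tight. A BAD pair $(i,j)$ has $o(n)$ half-edges, but the root reaches one with probability $o(1)$. By the first-moment assumption, a size-biased vertex of type $j$ reached through a GOOD pair also has a vanishing expected number of half-edges of BAD type, so these never appear in the limit, matching $\cT_\infty$. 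This gives $\Exp{\frac1n\sum_v \ind{B(v,r)\simeq_S(\bt,o)}}\to \Prob{B_{\cT_\infty}(\varnothing,r)\simeq_S(\bt,o)}$.

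For the variance, I take two independent uniform roots and explore both balls jointly. The two explorations intersect with probability $o(1)$, by the same collision estimate together with tightness of the ball sizes. Conditionally on not intersecting, they are asymptotically independent, because each consumes only $O(1)$ half-edges from linear pools. The second moment therefore factorizes, and Chebyshev's inequality finishes the proof.

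I expect the main obstacle to be the coupling step: the heterogeneous pools of half-edges must be controlled uniformly. The GOOD pairs whose $\Prob{D_{(i,j)}>0}$ is small but positive need care, as do BAD pairs with sublinear pools. Throughout, only first-moment uniform integrability of the size-biased laws is available to keep ball sizes tight. My approach would be to first truncate degrees at a large constant $K$, using Assumption \ref{assump:offspring_degree_reg}. I would then perform the coupling with bounded degrees and send $K\to\infty$, with an error controlled by $\Prob{D_{n,(i,j,l)}>K}$ uniformly in $n$.
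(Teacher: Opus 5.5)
Your overall strategy matches the paper's. There is, however, one genuine gap: the treatment of BAD pairs in the reduction to a full configuration model. There is also a problem with identifying the limit as $\cT_\infty$, and this one is shared with the paper.

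\textbf{The BAD-pair simplicity step fails as justified.} You claim that a BAD-pair matching is simple with probability bounded below because it has $o(n)$ half-edges with bounded second moment. The normalisation is the wrong one. Simplicity of a configuration model on a pool of $\ell$ half-edges is governed by $\sum_x d_x(d_x-1)/\ell$. Assumption \ref{assump:degree_regularity} only controls $\frac{1}{N^{(i)}}\sum_x d_x^2$, which says nothing about that ratio once $\ell=o(n)$.

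A concrete example: give $m=\lfloor n^{1/5}\rfloor$ vertices of $\cN^{(1)}$ and $m$ vertices of $\cN^{(2)}$ cross-degree $m$, and all other vertices cross-degree $0$.
\begin{itemize}
\item This is bigraphic; its only realization is $K_{m,m}$.
\item The pair $(1,2)$ is BAD, with $\frac{1}{N^{(1)}}\sum_x (d^{(1,2)}_x)^2\to 0$.
\item Yet the bipartite matching is simple with probability $(m!)^{2m}/(m^2)!\to 0$.
\end{itemize}
So the event you condition on in the transfer step has vanishing probability.

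Rescuing your route would need $\sum_x d_x(d_x-1)=O(\ell)$ on both sides of every BAD pool. You could only get this by reading Assumption \ref{assump:offspring_degree_reg} as also covering BAD pairs; the example above violates that reading at the triple $(2,1,2)$. You would also need a simplicity criterion valid for sublinear pools, such as Janson's.

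\textbf{How the paper avoids this.} By Remark \ref{rem:uniform_induce}, the restrictions of $G_n$ to the different pairs are independent uniform simple graphs. Corollary \ref{cor:sample_Gn} therefore replaces only the GOOD pairs by configuration models, whose simplicity probabilities are bounded below by Theorem \ref{thm:simplicity_CM}. The BAD pairs are kept as uniform simple graphs. The exploration with high probability never reaches a vertex carrying a BAD half-edge (Lemma \ref{lem:sn_small}), so the law of those subgraphs never enters.

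With that change, the rest of your plan is essentially the paper's proof:
\begin{itemize}
\item pairing-as-you-explore, coupled to a branching process;
\item a collision bound of order (number of steps)$^2\Delta_n/n$, with $\Delta_n=o(\sqrt n)$;
\item bounded expected generation sizes;
\item several roots explored one after another, followed by Chebyshev.
\end{itemize}
Your degree truncation is not needed.

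\textbf{The final identification of the limit.} Your exploration correctly yields, at the root, the joint law $F^{(T_\varnothing)}$ of the degree vector. At a vertex reached along an $(i,j)$ half-edge it yields the joint law of $(d^{(j,l)}_{W_n(i,j)})_{l\in[k]}$, with the $i$-th coordinate lowered by one. $\cT_\infty$ as defined has the same marginals but declares the type counts independent. The two agree when every $F^{(i)}$ is a product measure, but not in general.

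Take $k=2$ and $N^{(1)}=N^{(2)}=n/2$, with $n$ divisible by $8$. In each part, let half of the vertices have $(d^{(i,1)}_x,d^{(i,2)}_x)=(3,3)$ and half $(0,0)$. All assumptions hold. No type-$1$ vertex has exactly three type-$1$ neighbours and no type-$2$ neighbour. Yet $\cT_\infty$ gives this radius-$2$ ball probability $\frac18$.

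So your argument actually proves convergence to the branching process with these joint offspring laws. Reaching $\cT_\infty$ as stated needs the product hypothesis. The paper's proof makes the same identification when it asserts that the coupled process is $\BP(\bp_n,\bD_n,\tilde{\bD}_n)$, so this is a defect of the statement rather than of your method.
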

The local limit was also discovered albeit under certain extra assumptions by the authors of \cite{gamarnik2015giant}, although the authors do not phrase it that way.

\begin{remark}[Metrizing the convergence]
    In fact the last convergence result can be metrized by an appropriate \emph{local} distance metric, for details on this, we refer the reader to \cite[Chapter 2]{van2024random}. Let us comment that the mentioned reference works with graphs without types, but the basic principles work through for graphs with types in a straightforward manner, at least when one has finitely many types, as in our setting.
\end{remark}

\subsection{Survival of the local limit and giant components}
In this section we phrase our main result on the survival analysis of the local limit, and a giant component phase transition result for $(G_n)_{n \geq 1}$. As mentioned before, the analysis of the size of the giant component of this model was first undertaken in the paper \cite{gamarnik2015giant}, where the authors prove a phase transition under certain assumptions. A more detailed comparison between the results of \cite{gamarnik2015giant} and our results is discussed after the statements of our results in this direction.

\subsubsection{Survival of the local limit}
Consider a general branching process, of which the local limit of Theorem \ref{thm:loc_lim} is a particular example, defined as follows. Let $S$ be a finite type space. Let $\bp:=(p_s)_{s \in S}$ be a probability vector. For each $i,j \in S$, let $D_{(i,j)}$ be an integer valued random variable, and we denote by $\bD$ the random matrix whose $(i,j)$-th entry is $D_{(i,j)}$. For each $i,j, k \in S$, let ${D}_{(i,j,k)}$ be another collection of integer valued random variables, and we denote by $\Tilde{\bD}$ the array (or tensor) $(D_{(i,j,k)})_{i,j,k \in S}$.

\begin{definition}[Multi-type branching process corresponding to $\bp$, $\bD$ and $\tilde \bD$]\label{def:BP}
The multi-type branching process corresponding to $\bp$, $\bD$ and $\Tilde{\bD}$ is denoted as $\mathrm{BP}(\bp, \bD, \Tilde{\bD})$, and is defined as the following branching process tree:
\begin{itemize}
    \item The root $\varnothing$ has a random type $s$ with probability $p_s$.
    \item Conditionally on its type being $s$, the root $\varnothing$ produces $D_{(s,i)}$ many offspring of type $i$ for each $i \in S$, and these numbers across different $i$ are (conditionally) independent.
    \item For any other vertex $v \neq \varnothing$, given its type being $T_v$ and the type of its parent being $T_{p(v)}$, it produces $D_{(T_{p(v)},T_v,i)}$ many vertices of type $i$, and these numbers across different $i$ are (conditionally) independent.
\end{itemize}
\end{definition}
To state the main result on the survival probability of the above process, we need to introduce a few concepts. Recall a digraph is a graph with directed edges.

\begin{definition}[Type digraph]\label{def:type graph}
    Consider a digraph $G_S$, with vertex set $V(G_S)$ consisting of all \emph{type triplets}, $V(G_S):=\{(u,v,w):u,v,w \in S\}$ and directed edges go from vertices of the form $(\cdot,v,w)$ to $(v,w,\cdot)$. Denote the edge set of this graph by $E(G_S)$. Thus $E(G_S)$ consists of ordered pairs $(u_0,u_1)$ for $u_0,u_1 \in V(G_S)$. We call $G_S=(V(G_S),E(G_S))$ the \textit{type digraph} corresponding to the type set $S$. 
\end{definition}

\begin{remark}
    Note that $|V(G_S)|=|S|^3$.
\end{remark}

Consider the above branching process $\BP(\bp,\bD,\Tilde{\bD})$ as defined in Definition \ref{def:BP} with vertices having types from the set $S$. For any vertex $u=(s_0,s_1,s_2) \in G_S$, we (naturally) denote by $D_u$ the random variable $D_{(s_0,s_1,s_2)}$, that is the $(s_0,s_1,s_2)$-th entry of the tensor $\Tilde{\bD}$. For any set of types $S$ and corresponding type digraph $G_S$, for any edge $e=(u,v)\in E(G_S)$ we define the \emph{weight} of $e$ to be 
\begin{align*}
    w(e):=\ind{\Exp{D_u}\Exp{D_v}>0}.
\end{align*}
Thus, note that if both $(u,v)$ and $(v,u)$ are directed edges in $G_S$, then the weights of these edges are the same. Denote by $D(G_S)$ the subgraph of $G_S$ on the same vertex set $V(G_S)$ but with edge set $\{e\in E(G_S):w(e)=1\}$, and for any $v \in G_S$, denote by $\cC_D(v)$ the set of vertices that can be reached using a directed path from $v$ in $D(G_S)$.

For fixed $s_0,s_1 \in S$, define the vertex subset $$V(s_0,s_1):=\{v\in V(G_S):v=(s_0,s_1,s)\textrm{ for some }s \in S\}\subseteq V(G_S).$$ Finally, for any vertex $v \in V(G_S)$, denote by $\cL(v)$ the collection of all \textit{simple cycles} in $D(G_S)$ that starts and ends at $v$. Recall a simple cycle that starts and ends at $v$ in $D(G_S)$ is a directed path in $D(G_S)$ of the form $(v_0,v_1,\dots, v_k)$, where $v_0=v_k=v$, with $v_i\neq v$ for all $1\leq i \leq k-1$, and where all of $v_1,\dots, v_{k-1}$ are distinct. For any $c=(v_0,v_1,\dots,v_k) \in \cL(v)$ with $v_0=v_k=v$, define the number
\begin{align*}
    \sM(c):=\prod_{i=0}^{k-1}\Exp{D_{v_i}}. \numberthis \label{eq:loop_product}
\end{align*}
Let us denote for any $i,j,k \in S$ and for $m =0,1,2,\dots$
\begin{align*}
    \Prob{D_{(i,j)}=m}=p^{(i,j)}_m\;\;\text{and}\;\;\Prob{D_{(i,j,k)}=m}=p^{(i,j,k)}_m,
\end{align*}
and recall that the type of the root has the probability mass vector $(p_s)_{s\in S}$ on $S$.

\begin{theorem}[Extinction of $\BP(\bp,\bD,\Tilde{\bD})$]\label{thm:extinct_1dep}
    Consider the multi-type branching process $\BP(\bp,\bD,\Tilde{\bD})$ from Definition \ref{def:BP}, and denote its extinction probability by $\eta$. With the convention that the maximum over an empty set is $0$,  we have $\eta=1$ if and only if
    \begin{align*}
        \max_{\{s\in S:p_s>0\}}\max_{\{s'\in S:p^{(s,s')}_0<1\}} \max_{v \in V(s,s')}\max_{u \in \cC_D(v)}\max_{c \in \cL(u)}\sM(c)\leq 1. \numberthis \label{eq:ext_BP_condition}
    \end{align*}
\end{theorem}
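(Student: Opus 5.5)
The plan is to reduce $\BP(\bp,\bD,\Tilde{\bD})$ to an ordinary, possibly reducible, multi-type Galton--Watson process, and then to translate the classical spectral survival criterion into the cycle condition \eqref{eq:ext_BP_condition}.

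\emph{Step 1: reduction.} Mark every non-root vertex $v$ by the ordered pair $(T_{p(v)},T_v)\in S^2$. A vertex with mark $(a,b)$ has, independently of everything else, $D_{(a,b,c)}$ children with mark $(b,c)$ for each $c$. So the non-root generations form a multi-type Galton--Watson process $Z$ on $S^2$ with mean matrix
\[
M_{(a,b),(b,c)}=\Exp{D_{(a,b,c)}},
\]
and all other entries zero. The root of type $s$ starts independent copies of $Z$ from marks $(s,s')$, and such a copy exists with positive probability exactly when $p^{(s,s')}_0<1$. Hence $\eta<1$ if and only if, for some $s$ with $p_s>0$ and some $s'$ with $p^{(s,s')}_0<1$, the process $Z$ started from $(s,s')$ survives with positive probability. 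This matches the two outer maxima in \eqref{eq:ext_BP_condition}.

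\emph{Step 2: survival of $Z$.} Use the standard theory of reducible multi-type Galton--Watson processes (Harris; Sevast'yanov). Decompose $S^2$ into communicating classes of the support digraph of $M$, i.e.\ put an edge $(a,b)\to(b,c)$ when $\Exp{D_{(a,b,c)}}>0$. The process started from $(s,s')$ survives with positive probability if and only if some class $\cC$ reachable from $(s,s')$ has either $\rho(M_{\cC})>1$, or $\rho(M_{\cC})=1$ with $\cC$ singular (every member has a.s.\ exactly one child in $\cC$). The singular case must be excluded or absorbed separately; I expect this to need the non-degeneracy that is present in the application (cf.\ Assumption \ref{assump:one_good_deg_greater_2}), or a short separate argument.

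\emph{Step 3: the triplet digraph as a line digraph.} Identify the type digraph $D(G_S)$ with the line digraph of the pair-support digraph. A triplet $(a,b,c)$ with $\Exp{D_{(a,b,c)}}>0$ is a positive edge $(a,b)\to(b,c)$. An edge $(a,b,c)\to(b,c,d)$ of weight one is a length-two positive walk. The condition $v\in V(s,s')$ with $u\in\cC_D(v)$ says exactly that the positive edge $u$ lies downstream of the mark $(s,s')$. A simple cycle $c\in\cL(u)$ in the line digraph is a closed walk that uses each positive edge at most once, and $\sM(c)$ is the product of the entries of $M$ along it. It therefore remains to show, for each reachable class $\cC$,
\[
\rho(M_{\cC})>1\iff \text{some closed walk in }\cC\text{ using no edge twice has weight product }>1.
\]

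\emph{Step 4: the two directions, and the main obstacle.} The direction ($\Leftarrow$) is routine. If a closed walk of length $m$ has product $>1$, then $(M^m)_{xx}>1$ for the base point $x$, so $\rho(M)^m>1$. The direction ($\Rightarrow$) is where I expect the difficulty. For general nonnegative matrices the spectral radius can exceed $1$ while every edge-simple circuit has product at most $1$: two loops meeting a $2$-cycle of product $1$ already do this. So the proof must use the special de~Bruijn-type structure of $M$ (a loop at $(a,b)$ forces $a=b$), and possibly integrality of the offspring counts.

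My first attempt would be as follows. Write $(M_{\cC}^m)_{xx}$ as a sum over closed walks. Decompose each walk into edge-simple circuits, using the line-digraph correspondence so that "edge-simple" matches "simple in $D(G_S)$". Then argue that if every such circuit has product $\le 1$, the number of circuit decompositions grows subexponentially in $m$ within a class, which would give $\rho\le 1$. If this counting fails, I would instead look for an extra structural constraint, such as the symmetry $w(u,v)=w(v,u)$ already noted in the definition of the weights, to make the count work. I expect this counting argument, together with the critical singular case from Step 2, to be the heart of the proof.
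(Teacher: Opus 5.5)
\textbf{There is a genuine gap, and it is in the statement rather than only in your route.} Your Steps 1--3 are sound. The pair process $Z$ with mean matrix $M_{(a,b),(b,c)}=\Exp{D_{(a,b,c)}}$ is correct. So is the classical reducible criterion: some class reachable from an admissible $(s,s')$ has $\rho(M_{\mathcal C})>1$ or is singular. So is the identification of simple cycles of $D(G_S)$ with edge-simple closed walks of the pair digraph. Your ($\Leftarrow$) direction matches the paper's first half, which follows the cycle to embed a BGW tree with mean $\sM(c)$.

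\textbf{Step 4 ($\Rightarrow$) is false.} No counting argument can close it, because it fails even for de Bruijn-structured $M$ with integer offspring. Take $S=\{1,2\}$ and $p_1=p_2=\tfrac12$, and let all $D_{(s,s')}$ and $D_{(a,b,c)}$ be independent $\mathrm{Ber}(3/4)$.
\begin{itemize}
\item Every vertex, including the root, has a $\mathrm{Bin}(2,3/4)$ number of children regardless of types. So the tree is a BGW tree with mean $3/2$, and $\eta=1/9<1$. Equivalently, $M$ is $\tfrac34$ times the adjacency matrix of the binary de Bruijn graph on $S^2$, so $\rho(M)=3/2$.
\item Yet $D(G_S)=G_S$, and every simple cycle of length $k$ has $\sM(c)=(3/4)^k\le 3/4$. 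So \eqref{eq:ext_BP_condition} holds, and the theorem predicts $\eta=1$.
\end{itemize}
The obstruction is entropic. There are $2^m$ closed walks of length $m$, each of weight $(3/4)^m$, and a maximum over cycles cannot see this. In general $\max_c \sM(c)^{1/k}$ is only a lower bound for $\rho$, so your proposed subexponential bound on circuit decompositions cannot hold. Your worry about the singular case is also justified. Take $S=\{1\}$ with $D_{(1,1)}\equiv D_{(1,1,1)}\equiv 1$. The tree is an infinite path, so $\eta=0$, while $\sM=1$ and the condition holds.

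\textbf{Where the paper's proof fails.} Only the implication ``\eqref{eq:ext_BP_condition} fails $\Rightarrow \eta<1$'' is true. The paper's argument for the converse breaks at the reduction that it suffices to show $\T(\bv;\pi)$ dies out for every fixed deterministic type ray $\pi$. Survival of $\T$ gives an infinite line of descent whose type sequence is random, and there are uncountably many rays. In the example above, each $\T(\bv;\pi)$ is a subcritical BGW tree with $\mathrm{Ber}(3/4)$ offspring and dies a.s., although $\T$ survives. The paper's concluding step, ``$\BP_{\by,\pi'}$ survives, hence $\sM(c)>1$'', also overlooks the singular case.

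\textbf{What you can actually prove.} The correct characterization is the spectral one your Steps 1--2 already deliver. Stop there, and present the cycle condition only as a sufficient condition for survival.
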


%
\paragraph{Relation with standard multi-type branching processes.}
The process $\BP(\bp,\bD,\Tilde{\bD})$ can be seen as a \emph{standard} multi-type branching process on the type space $S^2:=\{(i,j):i,j\in S\}$ by letting a vertex $v$ have type $(T_{p(v)},T_v)$, and letting it produce $D_{(T_{p(v)},T_v,j)}$ many children of type $(T_v,j)$ independently for each $j$. Thus, to analyze its extinction, one can apply classical spectral techniques. Indeed, under the two extra assumptions of Theorem \ref{thm:gamarnik_giant} below, this was the perspective adopted in \cite{gamarnik2015giant}. However note that, crucially, we don't assume anything related to \emph{irreducibility} in Theorem \ref{thm:extinct_1dep}. In particular, the associated mean offspring matrix may be reducible, in which case the classical results that come via Perron-Frobenius theory, see for example \cite[Chapter V]{athreya2012branching}, cannot be \emph{directly} applied, and one has to peer deep into the irreducible components of the matrix. 

In the end, we want to couple graph exploration with the branching process local limit, and having reducibility, together with the multipartite nature of the problem, makes the development of this coupling quite complicated if one wants to approach via spectral analysis as was done in \cite{gamarnik2015giant}. These complications led us to develop Theorem \ref{thm:extinct_1dep}, which, as we shall see, gives us a natural hint about where might the largest components of the graph $G_n$ be located, and thus aids us in graph exploration, together with revealing information regarding the \emph{typical distance} between two random vertices.

Of independent interest, Theorem \ref{thm:extinct_1dep} gives a natural simpler condition for the extinction of standard multi-type of branching processes; we discuss this interesting result in Section \ref{sec:disc_ext}.

\subsubsection{Giant components}

Let us now state our giant component result. Recall the random graph sequence $(G_n)_{n \geq 1}$, and its local limit $(\cT_\infty,\varnothing)$ from Theorem \ref{thm:loc_lim}. Denote the component of any vertex $v \in V_n$ in the graph $G_n$ by $\cC(v)$ and the $i$-th largest such component by $\cC^{(i)}_n$, $i \geq 1$. For the next theorem, we naturally represent our local limit $(\cT_\infty,\varnothing)$ in the notation $\BP(\bp,\bD,\Tilde{\bD})$ with obvious meanings for $\bp,\bD,\Tilde{\bD}$:
\begin{itemize}
    \item $\bp=(p_s)_{s\in S}$ with $p_s=\gamma_s$ for all $s\in S=[k]$.
    \item The entries of $\bD$ are $D_{(i,j)}$, the variables from Assumption \ref{assump:degree_regularity}.
    \item When $l=i$, the $(i,j,l)$-th entry of $\Tilde{\bD}$ is $\underline{D_{(j,i)}}$, where recall the shifted size-biased variable $\underline{X}$ from \eqref{eq:shifted_sb}.
    \item When $l\neq i$, the $(i,j,l)$-th entry of $\Tilde{\bD}$ is $D_{(i,j,l)}$, the variable from Assumption \ref{assump:offspring_degree_reg}. 
\end{itemize}

\begin{theorem}[Giants in uniform multipartite graphs with given degrees] \label{thm:giants_MCMs}
   Under the assumptions of Theorem \ref{thm:loc_lim}, consider the branching process $(\cT_\infty,\varnothing)$ that arises as the local limit of the graph sequence $(G_n)_{n \geq 1}$, and let $\eta$ denote its extinction probability.     
    \begin{itemize}
    \item [1.]\textbf{Supercritical regime.} Suppose condition \eqref{eq:ext_BP_condition} does not hold, so that $\eta<1$. Then
        \begin{align*}
        \liminf_{n \to \infty}\frac{\Exp{|\cC^{(1)}_n|}}{n}>0.
            \numberthis \label{eq:supcrit_conclusion}
        \end{align*}
    
        \item [2.]\textbf{Subcritical regime.} If condition \eqref{eq:ext_BP_condition} holds, so that $\eta=1$, then as $n \to \infty$ we have $\frac{|\cC^{(1)}_n|}{n}\plim 0.$
        \end{itemize}
\end{theorem}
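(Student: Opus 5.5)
Both regimes rest on the local limit, Theorem \ref{thm:loc_lim}, together with the extinction criterion, Theorem \ref{thm:extinct_1dep}.

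\textbf{Subcritical regime.} This part is the standard ``local limit implies no giant'' argument. For fixed $r\geq 1$, let $Z_{\geq r}$ be the number of vertices $v$ with $|\cC(v)|\geq r$. By \eqref{eq:loc_conv_claim}, summed over the finitely many types of finite trees of depth below $r$,
\begin{align*}
\frac{Z_{\geq r}}{n}\plim \Prob{|\cT_\infty|\geq r}.
\end{align*}
Under $\eta=1$ the right-hand side tends to $0$ as $r\to\infty$. Now $|\cC^{(1)}_n|\leq r+Z_{\geq r}$, since if $|\cC^{(1)}_n|\geq r$ then every vertex of $\cC^{(1)}_n$ is counted in $Z_{\geq r}$. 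Letting first $n\to\infty$ and then $r\to\infty$ gives $|\cC^{(1)}_n|/n\plim 0$.

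\textbf{Supercritical regime.} Here I would use condition \eqref{eq:ext_BP_condition} to locate a growing substructure. Failure of \eqref{eq:ext_BP_condition} gives a reachable vertex $u$ of $D(G_S)$ and a simple cycle $c\in\cL(u)$ with $\sM(c)>1$. Going around $c$ is a single-type Galton--Watson process in the ``triplet'' type space with mean $\sM(c)>1$; I would restrict to it by keeping only children whose type continues along $c$. This gives a sub-branching-process that is supercritical and reached with positive probability from a root of positive-mass type $s$. This is presumably how Theorem \ref{thm:extinct_1dep} is proved in any case.

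In $G_n$ I would explore via the configuration model, sequentially pairing half-edges, with the exploration restricted to half-edge classes of the cycle types. The route to the cycle uses finitely many prescribed steps along $\cC_D(v)$, each with positive probability. Up to $\eps n$ explored half-edges, the depletion of each relevant half-edge class $(i,j)$ changes the empirical offspring means $\Exp{D_{n,(i,j,l)}}$ only by $O(\eps)$. This uses the uniform integrability from Assumptions \ref{assump:degree_regularity} and \ref{assump:offspring_degree_reg}: removing $\eps n$ half-edges can remove only a small fraction of $\sum_x d^{(j,i)}_x d^{(j,l)}_x$, by the second-moment bound together with Cauchy--Schwarz. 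Choosing $\eps$ small keeps the cycle product above $1+\delta$.

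A standard drift argument then applies to the number of active half-edges along the cycle, viewed through a multi-step ($|c|$-step) embedded walk whose mean increment is at least $\delta$. Starting from a vertex whose neighbourhood reaches the cycle, the exploration fails to die before $\eps n$ steps only with probability bounded below, giving $\Prob{|\cC(o_n)|\geq \eps' n}\geq c>0$. Hence $\Exp{|\cC^{(1)}_n|}\geq \Exp{|\cC(o_n)|\ind{|\cC(o_n)|\geq\eps' n}}\geq c\,\eps' n$.

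The result is transferred from the configuration multigraph to the uniform simple graph by conditioning on simplicity. By Assumption \ref{assump:degree_regularity} (finite second moments), simplicity of each block has probability bounded away from zero, and the blocks are independent by Remark \ref{rem:uniform_induce}. So an event of probability bounded below in the multigraph keeps probability bounded below after conditioning only if its probability is close to $1$, or if it is handled in expectation. I would therefore instead prove the event ``at least $cn$ vertices lie in components of size $\geq \eps' n$'' with probability $\to 1$. This follows from a second-moment (two-vertex) version of the exploration: two independent roots explored to depth $\eps n$ behave asymptotically independently. That event survives conditioning and gives \eqref{eq:supcrit_conclusion}.

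The main obstacle is the supercritical exploration. The branching process may be reducible, and growth occurs only along one cycle of the triplet digraph, so the drift must be controlled for a non-Markov (parent-type dependent) multi-type exploration while half-edges are depleted across several partitions simultaneously. The uniform-integrability estimate for the depleted tensor means is the delicate point.
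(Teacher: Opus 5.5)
Your subcritical argument is the paper's. For the supercritical part you take a genuinely different route.

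The paper never goes beyond order $\sqrt n$. It grows restricted explorations of $\G_n(c)$ from two roots $o_1,o_2\in\cN^{(s_0)}$, guided by $c$ and by its reversal $\overleftarrow{c}$. Each is coupled with the tree $\cT_n$, and generation sizes are controlled through the embedded single-type process $\cY_n$ and Lemma \ref{lem:seq_grow}. The two clusters, each carrying $g(\eps)\sqrt n$ free half-edges of the compatible classes $H_{s_0\rightarrow s_1}$ and $H_{s_1\rightarrow s_0}$, are then joined by a birthday-type pairing bound. This gives $\liminf_n\Prob{o_1\leftrightarrow o_2}>0$, hence \eqref{eq:supcrit_conclusion}, with only a trivial $o(n)$ depletion. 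The logarithmic heights of the two trees also give Theorem \ref{thm:dist}(2.) as a by-product.

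Your single linear-size exploration avoids the second root and the collision step. In exchange, it must control depletion and a vector-valued drift. Your uniform-integrability estimate handles depletion, since all pairs on $c$ are GOOD by Lemma \ref{lem:good_pairs_c}. The drift needs more care if your embedded walk is indexed by exploration steps. Then $|c|$ consecutive pairings do not go around the cycle, and the raw count of active half-edges drifts downward while a generation with $\Exp{D_{u_i}}<1$ is processed. You need either a generation-based embedded process like $\cY_n$, or a potential $\sum_i w_iA^{(i)}_t$. Here $A^{(i)}_t$ counts active half-edges at cycle position $i$ and $w_{i+1}\Exp{D_{u_i}}=\sM(c)^{1/l}w_i$.

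The genuine gap is the transfer to simple graphs. You are right that a lower bound on $\Prob{|\cC(o_n)|\geq\eps' n}$ in the multigraph does not by itself survive conditioning on simplicity. The paper's reduction to $\G_n$ via Corollary \ref{cor:sample_Gn} passes over exactly this point, since that corollary only transfers events of vanishing probability.

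Your repair, however, rests on a false claim: two explorations of size $\eps n$ from independent roots are not asymptotically independent. Once the first has survived, the second lands on one of its vertices at each step with probability of order $\eps$. If both survive, they merge with probability tending to one. Only explorations of size $o(\sqrt n)$ decouple, which is essentially what the local-limit argument already gives. To reach a whp statement you would still need a uniqueness input: two clusters surviving the early phase become connected with probability $1-o(1)$. That requires, for example, growing both to $n^{1/2+\delta}$ free compatible half-edges; the paper's $g(\eps)\sqrt n$ gives only the constant $1-\exp(-g(\eps)^2/\Exp{D_{(s_0,s_1)}})$.

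A simpler repair avoids whp statements altogether. Condition on the exploration history. The unpaired half-edges then form a configuration model on the residual degrees, which satisfy the assumptions up to $o_\eps(1)$. Uniformly in the history, this residual model is simple and adds no edge parallel to an explored one with probability bounded below. Meanwhile, under bounded degrees, the explored part is itself simple with probability $1-O(\eps)$. Together these bound $\Prob{|\cC(o_n)|\geq\eps' n,\ \G_n\textrm{ simple}}$ from below, which is what \eqref{eq:supcrit_conclusion} needs.
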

For any graph sequence $\mathbf{G}_n=(V(\mathbf{G}_n),E(\mathbf{G}_n))$ let us call a connected component $\cC$ of $\mathbf{G}_n$ a \emph{giant component} if it satisfies $\liminf_{n \to \infty}\frac{\Exp{|\cC|}}{|V(\mathbf{G}_n)|}>0$\footnote{In the literature, typically the term \emph{giant component} refers to the largest component. However, here we are using this terminology for any component that has linear (in the number of vertices) size.}. Thus, the last result says that when the local limit survives, there is at least one giant component in the sequence $(G_n)_{n \geq 1}$, while when it dies out, there are no such.

\paragraph{Relation and comparison with results of \cite{gamarnik2015giant}.}
As mentioned before, the problem of understanding the giant component in the sequence $(G_n)_{n \geq 1}$ was first undertaken in \cite{gamarnik2015giant}. The main results of this work translated into our language are summarized in the following theorem. 
\begin{theorem}[Main results of the work \cite{gamarnik2015giant} by Gamarnik and Misra]\label{thm:gamarnik_giant}
    Under the assumptions of Theorem \ref{thm:giants_MCMs}, assume further:
    \begin{itemize}
        \item If a pair $(i,j)$ is BAD, then $\bd_{(i,j)}(n)$ and $\bd_{(j,i)}(n)$ are null sequences for all $n \geq 1$, i.e., they only have zero entries.
        \item  Considering the natural offspring matrix $M$ on $S^2 \times S^2$ where the $((i_1,j_1),(i_2,j_2))$-th entry is the number $\Exp{D_{(i_1,j_1,j_2)}}\ind{j_1=i_2}$, there exists a $p\geq 1$ such that all entries of $M^p$ are strictly positive.
    \end{itemize}
    Then if $\eta<1$, $\frac{|\cC^{(1)}_n|}{n}\plim 1-\eta$, while if $\eta=1$, then $\frac{|\cC^{(1)}_n|}{n}\plim 0$. Additionally, if $\eta<1$, the second largest component $\cC^{(2)}_n$ satisfies $|\cC^{(2)}_n|=O(\log n)$.
\end{theorem}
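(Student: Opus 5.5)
The statement is a translation of the main results of Gamarnik and Misra into our notation. I would prove it by combining the general results already established, Theorems \ref{thm:loc_lim} and \ref{thm:extinct_1dep}, with the two extra hypotheses. The key point is that the second hypothesis (primitivity of $M$ on $S^2$) makes the local limit, viewed as a standard multi-type branching process on pairs $(T_{p(v)},T_v)$, positively regular. The first hypothesis guarantees that no vertex has edges of a BAD pair type, so the graph has no sublinear ``bridges'' between partitions that the limit does not see.

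\textbf{Subcritical case.} This is exactly part 2 of Theorem \ref{thm:giants_MCMs}, so nothing new is needed.

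\textbf{Supercritical case.} The plan has three steps.

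\emph{Step 1: concentration of the number of vertices in large components.} Let $Z_{\geq K}=\sum_{v}\ind{|\cC(v)|\geq K}$. Local convergence in probability (Theorem \ref{thm:loc_lim}) gives
\[
Z_{\geq K}/n\plim \Prob{|\cT_\infty|\geq K},
\]
and this tends to $1-\eta$ as $K\to\infty$. This gives the upper bound $|\cC^{(1)}_n|\leq (1-\eta+o(1))n$ whp, once we show $Z_{\geq K}$ is eventually dominated by one component.

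\emph{Step 2: ``large'' clusters merge.} I would use the standard sprinkling/two-vertex argument of van der Hofstad \cite[Chapter 4]{van2024random}. The aim is to show that for two independent uniform vertices $u,v$, whp $|\cC(u)|,|\cC(v)|\geq K$ implies $u\leftrightarrow v$, i.e.\ to show
\[
\frac{1}{n^2}\#\{(u,v):|\cC(u)|,|\cC(v)|\geq K,\ u\not\leftrightarrow v\}\plim 0.
\]
To do this, explore from $u$ and $v$ in the configuration model on each block (using Remark \ref{rem:uniform_induce} and the standard transfer from multigraph to simple graph under the second-moment Assumption \ref{assump:degree_regularity}). Primitivity of $M$ implies that the surviving multi-type process grows in every pair-type $(i,j)$ at the Perron rate $\rho(M)^r>1$. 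Consequently, after $r=\tfrac12\log_{\rho} n+o(\log n)$ generations, both explorations have $n^{1/2+\eps}$ unpaired half-edges of every GOOD pair type. Pairing these half-edges then produces a collision whp, exactly as in the unipartite proof. The first hypothesis ensures that every half-edge belongs to a GOOD pair, so such a collision is available.

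\emph{Step 3: conclude.} Combining Steps 1 and 2 gives $|\cC^{(1)}_n|/n\plim 1-\eta$.

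\textbf{Second largest component.} Given the giant, the residual graph after removing $\cC^{(1)}_n$ is again a multipartite configuration model whose local limit is the dual (conditioned on extinction) process. That process is subcritical, and primitivity keeps it irreducible, so its spectral radius is $<1$. A standard exploration with exponential tail bounds for subcritical multi-type processes, together with a union bound over vertices, then gives $|\cC^{(2)}_n|=O(\log n)$.

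\textbf{Main obstacle.} The main obstacle is Step 2: showing that the exploration reaches order $\sqrt n$ boundary \emph{in every pair-type simultaneously}. My first approach would be a second-moment argument on the multi-type exploration, using convergence of $M^p$ normalized to its Perron projector, together with coupling the exploration to $\cT_\infty$ for $o(\log n)$ generations. Beyond that point I would switch to a martingale lower bound for the remaining depletion-adjusted generations.
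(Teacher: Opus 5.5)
The paper gives no proof of this statement to compare with. It is quoted from \cite{gamarnik2015giant}, and the paper's own results (Theorems \ref{thm:giants_MCMs} and \ref{thm:dist}) yield only the subcritical half and a giant of positive expected fraction. Your subcritical case is correct. Your plan for the giant (local convergence in probability plus a ``giant is almost local'' estimate, with primitivity giving growth in all pair types) is a sensible standard route.

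\textbf{The gap: the bound $|\cC^{(2)}_n|=O(\log n)$.} A union bound over $n$ vertices needs $\Prob{v\notin\cC^{(1)}_n,\ |\cC(v)|\geq K\log n}=o(1/n)$ uniformly in $v$. The duality sketch cannot deliver this, for three reasons.
\begin{enumerate}
\item Conditioning on $\cC^{(1)}_n$ leaves a uniform matching of the remaining half-edges, but with a \emph{random} degree sequence depending on $\cC^{(1)}_n$. It also carries the extra conditioning that no residual component is larger. So identifying its limit is itself a duality theorem, not a citation.
\item Local convergence of the residual controls all but $o(n)$ vertices. It says nothing about individual vertices at the $o(1/n)$ level.
\item With only second moments there are no exponential moments. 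A vertex of residual degree $\Delta$ already has a component of size about $\Delta$. So you would at least need that whp every vertex of degree $\gg\log n$ lies in $\cC^{(1)}_n$, and this is the real content; it is not addressed.
\end{enumerate}

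What does work under these hypotheses is a Molloy--Reed-type exploration from each vertex, weighted by a Lyapunov function.
\begin{itemize}
\item Take the right Perron vector $w>0$, $Mw=\rho w$, $\rho>1$. Let $L_t$ be the $w$-weighted count of active half-edges, with weight $w_{(i,j)}$ for a half-edge in $H_{i\to j}$.
\item Pairing an active half-edge in $H_{i\to j}$ changes $L_t$ by $-w_{(i,j)}+\sum_l w_{(j,l)}D_{n,(i,j,l)}$, plus collision corrections that are bounded below.
\item The conditional mean of this increment is at least $(\rho-1)w_{(i,j)}-o(1)$, uniformly over removed sets of $o(n)$ half-edges. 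This uses the uniform integrability supplied by Assumption \ref{assump:offspring_degree_reg}.
\item The increments are bounded below, so truncate them from above and apply Azuma--Hoeffding. This shows that surviving $C\log n$ steps and then hitting $0$ before $n^{a}$ steps has probability $O(n^{-3})$ for $C$ large.
\item Clusters reaching size $n^{a}$ are merged pairwise with probability $1-o(n^{-2})$. Grow each to $n^{1/2+\eps}$ active half-edges of every pair type (by primitivity), then use a birthday-type collision bound.
\end{itemize}
This gives uniqueness and $O(\log n)$ together. Note that the exponential estimate comes from the lower tail of a supercritical walk, not from tails of a subcritical dual.

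\textbf{Two repairs in the giant part.} First, your Step 2 target is false for fixed $K$. Pairs of vertices in distinct finite components of size at least $K$ form a positive fraction of all pairs. The criterion you need, which with Step 1 gives $|\cC^{(1)}_n|/n\plim 1-\eta$, is
\[
\lim_{K\to\infty}\limsup_{n\to\infty}\frac{1}{n^{2}}\Exp{\#\{(u,v):|\cC(u)|,|\cC(v)|\geq K,\ u\not\leftrightarrow v\}}=0 .
\]
Second, the claim ``grows like $\rho^{r}$ in every pair type,'' and the martingale lower bound after the coupling phase, are not available under second moments alone. The $X\log X$ condition can fail, since it involves $\E[D_{(j,i)}D_{(j,l)}\log D_{(j,l)}]$. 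The conditional variances of generation sizes also involve third-order mixed degree moments, which need not be finite. You should first truncate degrees by vertex splitting, as in \cite{van2024random}. For a large truncation level the positive entries of $M$ stay positive, so primitivity and $\rho>1$ persist, and splitting can only destroy connections. Alternatively, use the weighted walk above, which needs only lower tails.

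Finally, primitivity of $M$ on all of $S^2$ already forces every pair to be GOOD. As in the proof of Lemma \ref{lem:good_pairs_c}, $\Exp{D_{(i,j,l)}}=0$ whenever $(i,j)$ is GOOD and $(j,l)$ is BAD, so the column of $M$ indexed by a BAD pair vanishes. Hence the first extra hypothesis, which you invoke to exclude BAD half-edges, is automatic.
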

Thus, our Theorem \ref{thm:giants_MCMs} removes the two additional assumptions made in Theorem \ref{thm:gamarnik_giant}. The first of these assumptions simply rules out the existence of sublinear number of edges either within a partition or across two partitions. This we feel is quite non-realistic, as in many real-world settings there could be potentially edges connecting two communities, even though perhaps very few. The second assumption is precisely the \emph{irreducibility} that we mentioned in the paragraph after stating Theorem \ref{thm:loc_lim}. Roughly speaking, in terms of the local limit, what this means is that any type can give birth to any other type by generation $p$.

In particular, removing these two assumptions make the model truly deviate from the unipartite case, and thus makes the analysis quite a technical challenge, something we have managed to overcome in the current paper. Under the general assumptions that we work under, our supercritical result corresponding to $\eta<1$ in Theorem \ref{thm:giants_MCMs} is in fact sharp, as we do not expect $\frac{|\cC^{(1)}|}{n}$ to converge in probability to a non-zero constant, but rather to fluctuate at scale $n$. Further, there can be more than one giant. We discuss these phenomena in more detail after introducing the \emph{configuration model} in Section \ref{sec:CMs_sharpness}. Additionally, our techniques reveal information about the typical distance between two random vertices, see Theorem \ref{thm:dist} below.

Further, although the general philosophy that survival of the local limit implies the existence of a giant in the pre-limiting graph sequence under some general conditions is well understood by now (e.g., see \cite[Theorem 3.1]{bollobas2007phase} or \cite[Theorem 2.28]{van2024random}), as far as we are aware, Theorem \ref{thm:giants_MCMs} is the first result that proves this when the local limit is allowed to be reducible\footnote{For a comparison, the necessary and and sufficient condition of \cite[Theorem 2.28]{van2024random} can be understood as a condition ruling out reducibility of the local limit.}, and we believe the technique can be used to prove existence of giants in other graph sequences as well, where the local limit is not necessarily irreducible. We comment more on this in Section \ref{sec:disc_giant}.


\subsection{Typical distances}
Our final result is on \emph{typical distances} between two uniform vertices in $G_n$. Let $o_1$ and $o_2$ be uniformly sampled (without replacement) random vertices of $G_n$. Recall for any two vertices in $G_n$, by $d_{G_n}(u,v)$ we denote the {graph distance} between $u$ and $v$ in $G_n$. Recall $\eta$ denotes the extinction probability of the local limit $\cT_\infty$ of $G_n$.
\begin{theorem}\label{thm:dist}
Assume the conditions of Theorem \ref{thm:loc_lim}.
\begin{itemize}
    \item[1.] If $\eta=1$, 
    \begin{align*}
        \limsup_{n\to \infty}\Prob{d_{G_n}(o_1,o_2)<\infty}=0.
    \end{align*}
    \item[2.] If $\eta<1$, then there is $j \in [k]$ and a constant $K^*>0$ such that
    \begin{align*}
        \liminf_{n \to \infty}\CProb{d_{G_n}(o_1,o_2)<K^* \log n}{o_1,o_2\in \cN^{(j)}}>0.
    \end{align*}
    In particular, $\liminf_{n \to \infty}\Prob{d_{G_n}(o_1,o_2)<K^* \log n}>0$.
    \item[3.] If $\eta<1$, and additionally, $(i,j)$ is $\rm GOOD$ for all $i,j \in [k]$, then there is $j\in [k]$ and a constant $K_*$ such that
    \begin{align*}
        \liminf_{n \to \infty}\CProb{d_{G_n}(o_1,o_2)>K_* \log n}{o_1,o_2\in \cN^{(j)}}>0.
    \end{align*}
    In particular, $\liminf_{n \to \infty}\Prob{d_{G_n}(o_1,o_2)>K_* \log n}>0$.
\end{itemize}

\end{theorem}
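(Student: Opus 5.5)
Part 1 follows directly from Theorem \ref{thm:loc_lim} and the subcritical half of Theorem \ref{thm:giants_MCMs}. Since $|\cC^{(1)}_n|/n\plim 0$, we have
\begin{align*}
\Prob{d_{G_n}(o_1,o_2)<\infty}=\Exp{\sum_{v}\frac{|\cC(v)|-1}{n(n-1)}}\le \Exp{\frac{|\cC^{(1)}_n|}{n-1}}.
\end{align*}
The quantity inside the expectation is bounded by $1$ and tends to $0$ in probability, so bounded convergence gives the claim.

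For part 2 I would work inside the exploration used to prove the supercritical half of Theorem \ref{thm:giants_MCMs}. Condition \eqref{eq:ext_BP_condition} fails, so there is a reachable type triplet $u$ and a simple cycle $c\in\cL(u)$ with $\sM(c)>1$. Restricted to the triplets on $c$, the process is a single-type-like branching process along the cycle with mean growth $\sM(c)^{1/|c|}>1$ per generation. Let $j$ be the middle type of $u$. I would explore from $o_1$ and from $o_2$ in $G_n$, both conditioned to lie in $\cN^{(j)}$, using the configuration-model pairing conditioned on simplicity (Remark \ref{rem:uniform_induce}). By the local convergence of Theorem \ref{thm:loc_lim}, each exploration reaches a copy of $u$ within bounded depth with probability bounded below. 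From there I would run only the "cycle-restricted" exploration, which stochastically dominates a supercritical Galton--Watson process. A coupling argument (a Chernoff/martingale bound on each generation) then shows that with positive probability each exploration grows geometrically and reaches $n^{1/2+\delta}$ unpaired half-edges of the relevant types after $K\log n$ steps. At that point the two half-edge sets lie in matching partitions, and a birthday-type bound shows they connect with probability $1-o(1)$. This gives distance at most $K^*\log n$ with probability bounded away from $0$. The main obstacle is keeping the coupling valid up to size $n^{1/2+\delta}$ when types can be depleted unevenly. I would handle this by stopping at $n^{1/2+\delta}$, since that size uses only $o(n)$ half-edges of each type, so the offspring laws are perturbed by $o(1)$ and a slightly smaller cycle product still exceeds $1$.

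For part 3, with all pairs GOOD, Assumption \ref{assump:degree_regularity} gives bounded second moments. I would do a first-moment path count: the expected number of paths of length $\ell$ between fixed $o_1,o_2$ is at most about $\frac{C}{n}\|M_n\|^{\ell}$, where $M_n$ is the $S^2\times S^2$ mean offspring matrix of the pre-limit. Since $M_n$ converges to a finite-entry matrix, $\|M_n\|\le \nu$ for some finite $\nu$. Summing over $\ell\le K_*\log n$ with $K_*<1/\log\nu$ (or any $K_*$ when $\nu\le1$) gives $o(1)$, so the distance exceeds $K_*\log n$ with probability tending to $1$, conditionally on $o_1,o_2\in\cN^{(j)}$ for any $j$. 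The non-simple correction is handled by the standard fact that conditioning on simplicity changes probabilities by at most a bounded factor, which follows from the enumeration remark since all pairs are GOOD.
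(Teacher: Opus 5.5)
Part 1 is the paper's argument.

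\textbf{Part 2.} You follow the paper's strategy: explore from both endpoints only along a simple loop $c$ with $\sM(c)>1$, then join the two frontiers. The difference is that you push each exploration to $n^{1/2+\delta}$, past the window where the branching-process coupling is exact. This costs you an approximate coupling: a lower stochastic domination by depleted offspring laws (truncated, for unbounded degrees), plus control of collisions. In return you get connection with probability $1-o(1)$ given that both explorations survive. The paper instead stops at $\tau'(o_1,\eps)$, of order $\eps\sqrt n$, where the coupling is exact with probability $1-O(\eps^2)$ (Proposition~\ref{prop:coupling_valid_epssqrtn}). It then joins two sets of $g(\eps)\sqrt n$ compatible half-edges with probability bounded away from $0$, and reads the $O(\log n)$ bound off the heights of the explored trees. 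That is cheaper and already enough for a positive $\liminf$.

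\textbf{Part 3.} This is a genuinely different route. Since all pairs are GOOD, every $|H_{i\to j}|=\Theta(n)$, and $m=O(\log n)$ prescribed disjoint pairings occur with probability $(1+o(1))$ times the product of the inverse pool sizes. Your first-moment path count then bounds the expected number of self-avoiding paths of length $\ell$ by $C\,d_{o_1}d_{o_2}\,n^{-1}\nu^{\ell}$, with $\nu=\sup_n\|M_n\|<\infty$. This gives $\Prob{d_{\G_n}(o_1,o_2)\le K_*\log n}=O(n^{K_*\log\nu-1})\to 0$, which transfers to $G_n$ by Corollary~\ref{cor:sample_Gn}.

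The paper instead adds half-edges to obtain a dominating graph $\G^*_n$, using a monotone coupling of configuration models, whose degree and tensor laws are dominated by common variables $D^*,D^\dagger$. It couples the breadth-first exploration of $\G^*_n$ with the resulting two-type tree up to $\eps n^{1/4}$ vertices. It then argues that the ball of radius $K_*\log n$ about $o_1$ lies inside this $o(n)$-sized cluster, so $o_2$ misses it.

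Your count is more elementary and gives more: probability tending to $1$, for every $j$, with an explicit $K_*<1/\log\nu$, and no ball-size control. The paper's domination device is aimed at eventually dropping the all-GOOD hypothesis (Problem~\ref{prob:dist}). Neither argument, as it stands, removes it, since both rely on a configuration-model representation of every pair.

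\textbf{A point to make explicit in part 2.} If both explorations go around $c$ in the same orientation, their frontiers lie in $H_{s_a\to s_{a+1}}$ and $H_{s_b\to s_{b+1}}$. These are in general not compatible, so ``matching partitions'' is not automatic. The paper explores from $o_2$ along the reversed loop $\overleftarrow{c}$, so that at the stopping times one frontier is in $H_{s_0\to s_1}$ and the other in $H_{s_1\to s_0}$. This needs $\sM(\overleftarrow{c})=\sM(c)$. That holds because each numerator $\sum_x d^{(q,p)}_x d^{(q,r)}_x$ is symmetric in $p,r$, and the product of the normalizations $|H_{s_i\to s_{i+1}}|=|H_{s_{i+1}\to s_i}|$ around the loop does not depend on orientation. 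Alternatively, keep one orientation and aim the frontier of $o_1$ at the unexplored $H_{s_{a+1}\to s_a}$ half-edges carried by the type-$s_{a+1}$ vertices of the other cluster. Their number has positive mean by Assumption~\ref{assump:one_good_deg_greater_2}.
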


For part $(3.)$ above, the additional assumption on all pairs $(i,j)$ being GOOD should not be necessary, however we have not managed to prove the lower bound without this assumption. In particular, if a \emph{monotonicity result} (see Problem \ref{prob:dist}) is true, then the lower bound will be true without this assumption, which we expect indeed to be the case; this extra assumption is an artifact of our proof. We discuss more on this in Section \ref{sec:disc_TD}.

\paragraph{Organization of the rest of the article.} In Section \ref{sec:conf_mod}, we discuss a useful technique, namely sampling using configuration models. In Section \ref{sec:proof_loclim} we provide the proof of our local limit result Theorem \ref{thm:loc_lim}. In Section \ref{sec:survival} we prove the survival condition of Theorem \ref{thm:extinct_1dep}. In Section \ref{sec:giant_proof} we prove Theorem \ref{thm:giants_MCMs} on the existence of giant components. In Section \ref{sec:dist} we prove Theorem \ref{thm:dist} on typical distances. Finally, we close off with a discussion in Section \ref{sec:disc} where we discuss a few applications.

\section{Sampling using configuration models} \label{sec:conf_mod}

\subsection{Multipartite configuration models and sharpness of Theorem \ref{thm:giants_MCMs}} \label{sec:CMs_sharpness}
In this section we discuss the \emph{configuration model} (see e.g. Bollobás \cite{bollobas2011random}, van der Hofstad \cite[Chapter 4]{van2024random}), that constructs either a unipartite or a bipartite multigraph with a given degree sequence via random matching of half-edges. Recall the degree sequences $\bd_{(i,j)}(n)$ and $\bd_i(n)$ for each $n \geq 1$. For every $i \in [k]$ and every $x \in [N^{(i)}]$, associate sets $H_{x,i\rightarrow j}$ and $H_{x,i\rightarrow i}$ of \emph{half-edges}, where the sizes of these sets satisfy $|H_{x,i\rightarrow j}|=d^{(i,j)}_x(n)$ and $|H_{x,i\rightarrow i}|=d^{(i)}_x(n)$. We also define 
\begin{align*}
    H_{i \rightarrow j}=\cup_{x \in [N^{(i)}]}H_{x,i \rightarrow j}\textrm{ when }i \neq j,H_{i \rightarrow i}=\cup_{x \in [N^{(i)}]}H_{x,i\rightarrow i},\textrm{ and }H=\cup_{i,j\in [k]}H_{i \rightarrow j}.\numberthis \label{eq:half_edge_sets}
\end{align*}
Any half-edge $h\in H_{i \rightarrow j}$ is said to be \emph{compatible} with any half-edge in $H_{j \rightarrow i}$ and \emph{not compatible} with any half-edge in $H_{p\rightarrow q }$ where either $p \neq j$ or $q \neq i$. For a half-edge $h \in \cup_{j \in [k]}H_{x,i\rightarrow j}$, we say that $h$ is \emph{incident to} $x$.

Then, we construct a multigraph $CM_n$ on the vertex set $V_n$ as follows:
\begin{itemize}
    \item We begin the process by taking a half-edge $h$ arbitrarily from $H$, sampling a uniformly random compatible half-edge $h'$, and matching $h$ with $h'$. If $h \in H_{x,i\rightarrow j}$ and $h' \in H_{y,j \rightarrow i}$ for $i,j \in [k]$, we declare $\{x,y\}$ to be an edge in $CM_n$.
    \item In a general step, we do the same. Take an arbitrary yet unmatched half-edge from $H$, and sample uniformly at random a compatible yet unmatched half-edge, and pair them. Declare $\{x,y\}$ to be an edge in $CM_n$ if $x$ and $y$ are the vertices these half-edges are incident to.
\end{itemize}
We run this process till we run out of unmatched half-edges. The condition $\sum_{x\in [N^{(i)}]}d^{(i,j)}_x(n)=\sum_{y\in [N^{(j)}]}d^{(j,i)}_y(n)$ and $\sum_{x \in [N^{(i)}]}d^{(i)}_x(n)$ being even ensures that no unmatched half-edges will be left at the end. The associated multigraph $CM_n$ thus constructed is called the \emph{multipartite configuration model} with degree sequence matrix $\bD(n)$. Let us further denote by $CM^{i,i}_n$ the subgraph of $CM_n$ induced by the vertices in $\cN^{(i)}$, and for $i\neq j$ by $CM^{i,j}_n$ the bipartite graph induced by the cross-edges going in between $\cN^{(i)}$ and $\cN^{(j)}$, on the vertex set $\cN^{(i)}\cup \cN^{(j)}$. Then:
\begin{itemize}
    \item $CM^{i,i}_n$ is distributed as a (unipartite) configuration model on $\cN^{(i)}$ with degree sequence $\bd_{i}(n)$;
    \item For $i \neq j$, $CM^{i,j}_n$ is the bipartite configuration model on $\cN^{(i)}\cup \cN^{(j)}$ with bi-degree sequence $(\bd_{(i,j)}(n),\bd_{(j,i)}(n))$.
    \item Let us also note that the graphs $CM^{i,j}_n$ can be constructed on their own, without constructing the entire graph $CM_n$, by only pairing half-edges from $H_{i \rightarrow j}$ with those in $H_{j \rightarrow i}$, for $i,j\in [k]$.
\end{itemize}

Before proceeding any further, let us discuss the sharpness of our giant component result Theorem \ref{thm:giants_MCMs}.

\paragraph{Sharpness of Theorem \ref{thm:giants_MCMs}.} As mentioned after the statement of Theorem \ref{thm:gamarnik_giant}, in general, under Assumptions \ref{assump:part_i_size}, \ref{assump:degree_regularity}, \ref{assump:offspring_degree_reg} and \ref{assump:one_good_deg_greater_2}, one cannot conclude decisively about uniqueness of the giant component as in Theorem \ref{thm:gamarnik_giant}. For example, take $k=2$, and let both $(1,1)$ and $(2,2)$ be GOOD, and let $\bd_{(1,1)}(n)$ and $\bd_{(2,2)}(n)$ satisfy Assumptions \ref{assump:offspring_degree_reg} and be such that $$\frac{\Exp{D_{(1,1)}(D_{(1,1)}-1)}}{\Exp{D_{(1,1)}}},\frac{\Exp{D_{(2,2)}(D_{(2,2)}-1)}}{\Exp{D_{(2,2)}}}>1. $$ It then follows from existing results (e.g., by Molloy and Reed \cite{molloy1995critical,molloy1998size}) that the subgraphs $G_n(1)$ and $G_n(2)$ themselves have their own giant components of linear size with logarithmic second largest components, under Assumption \ref{assump:part_i_size}. Now, say $(1,2)$ is BAD. If $\bd_{(1,2)}(n)$ is a null sequence for each $n \geq 1$, then clearly there are no edges going between $\cN^{(1)}$ and $\cN^{(2)}$, so that in this situation we have two giants in $G_n$. 

On the other hand, consider the following random process. Take a uniformly random bijection $\psi$ from $\cN^{(1)}$ to $\cN^{(2)}$ (assume they have equal size). For each $x \in [N^{(1)}]$ consider independent ${\rm Ber}(m/n)$ random variables $B_x$ where $m=n/\log n$ and conditionally on $\psi$, assign for each $x$ such that $B_x=1$, $d^{(1,2)}_x=d^{(2,1)}_{\psi(x)}=3$, and assign for all other $x$ in either $\cN^{(1)}$ or $\cN^{(2)}$, $d^{(1,2)}_x=d^{(2,1)}_x=0$. First note that $\sum_{x=1}^{N^{(1)}}d^{(1,2)}_x=\sum_{x=1}^{N^{(2)}}d^{(2,1)}_x$ is concentrated around $3m=o(n)$, so that $(1,2),(2,1)$ are BAD pairs. Now, on the (positive probability) event that the largest components $\cC_1$ and $\cC_2$ in respectively $G_n(1)$ and $G_n(2)$ are at least $\delta n$, a standard Bernoulli concentration argument shows that $\Theta(m)$ many vertices in all of $\cC_1, \cC_1^c, \cC_2$ and $\cC_2^c$ have degree $3$ going into the other partition. By a result of Janson \cite{janson2009probability}, since $\sum_{x \in [N^{(i)}]:B_x=1}(d^{(i,j)}_x)^2=\Theta(\sum_{x \in [N^{(i)}]:B_x=1}d^{(i,j)}_x)$, the \emph{cross-edges} going in between $\cN^{(1)}$ and $\cN^{(2)}$ can be sampled as a bipartite configuration model, conditionally on its asymptotically positive probability event of being simple, with bi-degree sequence $(\bd_{(1,2)}(n),\bd_{(2,1)}(n))$. 

Now, given the good event of \emph{simplicity}, and that \emph{all of $\cC_1, \cC_1^c, \cC_2$ and $\cC_2^c$ have $\Theta(m)$ many vertices with degree $3$ going into the other partition}, it follows that with probability at most $c^m$ for some $c<1$, all of the half edges from $\cC_1$ get paired to half-edges from $\cC_2^c$. Thus, the event that there  is a direct cross-edge connecting $\cC_1$ and $\cC_2$ has probability at least $1-c^m=1-o(1)$. Thus, we have a unique giant component in this case with (conditional) high probability (given the good event), even though the average number of cross-edges going in between $\cN^{(1)}$ and $\cN^{(2)}$ is sublinear in $n$. In particular, this example illustrates that, under our general assumptions of Theorem \ref{thm:giants_MCMs}, the giant can be both unique or non-unique each with positive probability, and its size can fluctuate on the linear scale.

\subsection{Sampling $G_n$}

In this section we discuss how to properly use the multipartite configuration model to sample $G_n$. 

\begin{definition}[Amalgamation]
    For every $i,j\in [k]$, let $G_{i,i}=(V_{i,i},E_{i,i})$ be a multigraph on the vertex set $\cN^{(i)}$ and $G_{i,j}=(V_{i,j},E_{i,j})$ be a bipartite multigraph on the bipartitioned vertex set $\cN^{(i)}\cup \cN^{(j)}$. Given graphs $G_{i,j}$ for every $i,j \in [k]$, we define the multigraph $\cG(G_{i,j}:i,j\in [k])$ to be the simply the graph on $V_n=\cN^{(1)}\cup \dots \cup \cN^{(k)}$ whose edge set is the union $\cup_{i,j\in [k]}E_{i,j}$. We call $\cG(G_{i,j}:i,j\in [k])$ the \emph{amalgamation} of the graphs $G_{i,j}$ for $i,j\in [k]$.
\end{definition}
By Remark \ref{rem:uniform_induce}, we note that $G_n$ has the same law as $\cG=\cG(G_{i,j}:i,j\in [k])$ when $G_{i,i}$ is a uniform simple graph on $\cN^{(i)}$ with degree sequence $\bd_i(n)$ and $G_{i,j}$ for $i \neq j$ is a uniform simple bipartite graph on $\cN^{(i)}\cup \cN^{(j)}$ with given bi-degree sequence $(\bd_{(i,j)}(n),\bd_{(j,i)}(n))$, for all $i,j\in [k]$. To sample these individual simple uniform graphs, we use configuration models. Let us begin with a result on the simplicity probability of configuration models. The following result is a summary of \cite[Theorem 7.12, Proposition 7.15]{van2016random} and \cite[Corollary 7.16]{angel2019limit}. Recall $\phi_i$ and $\phi_{i,j}$ from \eqref{eq:def_nu_ij}.

\begin{theorem}[Simplicity probability of configuration models]\label{thm:simplicity_CM}
    Assume the pair $(i,j)$ is $\rm GOOD$, $i,j\in [k]$.
    \begin{itemize}
        \item If $i=j$, $\liminf_{n \to \infty}\Prob{CM_n^{i,i}\textrm{ is simple}}=e^{-\phi_i/2-(\phi_i)^2/4}>0$.
        \item  If $i \neq j$, $\liminf_{n \to \infty}\Prob{CM_n^{i,j}\textrm{ is simple}}=e^{-\phi_{i,j}}>0$.
        \item The law of $CM_n^{i,i}$ conditionally on the event $\{CM_n^{(i,i)}\textrm{ is simple}\}$ is uniform on the set of all simple graphs on $\cN^{(i)}$ with degree sequence $\bd_i(n)$.
        \item The law of $CM_n^{i,j}$ conditionally on the event $\{CM_n^{(i,j)}\textrm{ is simple}\}$ is uniform on the set of all simple bipartite graphs on $\cN^{(i)}\cup \cN^{(j)}$ with bi-degree sequence $(\bd_{(i,j)}(n),\bd_{(j,i)}(n))$.
    \end{itemize}
\end{theorem}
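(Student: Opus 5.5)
The statement is Theorem \ref{thm:simplicity_CM}. It collects known facts, so the plan is to reduce each item to the cited unipartite and bipartite results rather than to reprove them. The reduction uses the observation made after the definition of $CM_n$: $CM^{i,i}_n$ is exactly a unipartite configuration model on $\cN^{(i)}$ with degree sequence $\bd_i(n)$, and for $i\neq j$, $CM^{i,j}_n$ is exactly a bipartite configuration model with bi-degree sequence $(\bd_{(i,j)}(n),\bd_{(j,i)}(n))$. Each of these is generated by matching only the half-edges in $H_{i\to i}$, respectively $H_{i\to j}\cup H_{j\to i}$.

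\textbf{Uniformity (third and fourth items).} These hold for every fixed $n$ and need no asymptotics. The plan is the standard counting argument. Label the half-edges, and fix a simple graph $G$ with the prescribed degree sequence. The number of perfect matchings of the half-edges that produce $G$ is
\begin{align*}
\prod_{x\in \cN^{(i)}} d^{(i)}_x!
\end{align*}
in the unipartite case, and
\begin{align*}
\prod_{x\in \cN^{(i)}} d^{(i,j)}_x!\prod_{y\in\cN^{(j)}}d^{(j,i)}_y!
\end{align*}
in the bipartite case. The reason is that $G$ has no loops or multi-edges, so each half-edge at a vertex is assigned to a distinct edge, and all permutations of half-edges at a vertex give distinct matchings producing $G$. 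This count does not depend on $G$. The sequential uniform compatible pairing yields a uniform perfect matching, so conditionally on simplicity the law is uniform.

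\textbf{Asymptotic simplicity probability (first and second items).} Here I would check that the hypotheses of \cite[Theorem 7.12, Proposition 7.15]{van2016random} hold in the unipartite case, and those of \cite[Corollary 7.16]{angel2019limit} in the bipartite case. Since $(i,j)$ is GOOD, $\Exp{D_{(i,j)}}>0$. By GOOD-symmetry $\Exp{D_{(j,i)}}>0$ as well, and Assumption \ref{assump:part_i_size} gives linear part sizes. The total number of half-edges is then $l_{i,j}(n)=\Theta(n)$, and Assumption \ref{assump:degree_regularity} gives convergence of the empirical first and second moments of the degrees. Those are exactly the regularity conditions of the cited results. The standard proof behind them is a Poisson approximation, by the method of moments, of the number of self-loops and double edges. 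In the unipartite case the limiting means are $\phi_i/2$ and $\phi_i^2/4$. In the bipartite case there are no self-loops, and the mean number of double edges is
\begin{align*}
\frac{1}{4}\,\frac{\sum_x d_x(d_x-1)}{l_{i,j}}\cdot\frac{\sum_y d_y(d_y-1)}{l_{i,j}}\to \phi_{i,j}.
\end{align*}
The probability of simplicity is then the Poisson probability of zero such structures. Finiteness of the second moments makes $\phi_i$ and $\phi_{i,j}$ finite, so the limits are strictly positive.

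\textbf{Main obstacle.} The main obstacle is matching normalisations. The empirical second moment must be divided by the mean of the appropriate side, using $N^{(i)}\Exp{D_{(i,j)}}\approx N^{(j)}\Exp{D_{(j,i)}}$, so that the resulting constants coincide with the definition \eqref{eq:def_nu_ij}. Second, we allow the degree sequences to be random, so I would first condition on $\bD(n)$ and apply the deterministic results, and then pass to the limit using the convergence in probability of the moments. Since $e^{-\phi}$ is continuous in the moments, bounded convergence finishes this step.
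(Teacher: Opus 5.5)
\noindent Your uniformity count and the unipartite item are correct, and like the paper (which only cites the literature) you reduce everything to known results. The gap is the bipartite constant: your claim that the mean number of double edges in $CM^{i,j}_n$ is $\frac14\cdot\frac{\sum_x d_x(d_x-1)}{l_{i,j}}\cdot\frac{\sum_y d_y(d_y-1)}{l_{i,j}}$ is false.

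Write $a_x=d^{(i,j)}_x$, $b_y=d^{(j,i)}_y$ and $L=l_{i,j}(n)$. Fix $x\in\cN^{(i)}$ and $y\in\cN^{(j)}$. The expected number of pairs of parallel edges between $x$ and $y$ is
\begin{align*}
\binom{a_x}{2}\,\frac{b_y(b_y-1)}{L(L-1)}.
\end{align*}
This counts an unordered pair of half-edges at $x$ and an ordered pair at $y$, times the probability that a uniform bijection $H_{i\rightarrow j}\to H_{j\rightarrow i}$ realises both prescribed matches. Summing over all $(x,y)\in\cN^{(i)}\times\cN^{(j)}$ gives
\begin{align*}
\frac{\sum_x a_x(a_x-1)\,\sum_y b_y(b_y-1)}{2L(L-1)}\to \frac12\,\frac{\Exp{D_{(i,j)}(D_{(i,j)}-1)}}{\Exp{D_{(i,j)}}}\cdot\frac{\Exp{D_{(j,i)}(D_{(j,i)}-1)}}{\Exp{D_{(j,i)}}}=2\phi_{i,j}.
\end{align*}
In the unipartite case there is a second factor $\frac12$, which produces $\phi_i^2/4$. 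It comes from summing over unordered pairs $\{x,y\}$. In the bipartite case $x$ and $y$ lie in different parts, so each location is counted once and that halving is absent.

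A sanity check: take all degrees equal to $2$ and $N^{(i)}=N^{(j)}=m$. The mean is then $m/(2m-1)\to 1/2$, while $\phi_{i,j}=1/4$. This matches the classical McKay asymptotics for $0$--$1$ matrices with prescribed line sums, whose exponent is $-\sum s(s-1)\sum t(t-1)/(2S^2)$. So the Poisson argument you sketch gives $\lim_{n\to\infty}\Prob{CM^{i,j}_n\textrm{ is simple}}=e^{-2\phi_{i,j}}$.

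Your step of matching normalisations so that the constants coincide with \eqref{eq:def_nu_ij} therefore cannot be carried out. The ratios do normalise correctly, but the prefactor $\frac14$ in the definition of $\phi_{i,j}$ should be $\frac12$. As written, the second item is off by a factor $2$ in the exponent. You should have flagged this rather than adjusting the computation to fit.

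What survives is that the limit is still strictly positive. That is all the paper uses later, in Corollary \ref{cor:sample_Gn} to transfer $o(1)$ events from $\G_n$ to $G_n$. The fix is to state the item with $e^{-2\phi_{i,j}}$, or equivalently to replace $\frac14$ by $\frac12$ in \eqref{eq:def_nu_ij}.

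The rest of your plan is fine: the identification of $CM^{i,i}_n$ and $CM^{i,j}_n$ as unipartite and bipartite configuration models, the count $\prod_x a_x!\prod_y b_y!$ for a simple $G$, and the unipartite limit $e^{-\phi_i/2-\phi_i^2/4}$. One small point on random degree sequences: you cannot apply an asymptotic statement at fixed $n$ after conditioning on $\bD(n)$. Pass to a subsequence along which the moment convergence holds almost surely, apply the deterministic result there, and then use bounded convergence.
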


As a direct consequence of the last result and Remark \ref{rem:uniform_induce}, we have the following corollary on sampling $G_n$ whose proof we omit, and which we use throughout the paper.

\begin{corollary}[Sampling $G_n$]\label{cor:sample_Gn}
    Consider graphs $G_{i,j}$ defined as follows. If $(i,j)$ is $\rm BAD$, we simply let $G_{i,j}$ be a uniform random simple graph with degree sequence $\bd_i(n)$ if $i=j$ and with degree sequence $\bd_{(i,j)}(n)$ if $i \neq j$. If $(i,j)$ is $\rm GOOD$, we let $G_{i,j}$ be $CM_n^{i,j}$. Recall $G_n$ is a uniform multipartite random graph with given degree sequences, satisfying Assumptions \ref{assump:part_i_size}, \ref{assump:degree_regularity}, \ref{assump:one_good_deg_greater_2} and \ref{assump:offspring_degree_reg}. The following statements are true:
    \begin{itemize}
        \item The random graph $\G_n:=\cG(G_{i,j}:i,j\in [k])$ conditionally on the event $\{\G_n \textrm{ is simple}\}$ has the same law as $G_n$.
        \item Moreover,
        \begin{align*}
            \liminf_{n \to \infty}\Prob{\G_n \textrm{ is simple}}=\left(\prod_{\substack{i\in [k]:\\(i,i)\textrm{ is GOOD}}}e^{-\phi_i/2-(\phi_i)^2/4}\right)\left(\prod_{\substack{i\neq j \in [k]:\\(i,j)\textrm{ is GOOD }}}e^{-\phi_{i,j}} \right)>0.
        \end{align*}
        Thus, for any subset $A_n$ of simple graphs, $\Prob{G_n\in A_n}=o(1)$ if and only if $\Prob{\G_n \in A_n}=o(1)$.  
    \end{itemize}
\end{corollary}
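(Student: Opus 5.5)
The plan is to reduce the corollary to independence of the component graphs $G_{i,j}$ together with Theorem \ref{thm:simplicity_CM} and Remark \ref{rem:uniform_induce}. Throughout, we sample the graphs $G_{i,j}$ for the different unordered pairs $\{i,j\}$ independently. The edge sets of the $G_{i,j}$ lie in pairwise disjoint "slots": within $\cN^{(i)}$, or between $\cN^{(i)}$ and $\cN^{(j)}$. Hence the amalgamation $\G_n$ is simple if and only if every $G_{i,j}$ is simple. In particular
\begin{align*}
\{\G_n\textrm{ is simple}\}=\bigcap_{(i,j)\textrm{ GOOD}}\{CM_n^{i,j}\textrm{ is simple}\},
\end{align*}
because the BAD components are simple by construction.

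\textbf{Law of $\G_n$ given simplicity.} By independence, conditioning on the intersection above is the same as conditioning each GOOD $CM_n^{i,j}$ separately on its own simplicity. By the last two items of Theorem \ref{thm:simplicity_CM}, each conditioned GOOD component is uniform among simple (bipartite) graphs with the prescribed (bi-)degree sequence. The BAD components are uniform by definition. So, conditionally, the $G_{i,j}$ are independent uniform simple graphs with the prescribed degrees. The amalgamation map is a bijection between tuples of such graphs and $\cG(\bD(n))$, since a graph is recovered from its within-part and cross-part restrictions. Therefore the amalgamation of independent uniforms is uniform on $\cG(\bD(n))$, which is the law of $G_n$; this is also the content of Remark \ref{rem:uniform_induce}.

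\textbf{Simplicity probability.} Independence gives
\begin{align*}
\Prob{\G_n\textrm{ is simple}}=\prod_{(i,j)\textrm{ GOOD}}\Prob{CM_n^{i,j}\textrm{ is simple}},
\end{align*}
where the product runs over unordered pairs. Taking limits with the first two items of Theorem \ref{thm:simplicity_CM} yields the stated product of exponentials. Each factor is strictly positive because $\phi_i,\phi_{i,j}<\infty$, which follows from the finite second moments in Assumption \ref{assump:degree_regularity}.

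\textbf{Equivalence of $o(1)$ events.} Let $c>0$ be the $\liminf$ just computed and let $A_n$ be a set of simple graphs. Then
\begin{align*}
\Prob{G_n\in A_n}=\frac{\Prob{\G_n\in A_n}}{\Prob{\G_n\textrm{ simple}}},
\end{align*}
since $A_n$ contains only simple graphs. From this, $\Prob{G_n\in A_n}\le \Prob{\G_n\in A_n}/(c/2)$ for large $n$, and $\Prob{\G_n\in A_n}\le\Prob{G_n\in A_n}$. Both directions of the equivalence follow.

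The only delicate point is the bookkeeping around ordered versus unordered pairs. We must use one graph per unordered pair $\{i,j\}$, since $G_{i,j}$ and $G_{j,i}$ describe the same cross-edges, and the product in the statement should be read over unordered pairs. Apart from that, the argument is routine.
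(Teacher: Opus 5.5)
Your proof is correct and matches the argument the paper intends when it calls the corollary a direct consequence of Theorem \ref{thm:simplicity_CM} and Remark \ref{rem:uniform_induce} and omits the proof: independence of the components, which sit in disjoint edge slots; conditional uniformity of each configuration model given simplicity; the amalgamation bijection onto $\mathcal G(\mathbf D(n))$; and the ratio bound for the $o(1)$ equivalence. Reading the cross-pair product (and the amalgamation) over unordered pairs is right; one small caveat is that equality for the $\liminf$ of the product needs the cited simplicity probabilities to converge, which the underlying results provide. Without that you only get $\liminf \ge \prod \liminf > 0$, and positivity is all the paper uses later.
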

As a consequence of Corollary \ref{cor:sample_Gn}, it suffices to prove Theorems \ref{thm:loc_lim}, \ref{thm:giants_MCMs} and \ref{thm:dist} for the graph $\G_n$ instead, which is our strategy.

\section{Local limit: proof of Theorem \ref{thm:loc_lim}}\label{sec:proof_loclim}
In this section, we provide the proof of Theorem \ref{thm:loc_lim}. The usual well trodden path of coupling the graph exploration with the growth of the branching process arising as the local limit works in our case, albeit with some technicalities arising because of the multipartite nature of the model. This style of argument has appeared in the literature before (see e.g., the local convergence proofs in \cite{van2024random}). We ask the reader to permit us an argument capturing the key ideas and emphasizing readability, taking care to avoid repetition as much as possible from earlier literature. 

Recall our aim is to show \eqref{eq:loc_conv_claim}. Thanks to Corollary \ref{cor:sample_Gn}, it is enough to show \eqref{eq:loc_conv_claim} with $G_n$ replaced by $\G_n$. Before beginning the proof, let us make one more reduction. First note that the LHS of \eqref{eq:loc_conv_claim} can be written as $\CProb{B_{\G_n}(o_n,r)\simeq_S (\bt,o)}{\G_n}$, where $o_n$ is a randomly sampled vertex of $\G_n$. Thus, \eqref{eq:loc_conv_claim} would follow as a consequence of Chebyshev's inequality (together with the following statement for $l=1,2$), if we can show the general statement 
\begin{align*}
    &\Prob{B_{\G_n}(o_{n,1},r_1)\simeq_S (\bt_1,o_1), B_{\G_n}(o_{n,2},r_2)\simeq_S (\bt_2,o_2),\dots,B_{\G_n}(o_{n,m},r_m)\simeq_S (\bt_m,o_m)}\\&\to \prod_{i=1}^m \Prob{B_{\cT_\infty}(\varnothing,r_i)\simeq_S (\bt_i,o_i)},\numberthis \label{eq:loc_conv_general_claim}
\end{align*}
for any fixed integer $m\geq 1$, where $o_{n,1},\dots,o_{n,m}$ are independently sampled with replacement uniform vertices (in some order) from $\G_n$ (note that with high probability these are distinct), and where $r_1,\dots,r_m\geq 1$ are integers, and $(\bt_1,o_1),\dots,(\bt_m,o_m)$ are some fixed rooted trees with types in $S=[k]$. 
We will prove Theorem \ref{thm:loc_lim} by giving an inductive proof of \eqref{eq:loc_conv_general_claim}. 

 Recall an ordered pair $(i,j)\in [k]\times [k]$ is ${\rm BAD}$ if $\Exp{D_{(i,j)}}=0$. 
 Recall from \eqref{eq:half_edge_sets} by $H_{i \rightarrow j}$ we denote the set of half-edges incident to vertices in $\cN^{(i)}$ that correspond to their degrees in $\cN^{(j)}$. If $(i,j)$ is $\rm BAD$, we also call the half-edges in both $H_{i \rightarrow j}$ and $H_{j \rightarrow i}$ $\rm BAD$. For a vertex $v \in \cN^{(j)}$, let us also define by ${\rm BAD}_v$ the set of all $\rm BAD$ half-edges incident to $v$, i.e. 
\begin{align*}
    {\rm BAD}_v:=\cup_{(j,l){\rm BAD}}\{h \in H_{j \rightarrow l}: h\textrm{ is incident to }v\}.\numberthis \label{eq:def_bad_v}
\end{align*}

Define the sequence
\begin{align*}
    \sS_n:=\sum_{i,j\in [k]:(i,j){\rm BAD}}\frac{|H_{i \rightarrow j}|}{N^{(i)}}+\sum_{i \in [k]}\sum_{\substack{j \in [k]:\\(i,j){\rm GOOD}}}\sum_{v \in \cN^{(j)}}\frac{d^{(i)}_v}{|H_{j \rightarrow i}|}\sum_{l \in [k]:(j,l){\rm BAD}}d^{(l)}_v.\numberthis \label{eq:def_sn_goodbadseq}
\end{align*}

\begin{lemma}\label{lem:sn_small}
As $n \to \infty$, $\sS_n=o(1)$.
\end{lemma}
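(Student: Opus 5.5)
The plan is to treat the two sums in the definition \eqref{eq:def_sn_goodbadseq} of $\sS_n$ separately and show that each summand tends to $0$. Since there are at most $k^2$ (respectively $k^3$) summands with $k$ fixed, this gives $\sS_n=o(1)$. I read the second term in the natural way suggested by \eqref{eq:def_bad_v}: for a GOOD pair $(i,j)$, the inner quantity is
\begin{align*}
\sum_{v\in \cN^{(j)}}\frac{d^{(j,i)}_v}{|H_{j\rightarrow i}|}\sum_{l:(j,l)\,{\rm BAD}} d^{(j,l)}_v .
\end{align*}
This is the expected number of BAD half-edges at a vertex of $\cN^{(j)}$ chosen with probability proportional to its degree into $\cN^{(i)}$, i.e.\ at the vertex $W_n(i,j)$.

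For the first sum, fix a BAD pair $(i,j)$. Then $|H_{i\rightarrow j}|/N^{(i)}=\frac{1}{N^{(i)}}\sum_{x}d^{(i,j)}_x$. By \eqref{assump:regularity_1_2} with $r=1$, this converges to $\Exp{D_{(i,j)}}$, which is $0$ by the definition of BAD.

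For the second sum, fix a GOOD pair $(i,j)$ and a BAD pair $(j,l)$. Since $(i,j)$ GOOD implies $(j,i)$ GOOD, we have $l\neq i$. First, the denominator satisfies $|H_{j\rightarrow i}|/N^{(j)}\to \Exp{D_{(j,i)}}>0$ by Assumption \ref{assump:degree_regularity}. Next, I bound the numerator by Cauchy--Schwarz:
\begin{align*}
\frac{1}{N^{(j)}}\sum_{v\in\cN^{(j)}} d^{(j,i)}_v d^{(j,l)}_v\le \Big(\frac{1}{N^{(j)}}\sum_v (d^{(j,i)}_v)^2\Big)^{1/2}\Big(\frac{1}{N^{(j)}}\sum_v (d^{(j,l)}_v)^2\Big)^{1/2}.
\end{align*}
The first factor converges to $\Exp{D_{(j,i)}^2}^{1/2}<\infty$. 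The second factor converges to $\Exp{D_{(j,l)}^2}^{1/2}$ by \eqref{assump:regularity_1_2} with $r=2$. Because $D_{(j,l)}$ is a nonnegative integer variable with $\Exp{D_{(j,l)}}=0$, it equals $0$ almost surely, so $\Exp{D_{(j,l)}^2}=0$. Hence the numerator, divided by $N^{(j)}$, tends to $0$, while the denominator, divided by $N^{(j)}$, stays bounded away from $0$. Therefore the summand tends to $0$. The same argument covers the case $l=j$, where $d^{(j,j)}=d^{(j)}$.

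The only step needing care is this second-moment step. Convergence of first moments alone would not rule out the BAD degrees concentrating on high-$d^{(j,i)}$ vertices; this is exactly why the second-moment part of Assumption \ref{assump:degree_regularity} is used, rather than Assumption \ref{assump:offspring_degree_reg}. If the degree sequences are random, all the above convergences hold in probability, and the conclusion $\sS_n=o(1)$ is then read in probability.
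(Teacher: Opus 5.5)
Your proposal is correct and follows the paper's proof essentially step for step. The BAD-pair term vanishes by the $r=1$ part of \eqref{assump:regularity_1_2}. The cross term is handled by normalizing by $|H_{j\rightarrow i}|=\Theta(n)$ and applying Cauchy--Schwarz, where the $r=2$ part gives $\Exp{D_{(j,l)}^2}=0$ for BAD $(j,l)$ and keeps $\Exp{D_{(j,i)}^2}$ bounded; your reading of the degree notation in $\sS_n$ is also the intended one.
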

\begin{proof}
    Note that for any $\rm BAD$ pair $(i,j)$, $|H_{i \to j}|=\sum_{v \in \cN^{(i)}}d^{(j)}_v$, so that $|H_{i \to j}|=o(N^{(i)})$ by the definition of being $\rm BAD$. Thus the first term on the RHS of \eqref{eq:def_sn_goodbadseq} is $o(1)$.

    On the other hand, fix a $\rm GOOD$ pair $(i,j)$ and a $\rm BAD$ pair $(j,l)$, and consider the sum
    \begin{align*}
        \sum_{v \in \cN^{(j)}}\frac{d^{(i)}_vd^{(l)}_v}{|H_{j \rightarrow i}|}.
    \end{align*}
    Since $(i,j)$ is $\rm GOOD$, recall from \eqref{assump:regularity_1_2} and \eqref{eq:part_i_size} that we have $|H_{j \rightarrow i}|=\sum_{v \in \cN^{(j)}}d^{(i)}_v=\Theta(\cN^{(j)})=\Theta(n)$, and thus, the last display equals up to a non-zero multiplicative constant to $\Exp{d^{(i)}_od^{(l)}_o}$, where $o$ is a randomly sampled vertex from $\cN^{(j)}$. By Cauchy-Schwarz, we have 
    \begin{align*}
        \Exp{d^{(i)}_od^{(l)}_o}\leq \sqrt{\Exp{(d^{(i)}_o)^2}\Exp{(d^{(l)}_o)^2}},
    \end{align*}
    and note that $\Exp{(d^{(l)}_o)^2}=\frac{1}{N^{(j)}}\sum_{v \in \cN^{(j)}}(d^{(l)}_v)^2\to \Exp{D_{(j,l)}^2}=0$, where the limit follows from \eqref{assump:regularity_1_2} and the last equality holds since $\Exp{D_{(j,l)}}=0$ and since the variable $D_{(j,l)}$ is non-negative. On the other hand, arguing similarly, $\Exp{(d^{(i)}_o)^2} \to \Exp{D_{(j,i)}^2}<\infty$ due to \eqref{assump:regularity_1_2}. Thus, the second term on the RHS of \eqref{eq:def_sn_goodbadseq} is also $o(1)$.
\end{proof}

\begin{proof}[Proof of Theorem \ref{thm:loc_lim} by proving \eqref{eq:loc_conv_general_claim}]
    \textbf{Step I: Base case $m=1$.} In this case, we have to show that for a fixed rooted tree $(\bt,o)$ with vertex types in $S=[k]$ and for any $r\geq 1$,
    \begin{align*}
        \Prob{B_{\G_n}(o_n,r)\simeq_S (\bt,o)}\to \Prob{B_{\cT_\infty}(\varnothing,r)\simeq_S (\bt,o)}. \numberthis \label{eq:loc_to_show_basecase}
    \end{align*}
As mentioned before, we couple the breadth-first exploration started from $o_n$ in $\G_n$ with the growth of a branching process. As we shall see, by a simple consequence of Lemma \ref{lem:sn_small}, while trying to explore the graph from a uniform vertex $o_n$, for finitely many exploration steps, we will simply \emph{not encounter} a vertex with any BAD half-edge incident to it. Thus, for the sake of graph exploration, the edges in the GOOD configuration models $CM^{i,j}_n$ (where $(i,j)$ is a GOOD pair) are all that matter. 

To sample these edges, we use a standard technique that allows construction of the configuration model for pairing the half-edges \emph{as we are exploring} the graph $\G_n$. That is to say, instead of revealing all the configuration model edges in the graph $\G_n$ at once, we can start from $o_n$, and first pair the half-edges incident to it, then pair the half-edges incident to \emph{discovered} vertices at distance $1$ from $o_n$, etc., in a breadth-first manner, and if we stop this process after say discovering $M$ vertices (for some $M\geq 1$), the graph constructed so far is identically distributed to breadth-first exploration up to $M$ vertices starting from $o_n$ in $\G_n$. This \emph{pairing as we are exploring} idea goes back to at least Janson and Luczak \cite{janson2009new}.

Thus, we need to couple the process outlined above with a branching process growth, specifically, the branching process arising as the local limit in the theorem statement. We do this as follows. Start with $o_n$, and let the root $\varnothing$ of a branching process $\BP_n$ have type $i$ if $o_n \in \cN^{(i)}$. Naturally, on this event, the type of $o_n$ is also $i$. Denoting by $\G_n(o_n,t)$ and $BP_n(t)$ the graph exploration process and the coupled branching process after $t$ steps, we thus have $\G_n(o_n,0)=\{o_n\}$ and $BP_n(0)=\{\varnothing\}$.

For now, let us assume that the following holds \textbf{whp}:
\begin{align*}
    &\textrm{No BAD half-edges are incident to $o_n$, and there is no vertex in any finite radius}\\&\textrm{neighborhood about $o_n$ with a BAD half-edge incident to it.} \numberthis \label{eq:assump_BAD_HE}
\end{align*}
We will describe the coupling under the assumption \eqref{eq:assump_BAD_HE}, and later provide the proof that this is indeed satisfied.

Select an arbitrary half-edge $h$ incident to $o_n$. Say $h \in H_{i \rightarrow j}$, with $(i,j)$ being GOOD. Thus, to pair $h$, we need to sample uniformly a half-edge $h'$ from $H_{j \rightarrow i}$ and pair it with $h$; we do this, and discover an edge in $\G_n$ connecting $o_n$ and a vertex $v$ of type $j$, where $v \in \cN^{(j)}$ is the vertex to which $h'$ is incident. Note that by assumption \eqref{eq:assump_BAD_HE}, there are no BAD half-edges incident to $v$. For the coupling with $\BP_n$, we simply let $\varnothing$ have a child $v$ of type $j$. Thus $\G_n(o_n,1)$ consists of a graph with a single edge $\{o_n,v\}$, while $\BP_n(1)$ has also a single edge $\{\varnothing,v\}$. Let us also declare $H(1):=\{h,h'\}$ to be the set of \textit{used up} half-edges after the first step. In general, $H(t)$ denotes the set of used up half-edges after $t$ steps. 

Say we have constructed $\G_n(o_n,t)$, $BP_n(t)$ and $H(t)$ for some fixed $t \geq 1$. By assumption \eqref{eq:assump_BAD_HE} there are no half-edges incident to any vertex in $\G_n(o_n,t)$. We choose the next half-edge to be paired to construct $\G_n(o_n,t+1)$ in a \emph{breadth-first} manner. Precisely, take a vertex in $\G_n(o_n,t)$ with minimal distance from $o_n$ that has at least one unpaired half-edge incident to it, and choose that half-edge, which we denote by $h$, incident to some vertex $u\in \G_n(o_n,t)$. Now, to pair $h$, we need to choose from the remaining pool $H\setminus H(t)$ of half-edges that are compatible with $h$, i.e., from those that have not already been paired in the first $t$ steps.

The crucial idea of the coupling with $\BP_n(t)$ is the fact that we make already paired half-edges \emph{available} to be paired to $h$ for $\BP_n(t)$. That is to say, to pair $h$ in $\BP_n(t)$, we simply choose uniformly from the set of \emph{all} half-edges compatible to $h$. Let us sample such an half-edge $h'$, and say it is incident to $w\in \cN^{(l)}$. If $h'\in H(t)$ or $w\in \G_n(o_n,t)$, a \emph{conflict} arises, and the coupling breaks down immediately at this step, and we have only managed to couple $\G_n(o_n,k)$ and $\BP_n(k)$ for $k=0,\dots, t$; otherwise, we pair $h'$ with $h$ in $\G_n(o_n,t)$ to give rise to $\G_n(o_n,t+1)$, and let the vertex $u$ in $\BP_n(t)$ have a child $w$ of type $l$, to give rise to $\BP_n(t+1)$. 

Denoting 
\begin{align*}
    \tau_n:=\inf\{t\geq 0:\textrm{ there is a conflict at time }t\}, \numberthis \label{eq:stop_time_tau_n}
\end{align*}
we thus conclude that $\G_n(o_n,t)\simeq_S \BP_n(t)$ for all $t \leq \tau_n$. Now, consider a vertex $w \in \BP_n(t)$ with parent $u$, for some $t<\tau_n$. Let the types of $u$ and $w$ respectively be $j$ and $l$. What is the distribution of the total number of type $i$ children that $w$ can possibly have in $\BP_n(\tau_n)$?

Well, the fact that $u$ has type $j$ tells us that at some previous step, we sampled uniformly a half edge of type $H_{l \rightarrow j}$ (which happened to be incident to $w \in \cN^{(l)}$), and paired it with a half-edge from $H_{j \rightarrow l}$ incident to $u$. Thus, we need to find the degree in $\cN^{(j)}$ of a random vertex in $\cN^{(l)}$ sampled according to probability mass proportional to its degree in $\cN^{(i)}$. Recalling \eqref{eq:law_Dn_ijr} and Remark \ref{rem:assump}, this integer valued random variable has the same distribution as $D_{n,(i,l,j)}$ if $i\neq j$, and $\underline{D_{n,(i,l,i)}}$ if $i=j$. 

Thus, $w$ can have in law $D_{n,(i,l,j)}$ many children in $\BP_n(\tau_n)$ ($\underline{D_{n,(i,l,i)}}$ many if $i=j$), if, of course, we see no conflicts by the time we finish constructing the offspring set of $w$. Analogous to this observation, let us also note that the type of $\varnothing$ in $\BP_n(\tau_n)$ has distribution $T_{n,\varnothing}$, where we recall this variable from \eqref{eq:def_T_nphi}, and given $T_{n,\varnothing}$, the number of type $i$ children of $\varnothing$ in $\BP_n(\tau_n)$ (given that we do not see conflicts in constructing the offspring set of $\varnothing$) has distribution $D_{n,(T_\varnothing , i)}$, where we recall the variable $D_{n,(i,j)}$ from \eqref{eq:the_RV_D_nij}. 

These observations enable us to conclude that $\BP_n(\tau_n)$ has the same distribution as the breadth-first exploration up to $\tau_n$ steps of a 1-dependent branching process $\BP_n:=\BP(\bp_n,\bD_n,\Tilde{\bD}_n)$, where $\bp_n$ is the probability mass vector corresponding to the mass function of $T_{n,\varnothing}$, the $(i,j)$-th entry of the matrix $\bD_n$ is $D_{n(i, \rightarrow j)}$, and the $(i,j,k)$-th entry of the tensor $\Tilde{\bD}_n$ is $D_{n,(i,j,k)}$ for $k \neq i$ and $\underline{D_{n,(i,j,i)}}$ if $k=i$.

Now, if the branching process $\BP_n$ is such that the expected population size up to any finite generation from $\varnothing$, say up to the $r$-th generation, stays uniformly bounded in $n$, a breadth-first exploration of this process up to $\tau_n$ many steps completely reveals $B_{\BP_n}(\varnothing,r)$, as soon as $\tau_n \geq m_n\gg  1$, where $m_n$ is some diverging sequence (no matter how slowly), \textbf{whp}, as $n \to \infty$. In such a situation, we can safely write by the definition of $\tau_n$ and our coupling
\begin{align*}
    \Prob{B_{\G_n}(o_n,r)\simeq_S (\bt,o)}&=\Exp{\CProb{B_{\G_n}(o_n,r)\simeq_S (\bt,o)}{\tau_n\gg m_n}}+o(1)\\&=\Prob{B_{\BP_n}(\varnothing,r)\simeq_S (\bt,o)}+o(1)=\Prob{B_{\cT_\infty}(\varnothing,r)\simeq_S (\bt,o)}+o(1),
\end{align*}
where the final step follows by noting $r$ is a constant, by dominated convergence, using the assumptions \eqref{assump:regularity_1_2} and \eqref{eq:assump_regularity_tensor}, and the definition of $\BP_n$.

To finish the proof of the base case, it remains to check that $\tau_n\geq m_n\gg 1$ for some $m_n$, and that the expected total population size up to any finite generation of $\BP_n$ stays uniformly bounded in $n$. In proving the former, we in fact also verify \eqref{eq:assump_BAD_HE}. Let us continue with this task.

\noindent\paragraph{Divergence of $\tau_n$.}
For any vertex $v$, denote by ${\rm BAD}_v$ the set of all BAD half-edges incident to it. That is, if we assume without loss of generality $v\in \cN^{(j)}$,
\begin{align*}
    {\rm BAD}_v:=\cup_{(j,l) {\rm BAD}}H_{v,j\rightarrow l}.
\end{align*}
Define $\cE_0$ to be the event that $|{\rm BAD}_{o_n}|>0$. Further, for any $t\geq 1$, let us denote by $\cE_t$ the event that at step $t$ of the breadth-first exploration, we paired a half-edge $h$ incident to a vertex in $\G_n(o,t-1)$, to a uniformly sampled half-edge $h'$ compatible with $h$ that is incident to a vertex $v$ satisfying $|{\rm BAD}_v|>0$. At this point note that \eqref{eq:assump_BAD_HE} will be implied if we can show $\Prob{\cup_{0\leq s \leq t}\cE_s}=o(1)$ for any fixed $t\geq 0$. In fact we will prove a stronger statement where we can allow $t=t_n$ to diverge.  

To begin, note that the probability of $\cE_0$ is equivalently seen as the probability that $o_n$ is incident to a vertex $v$ such that $|{\rm BAD}_v|>0$. By definition, since $o_n$ is sampled uniformly, the probability this happens is at most
\begin{align*}
    \frac{1}{n}\sum_{i \in [k]}\sum_{v \in \cN^{(i)}}\sum_{j \in [k]}\ind{d^{(j)}_v>0}\ind{\Exp{D_{i\rightarrow j}}=0}&\leq \frac{1}{n}\sum_{i \in [k]}\sum_{v \in \cN^{(i)}}\sum_{j \in [k]}d^{(j)}_v\ind{\Exp{D_{i\rightarrow j}}=0}\\&\leq  \sum_{i,j\in [k]:(i,j){\rm BAD}}\frac{|H_{i \rightarrow j}|}{N^{(i)}}\leq \sS_n,
\end{align*}
where recall $\sS_n$ from \eqref{eq:def_sn_goodbadseq}. Thus, $\cE_0^c$ occurs with probability at least $(1-\sS_n)$.

Next, conditionally on the event $\cap_{s=0}^{t-1}\cE_s^c$, consider the probability of the event $\cE_t$. Let $h \in H_{i \rightarrow j}$ be the half-edge we are trying to pair at step $t$. First note that since we have conditioned on $\cap_{s=0}^{t-1}\cE_s^c$, the pair $(i,j)$ and thus also $(j,i)$ must be $\rm GOOD$. The probability that at this step we sample uniformly a half-edge $h' \in H_{j \rightarrow i}$ to pair to $h$ such that $h'$ is incident to a vertex $v\in \cN^{(j)}$ with $|{\rm BAD}_v|>0$ is at most (recall $\sS_n$ from \eqref{eq:def_sn_goodbadseq})
\begin{align*}
    \frac{1}{|H_{j\rightarrow i}|}\sum_{l \in [k]:(j,l) {\rm BAD}}\sum_{v \in \cN^{(j)}}d^{(i)}_v\ind{d^{(l)}_v>0}\leq \frac{1}{|H_{j\rightarrow i}|}\sum_{l \in [k]:(j,l) {\rm BAD}}\sum_{v \in \cN^{(j)}}d^{(i)}_vd^{(l)}_v\leq \sS_n,  
\end{align*}
since the pair $(i,j)$ is $\rm GOOD$. We conclude that 
\begin{align*}
    \Prob{\cap_{0\leq s\leq t}\cE_s^c}\geq (1-\sS_n)^t=1-o(1)
\end{align*}
for any $t=t_n$ satisfying $1\ll t_n\ll \sS_n^{-1}$, where note that $t_n$ can be chosen to be diverging (i.e., $t_n \gg 1$) due to Lemma \ref{lem:sn_small}. This checks \eqref{eq:assump_BAD_HE}.

In particular, defining the stopping time
\begin{align*}
    \xi_n:=\inf\{t\geq 0:\cE_t\textrm{ occurs }\},
\end{align*}
we have $\Prob{\xi_n>m_n}\to 1$ for any $1\ll m_n \ll \sS_n^{-1}$. 

Recall our aim is to show $\tau_n$ diverges. Armed with the last observation, to show $\tau_n$ diverges with high probability, it is enough to show that $\tau_n\wedge \xi_n$ diverges. For any $1\ll m_n \ll \sS_n^{-1}$ note that
\begin{align*}
    \Prob{\tau_n\wedge \xi_n<m_n}\leq (1+o(1))\CProb{\tau_n<m_n}{\xi_n>m_n}.\numberthis \label{eq:cond_UB_taun_xin}
\end{align*}

Note that for any $m_n>0$, $\CProb{\tau_n<m_n}{\xi_n>m_n}\leq \CExp{{\rm Conf}(m_n)}{\xi_n>m_n}$, where ${\rm Conf}(m_n)$ is the number of conflicts up to step $m_n$ in the coupling (recall the meaning of a conflict arising from before \eqref{eq:stop_time_tau_n}). We write
\begin{align*}
    \CExp{{\rm Conf}(m_n)}{\xi_n>m_n}=\sum_{s=1}^{m_n}\CProb{\{\textrm{Step }s\textrm{ sees a conflict}\}}{\xi_n>m_n}.
\end{align*}
Conditionally on $\xi_n>m_n$, for some $s\leq m_n$, consider a half-edge $h$ incident to $\G_n(o,s-1)$ that we must pair to obtain $\G_n(o,s)$. Let $h \in H_{i \rightarrow j}$. Since $\xi_n>m_n\geq s$, we observe that $(i,j)$ is $\rm GOOD$. Thus, since at each step of the coupling (before the coupling breaks down) we pair exactly one half-edge,
\begin{align*}
  &\CProb{\{\textrm{Step }s\textrm{ sees a conflict}\}}{\xi_n>m_n}\\&  \leq \frac{s-1}{|H_{j\rightarrow i}|}+\frac{(s-1)\Delta_n(i,j)}{|H_{j \rightarrow i}|}\\&\leq \frac{s-1}{\min_{{\rm GOOD}(i,j)}|H_{j\rightarrow i}|}+\frac{(s-1)\max_{i,j\in[k]}\Delta_n(i,j)}{\min_{{\rm GOOD}(i,j)}|H_{j \rightarrow i}|}, \numberthis \label{eq:expected_conflicts}
\end{align*}
where $\Delta_n(i,j)$ is the maximum degree a vertex in $\cN^{(i)}$ can have in $\cN^{(j)}$. To justify the last display, note that since at step $s$ of the coupling, to construct $BP(s)$ from $BP(s-1)$, we sample a half-edge uniformly at random from the set of all half-edges compatible to $h$, the first term above is an upper bound on the conditional probability, given $\xi_n>m_n$, that we sample a half-edge $h'$ that has already been paired in a previous step, while the second term is an upper bound on the conditional probability that we sample a half-edge that is incident to a vertex in $\cN^{(j)}$ we have already seen in a previous step. All in all, recalling by the definition of $\rm GOOD$, we have $\min_{{\rm GOOD} (i,j)}|H_{j \rightarrow i}|=\Omega(n)$, thanks to the $r=1$ assumption in \eqref{assump:regularity_1_2}, we obtain the overall upper bound,
\begin{align*}
    \CExp{{\rm Conf}(m_n)}{\xi_n>m_n}\leq \sum_{s=1}^{m_n}\left(\frac{s-1}{\Omega(n)}+\frac{(s-1)\Delta_n}{\Omega(n)}\right),
\end{align*}
where the constants hidden by the $\Omega$ terms in the above display are uniform over $s$, and $\Delta_n$ is the maximum degree of $G_n$ (thus also of $\G_n$).

The RHS of the last display is $o(1)$ as soon as $m_n=o(\sqrt{n/\Delta_n})$. Further, even with this restriction on $m_n$, let us agree that we can in fact choose $m_n\gg 1$, since $\Delta_n=o(n)$, following by the $r=2$ assumption in \eqref{assump:regularity_1_2}. Recalling \eqref{eq:cond_UB_taun_xin}, this shows that $\Prob{\tau_n\wedge \xi_n < m_n}=o(1)$ whenever $m_n\gg 1$ is such that $m_n=o\left(\sqrt{n/\Delta_n}\wedge \sS_n^{-1} \right)$. We conclude that the sequence $\tau_n$ diverges with high probability.

\noindent\paragraph{Bounded expected population up to finite generations.} Finally, to conclude the base case, it remains to check that the expected population size up to any finite generation of $\BP_n$ -- say till the $r$-th generation -- stays uniformly bounded. Note that for $M\geq 2$, given the $M$-th generation of $\BP_n$ has in expectation $a(j,i)$ many type $i$ children with a parent of type $j$, for $i,j \in S=[k]$, the expected size of generation $(M+1)$, by an application of Wald's identity, is 
\begin{align*}
    \sum_{i,j\in [k]}a(j,i)\sum_{l\in [k]}\Exp{D_{n,(j,i,l)}},
\end{align*}
where by $D_{n,(j,i,l)}$ let us denote the $(j,i,l)$-th entry of the tensor $\Tilde{\bD}_n$. Thus, given that the expected size of the $M$-th generation stays uniformly bounded in $n$, the above relation shows that the expected size of the $(M+1)$-th generation size stays uniformly bounded in $n$, thanks to assumptions \eqref{assump:regularity_1_2} and \eqref{eq:assump_regularity_tensor}. The check follows by induction together with observing that the expected size of the first generation is
\begin{align*}
    \sum_{i \in k}\frac{N^{(i)}}{n}\sum_{j \in [k]}\Exp{D_n(i\rightarrow j)},
\end{align*}
which is uniformly bounded in $n$, thanks to \eqref{eq:part_i_size} and \eqref{assump:regularity_1_2}.
\medskip

\noindent \textbf{Step II: Induction step.} 
After settling the base case, we now prove the induction step. Recall $o_{n,1},\dots,o_{n,m}$ are uniformly sampled vertices of $\G_n$. Let us write $o_{n,i}=o_i$ for $1\leq i \leq m$ for simplicity.

Let us in fact prove the following stronger statement which clearly implies the result: if we explore the graph $\G_n$ from $o_1,o_2,\dots,o_m$ while pairing half-edges (as discussed in the earlier part of the proof) in a breadth-first manner, up to $m_n\ll\sqrt{n/\Delta_n}\wedge \sS_n^{-1}$ many steps, in some order on these vertices -- say the exploration from $o_1$ first, then from $o_2$, etc. -- then these explorations, call them respectively $\G_n(o_1,m_n),\dots,\G_n(o_m,m_n)$, do not see the pairing of any half-edge incident to a vertex with $\rm BAD$ half-edges incident to it, and can be respectively coupled to be identical with breadth-first exploration up to $m_n$ steps on $\BP^{(1)}_n,\dots,\BP^{(k)}_n$, where the latter are \textit{independent} branching processes, all of whose offspring laws converge to that of $\cT_{\infty}$ as $n \to \infty$.

Indeed, in the first part of the proof, we in fact proved the base case of $m=1$ for this stronger statement. To prove the general statement, assume the result is true for $m-1$ vertices, and we want to prove it for $m$ vertices. Let $H(m_n)$ and $V(m_n)$ respectively be the set of half-edges already paired, and the set of vertices with at least one unpaired half-edge incident to it, after having constructed $\G_n(o_1,m_n),\dots,\G_n(o_{m-1},m_n)$. First note that the probability $o_m$ is not in $V(m_n)$ is at most $(m-1)m_n/n=o(1)$. Thus, we can condition on the complement of this event, and assume $o_m$ is in fact uniformly distributed on $V(m_n)$. For each vertex $v\in V(m_n)\cap \cN^{(j)}$ of type $j$, let $d^{(i),m_n}_v$ denote the number of unpaired half-edges from $H_{j \rightarrow i}$ incident to it, after having constructed $\G_n(o_1,m_n),\dots \G_n(o_{m-1},m_n)$.

The crucial observation is that by the sampling of $\G_n$ (Corollary \ref{cor:sample_Gn}), conditionally on $H(m_n)$ and $V(m_n)$, the breadth-first exploration from a random vertex in $V(m_n)$ has the \textit{same} law as the breadth-first exploration from a random vertex in a smaller graph $\G_n'$, constructed as follows:
\begin{itemize}
    \item For any BAD pair $(i,j)$, we let $G'_{i,j}$ to be a uniform simple random graph with degree sequence $\bd_i(n)$ if $i=j$ and with degree sequence $\bd_{(i,j)}(n)$ if $i \neq j$, as usual.
    \item If $(i,j)$ is good, we let $G'_{i,j}$ to be a \emph{smaller} configuration model ${\tilde CM}^{i,j}_n$ on $\left(\cN^{(i)}\cap V(m_n)\right)\cup \left(\cN^{(j)}\cap V(m_n)\right)$ with bi-degree sequence $(\bd_{i \rightarrow j}'(n),\bd_{j \rightarrow i}'(n))$ if $i \neq j$ and with degree sequence $\bd'_i(n)$ if $i=j$, where:
    \begin{itemize}
        \item for any $i\neq j$, the entry corresponding to $v\in \cN^{(j)}\cap V(m_n)$ in $\bd_{j \rightarrow i}'(n)$ equals $d_v^{(i),m_n}$.
        \item for any $i$, the entry corresponding to $v\in \cN^{(i)}\cap V(m_n)$ in $\bd'_{i}(n)$ equals $d_v^{(i),m_n}$.
    \end{itemize}
    \item Declare $\G'_n:=\cG(G'_{i,j}:i,j\in [k])$.
\end{itemize}

Since $V(m_n)$ differs from $V$ in at most $O(m_n)=o(n)$ many vertices, and taking the difference $H_{i \rightarrow j}\setminus H(m_n)$ removes at most $O(m_n)=o(n)$ many half-edges from $H_{i \rightarrow j}$ for each $i,j \in [k]$, it is straightforward to check that the degree sequences $\bd'_{i \rightarrow j}(n)$ and $\bd'_i(n)$, for all $i\neq j$, $i,j\in [k]$, satisfies the assumptions \eqref{eq:part_i_size}, \eqref{assump:regularity_1_2} and \eqref{eq:assump_regularity_tensor}. Additionally, since we have not touched a $\rm BAD$ half-edge in constructing $\G_n(o_1,m_n),\dots, \G_n(o_{m-1},m_n)$, if we denote by $\sS'_n$ the analogue of the sequence $\sS_n$ (recall it from \eqref{eq:def_sn_goodbadseq}) for the graph $\G_n'$, by the same arguments as above we have $\sS'_n=(1+o(1))\sS_n$. 

In particular, by the first part of the proof, breadth-first exploration from a random vertex $o_m$ in $\G_n'$ up to $m_n$ steps can be coupled to be identical with breadth-first exploration up to these many steps in a branching process $\BP^{(m)}_n$, whose offspring laws converge to that of $\cT_\infty$, provided 
\begin{align*}
    m_n=o\left(\sqrt{|V(m_n)|/\Delta'_n}\wedge (\sS'_n)^{-1}\right)=o\left(\sqrt{| V(m_n)|/\Delta'_n}\wedge \sS_n^{-1}\right),
\end{align*}
where $\Delta'_n$ is the maximum degree (over $i,j\in [k]$, $i \neq j$) from the degree sequences $\bd'_{i \rightarrow j}(n), \bd'_i(n)$. Note that since $\Delta'_n\leq \Delta_n$ and $| V(m_n)|=n(1+o(1))$, $m_n$ clearly satisfies the last condition by recalling $m_n=o(\sqrt{n/\Delta_n})$. Finally, the fact that $\BP^{(m)}_n$ is independent from $\BP^{(1)}_n,\dots,\BP^{(m-1)}_n$ follows by simply noticing that the former processes have to deal with half-edges in $H(m_n)$ and vertices in $V\setminus V(m_n)$, that play no role in constructing $\BP^{(m)}_n$. This concludes the proof of Theorem \ref{thm:loc_lim}. 
\end{proof}
\section{Survival of local limit: proof of Theorem \ref{thm:extinct_1dep}}\label{sec:survival}
Recall the branching process $\BP(\bp,\bD,\tilde{\bD})$ from Definition \ref{def:BP}. We develop the necessary concepts to prove Theorem \ref{thm:extinct_1dep} and give its proof in this section. This section is technical, and we ask the reader to keep the following informal proof idea in mind while reading.

\paragraph{Proof idea.} If the tree $\BP(\bp,\bD,\Tilde{\bD})$ survives with positive probability, this means there must exist an infinite sequence of types $\mathbf{s}=(s_0,s_1,s_2,\dots)$ such that the root has type $s_0$, and has a child of type $s_1$, which has a child of type $s_2$, etc. Because the type space is finite, by a pigeonhole principle argument, there must exist some `loop' of the form $(s_i,s_{i+1},\dots,s_{i+l}=s_i)$ with no repeated types, that recurs infinitely often in $\mathbf{s}$. We can cut out the intermediate parts of the sequence $\mathbf{s}$ to obtain a new sequence $\mathbf{s}'$ such that:
\begin{itemize}
    \item It has an initial part that agrees with $\mathbf{s}$, namely, the part of $\mathbf{s}$ starting from $s_0$ up until the first occurrence of the loop $(s_i,s_{i+1},\dots,s_{i+l}=s_i)$.
    \item After that, the loop simply repeats itself infinitely often.
\end{itemize}
As described in Section \ref{sec:modifying_subtrees}, via this `cutting' operation to obtain $\mathbf{s}'$ from $\mathbf{s}$, we construct a random tree out of the original tree $\BP(\bp,\bD,\Tilde{\bD})$, such that the latter is infinite, and such that all vertices in generation $i$ of it have type given by the $i$-th element of $\mathbf{s}'$. The latter is essentially a Bienaymé-Galton-Watson (BGW) tree, whose survival probability is classical. In particular, via the mentioned construction, we show that if $\BP(\bp,\bD,\Tilde{\bD})$ survives, then there exists a related BGW tree that must also survive, and this immediately gives rise to the negation of \eqref{eq:ext_BP_condition}. This proves the `only if' direction in Theorem \ref{thm:extinct_1dep}.

The `if' direction is much more straightforward as we shall see; as soon as there is a loop $c$ with $\sM(c)>1$, we can sequentially follow the types in this loop and extract a BGW subtree out of $\BP(\bp,\bD,\Tilde{\bD})$ with mean offspring number $\sM(c)>1$, so that this subtree survives and thus so does $\BP(\bp,\bD,\Tilde{\bD})$.

\medskip

To formalize the above idea, we begin by setting up some useful notation. Consider any rooted infinite tree $\T$ with root $\varnothing$, where each vertex $v \in \T$ has a type $T_v \in S$. We further label each vertex in $\T$ by their type. Even though we develop the following notations for a general such tree, we will apply them to the case $\T=\BP(\bp,\bD,\Tilde{\bD})$, an equality we implicitly carry in mind. We think of the edges in $\T$ as \textit{oriented} from parent to child, of the form $e=(u,v)$, where $u$ is the parent of $v$ in $\T$. Naturally, if respectively $T_u,T_v\in S$ be the types of $u$ and $v$, the \emph{type} of such an edge $e=(u,v)$ is the tuple $T(e)=(T_{u},T_v)\in S \times S$. 

\begin{definition}[Type forest, type rays and paths]
    Consider the \emph{Ulam-Harris forest} $\mathscr{F}_S$ on the set of types $S$. That is to say, $\sF_S$ is an infinite forest, consisting of $|S|$ many infinite tree components $\{\sT(s):s\in S\}$. For any fixed $s_0 \in S$, the root of the tree $\sT(s_0)$ is $s_0$, and for any $v\in \sT(s_0)$, it has $|S|$ many offspring $\{vs:s\in S\}$. Observe that any $v\in \sT(s)$ has the form of a word $v=s_0s_1\dots s_k$. We call the forest $\sF_S$ the \textit{type forest} corresponding to the set of types $S$. 
    
    Furthermore any directed infinite path $\pi=(s_0\rightarrow s_0s_1 \rightarrow s_0s_1s_2\rightarrow \dots)$ starting at the root $s_0$ is called a \emph{type ray} in $\sT(s_0)$, and a finite directed path of the form $(s_0\rightarrow s_0s_1 \rightarrow \dots \rightarrow s_0s_1s_2\dots s_k)$ is called a \emph{type path} started at $s_0$. By $|\pi|$ we denote the length of $\pi$. Thus, if $\pi$ is a type ray, $|\pi|=\infty$, while for $\pi=(s_0\rightarrow s_0s_1 \rightarrow \dots \rightarrow s_0s_1\dots s_k)$, $|\pi|=k$.
\end{definition}

Next, let us define two useful ways to cut a type ray/path.

\begin{definition}[Cutting type rays and paths]\label{def:cuts_rays_paths}
    For any type path or ray $\pi=(s_0\rightarrow s_0s_1 \rightarrow s_0s_1s_2 \rightarrow \dots)$ with length $|\pi|$, and for any $1\leq i \leq |\pi|-1$, we define
    \begin{align*}
        &\pi^{(i)}:=(s_i\rightarrow s_is_{i+1}\rightarrow \dots)\textrm{ if }\pi\textrm{ is an infinite type ray; }  \\&\pi^{(i)}:=(s_i\rightarrow s_is_{i+1}\rightarrow \dots \rightarrow s_is_{i+1}\dots s_k)\textrm{ if }\pi\textrm{ is finite and }|\pi|=k.\numberthis \label{eq:def_pi_uppercut}
    \end{align*}
Similarly, for any $0\leq i\leq |\pi|$, we define the path $\pi_{(i)}$ as
\begin{align*}
    \pi_{(i)}:=(s_0\rightarrow s_0s_1 \rightarrow \dots \rightarrow s_0s_1\dots s_{i}).\numberthis \label{eq:def_pi_lowercut}
\end{align*}
    
\end{definition}

Let us next define special sets of descendants of vertices in $\T$ that will be useful. For any $v \in \T$ and $s\in S$, denote by $\bc_\T(v;s)$ the set of all type $s$ children of $v$, where $\bc_\T(v;s)=\emptyset$ if no such child exists.

\begin{definition}[Type path descendants]
    Consider any directed edge $e=(p(v),v)$ in $\T$ where $v\neq \varnothing$, with type $T(e)=(T_{p(v)},T_v)$. Let $s_0=T_{p(v)}$ and $s_1=T_v$, and for a type path $\pi=(s_0\rightarrow s_0s_1 \rightarrow \dots \rightarrow s_0s_1\dots s_k)$ in $\sT(T_{p(v)})$, we inductively define a set $C(e;\pi)$ of descendants of $v$ as follows. If $k=2$ then $C(e;\pi)=\bc_\T(v;s_2)$, while if $k>2$, then $$C(e;\pi)=\cup_{u\in \bc_\T(v;s_2)}C((v,u);\pi^{(1)}),$$ where recall $\pi^{(1)}$ from \eqref{eq:def_pi_uppercut}.
\end{definition}

In words, $C(e;\pi)$ is the set of all type $s_k$ children, of the set of all type $s_{k-1}$ children, \dots , of the set of all type $s_2$ children, of $v$. In the above definition, we use the convention that the union over an empty set is empty.

Certain special classes of type paths will be useful for us, which we define next. 
\begin{definition}[Type circuits]\label{def:type_circuits}
    Given a directed edge $e=(u,v)$ in $\T$ with type $T(e)=(T_u,T_v)=(s_0,s_1)$, a type path $\pi=(s_0\rightarrow s_0s_1 \rightarrow \dots \rightarrow s_0s_1\dots s_k)$ is called a \emph{type circuit} for $e$, if $s_{k-1}=s_0$ and $s_k=s_1$. Define $\sX(e)$ to be the set of all type circuits for the edge $e$.
\end{definition}

Now, consider the process $\BP(\bp,\bD,\Tilde{\bD})$. For any $s_0\in S$ and a path of the form $\pi=(s_0\rightarrow s_0s_1 \rightarrow \dots \rightarrow s_0s_1\dots s_k)$ in $\sT(s_0)$, where $k\geq 2$, given the tensor $\Tilde{\bD}$, we define an integer valued random variable $\sD(\pi)$ inductively as follows. Recall $\pi_{(i)}$ from \eqref{eq:def_pi_lowercut}. For $|\pi|=k=2$ we define $\sD(\pi):=D_{(s_0,s_1,s_2)}$, and for any $k=|\pi|>2$ we inductively define
\begin{align*}
\sD(\pi):=\sum_{i=1}^{\sD(\pi_{(|\pi|-1)})}D^{(i)}_{(s_{k-2},s_{k-1},s_k)}, \numberthis \label{eq:def_offspring_circuit}
\end{align*}
where $\{D^{(i)}_{(s_{k-2},s_{k-1},s_k)}:i\geq 1\}$ is an infinite sequence of i.i.d. copies of the variable $D_{(s_{k-2},s_{k-1},s_k)}$, and the random number $\sD(\pi_{(|\pi|-1)})$ of summands in the above sum is independent of the summands themselves. Observe that for any $u,v\in \BP(\bp,\bD,\Tilde{\bD})$ with $u=p(v)$ and $T(e)=T(u,v)=(s_0,s_1)$, and with $\pi$ as above,
\begin{align*}
  |C(e;\pi)|\stackrel{d}{=} \sD(\pi),\textrm{ and }  \Exp{|C(e;\pi)|}=\Exp{\sD(\pi)}=\prod_{i=0}^{k-2}\Exp{D_{(s_i,s_{i+1},s_{i+2})}}, \numberthis \label{eq:expected_offspring_endofpath}
\end{align*}
where for the second equality above we have repeatedly applied Wald's identity. The last quantity is to be contrasted with $\sM(c)$ as defined in \eqref{eq:loop_product}. Let us continue by stating the main result of this section.

\begin{proposition}[BGW subtrees in $\BP(\bp,\bD,\Tilde{\bD})$]\label{prop:GW_subtrees}
    Consider the branching process $\BP(\bp,\bD,\Tilde{\bD})$. Let $v\neq \varnothing$ in $\BP(\bp,\bD,\Tilde{\bD})$, and let $e:=(p(v),v)$ with $T(e)=(s_0,s_1)$ where $s_0,s_1 \in S$. Consider any type circuit $\pi=(s_0\rightarrow s_0s_1 \rightarrow \dots \rightarrow s_0\dots s_k)$. Define $Z_0:=\{v\}$, and inductively, given $Z_m$, define $$Z_{m+1}:=\cup_{u \in Z_m}C((p(u),u);\pi).$$ Then the $Z_m$ form the set of individuals in the $m$-th generation of a BGW tree which we denote by $\BP_{v,\pi}$. Furthermore, the number of offspring of any vertex in $\BP_{v,\pi}$ is given by $\sD(\pi)$ as defined in \eqref{eq:def_offspring_circuit}.
\end{proposition}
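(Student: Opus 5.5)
The plan is to verify directly the two defining properties of a BGW tree. First, every $u\in Z_m$ has the same parent-child edge type $(s_0,s_1)$, so the offspring rule $C((p(u),u);\pi)$ applied to it is well defined and has the same law for every individual. Second, the offspring sets of distinct individuals in the same generation are disjoint, independent of one another, and independent of everything revealed in earlier generations. The whole argument rests on the branching (Markov) property of $\BP(\bp,\bD,\Tilde{\bD})$ in Definition \ref{def:BP}. Given that a non-root vertex $u$ has type $T_u$ and its parent has type $T_{p(u)}$, the subtree rooted at $u$ depends only on the pair $(T_{p(u)},T_u)$. Moreover, it is conditionally independent of the rest of the tree, and in particular of every subtree rooted at a vertex that is not a descendant of $u$.

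\textbf{Step 1 (type consistency).} Induct on $m$ to show that every $u\in Z_m$ satisfies $T(p(u),u)=(s_0,s_1)$. The case $m=0$ holds because $e=(p(v),v)$ has type $(s_0,s_1)$. For the inductive step, the definition of $C(e;\pi)$ shows that each element of $C((p(u),u);\pi)$ is a type-$s_k$ child of a type-$s_{k-1}$ vertex. Since $\pi$ is a type circuit (Definition \ref{def:type_circuits}), we have $s_{k-1}=s_0$ and $s_k=s_1$. Hence every new individual again sits at the bottom of an edge of type $(s_0,s_1)$, and the same circuit $\pi$ applies to it.

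\textbf{Step 2 (offspring law, disjointness, independence).} Disjointness is simple. For $u\neq u'$ in $Z_m$, neither vertex is a descendant of the other, because all elements of $Z_m$ lie at the same depth $m(k-1)$ below $v$. Their descendant sets are therefore disjoint, and so are $C((p(u),u);\pi)$ and $C((p(u'),u');\pi)$. For independence, let $\mathcal F_m$ be the sigma-field generated by the tree explored down to the vertices of $Z_m$, excluding their descendants. Conditionally on $\mathcal F_m$, the subtrees rooted at the $u\in Z_m$ are i.i.d. copies of a $\BP$ subtree started from an edge of type $(s_0,s_1)$. The set $C((p(u),u);\pi)$ is a measurable function of the subtree at $u$, so the offspring counts $|C((p(u),u);\pi)|$ are i.i.d. given $\mathcal F_m$, and their law does not depend on $\mathcal F_m$. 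The induction hypothesis places $Z_m$ inside $\mathcal F_m$, which gives the Galton–Watson property. To identify the law, compute $|C(e;\pi)|$ by induction on $|\pi|$, using the recursive definition $C(e;\pi)=\cup_{w\in \bc_\T(v;s_2)}C((v,w);\pi^{(1)})$. Reorganise it level by level: the number of type-$s_j$ vertices at level $j-1$ equals a sum, over the type-$s_{j-1}$ vertices at level $j-2$, of independent $D_{(s_{j-2},s_{j-1},s_j)}$ copies. This is exactly the recursion \eqref{eq:def_offspring_circuit}, so $|C(e;\pi)|\stackrel{d}{=}\sD(\pi)$, matching \eqref{eq:expected_offspring_endofpath}.

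\textbf{Main obstacle.} The delicate point is making the conditional independence in Step 2 rigorous. It requires a precise formulation of the strong branching property for a stopping-line–type set of vertices such as $Z_m$, because membership in $Z_m$ depends on the types along the path from $v$ to each vertex. My first approach would be to use the Ulam–Harris labelling. Enumerate all potential vertices at depth $m(k-1)$ below $v$, and condition on the $\sigma$-field of types and offspring counts of vertices at depth below $m(k-1)$. The event that a given label belongs to $Z_m$ is measurable with respect to this $\sigma$-field. The offspring variables of vertices at deeper levels are then independent of this $\sigma$-field given the parent–child types, by Definition \ref{def:BP}, which closes the argument.
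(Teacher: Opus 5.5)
Your proposal is correct and follows the same route as the paper: type consistency from the circuit condition $s_{k-1}=s_0$, $s_k=s_1$, the offspring law $\sD(\pi)$ from the level-by-level recursion behind \eqref{eq:expected_offspring_endofpath}, and independence from the branching property combined with disjointness of the descendant sets. The paper argues the last step via pairwise disjointness of the induced subtrees, whereas you condition on the tree down to depth $m(k-1)$ below $v$. Your version is the more careful of the two, because it yields offspring counts that are i.i.d.\ conditionally on the earlier generations, which is exactly the Galton--Watson property.
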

\begin{proof}
Since $\pi$ is a type circuit, for any vertex $a \in \BP_{v,\pi}$ we have $T(p(a),a)=(s_0,s_1)$. Hence, by the observation \eqref{eq:expected_offspring_endofpath}, it is clear that for any two vertices $a_1,a_2 \in \BP_{v,\pi}$, their number of offspring in $\BP_{v,\pi}$ are identically distributed and has the distribution $\sD(\pi)$. Finally, note that from the definition of the process $\BP(\bp,\bD,\Tilde{\bD})$, for any two vertices $u,v \neq \varnothing$ in this process satisfying $T(p(u),u)=T(p(v),v)=(s_0,s_1)$, the numbers $|C((p(u),u);\pi)|$ and $|C((p(v),v);\pi)|$ are independent as long as 
\begin{align*}
    \left(\cup_{i=2}^{|\pi|}C((p(u),u);\pi_{(i)}) \right)\cap \left(\cup_{i=2}^{|\pi|}C((p(v),v);\pi_{(i)}) \right) =\emptyset.
\end{align*}
From the definition of the generations $(Z_m)_{m \geq 0}$, the only way that for any $u,v\in \cup_{m \geq 0}Z_m=\BP_{v,\pi}$ the above intersection is non-empty, is $u=v$. This proves that the number of offspring of distinct individuals in $\BP_{v,\pi}$ are independent. 
\end{proof}

The set of individuals $\left(\cup_{i=2}^{|\pi|}C((p(u),u);\pi_{(i)}) \right)$ that we encountered in the previous argument is special, and we gather it in a definition.

\begin{definition}[Type path/ray induced subtrees]\label{def:type_path_induced}
   Fix $s_0,s_1 \in S$. Consider any rooted tree $\T$ with vertex types in $S$. For any vertex $v \in \T$ with $T_{p(v)}=s_0, T_v=s_1$, and a type path/ray $\pi=(s_0\rightarrow s_0s_1 \rightarrow s_0s_1s_2\rightarrow \dots)$, we define the subtree $\T(v;\pi)$ of $\T$ to be spanned by the vertices
    \begin{align*}
        \{v\}\cup \left(\cup_{i=2}^{|\pi|}C(e;\pi_{(i)})  \right),
    \end{align*}
    where $e$ is the directed edge $(p(v),v)$. We naturally think of $\T(v;\pi)$ to be a rooted tree, rooted at $v$. We call $\T(v;\pi)$ the subtree of $v$ in $\T$ induced by $\pi$.
\end{definition}

\begin{remark}
Observe that when $\pi$ is a type ray, $\T(v;\pi)$ can be infinite, if for example $\T$ is so.
\end{remark}

\subsection{Modifying subtrees induced by type rays to obtain BGW trees }\label{sec:modifying_subtrees}

In this section we develop techniques to modify certain subtrees of $\BP(\bp,\bD,\Tilde{\bD})$ to obtain exact BGW trees from them. We need to develop some definitions first. Consider an infinite type ray $\pi=s_0\rightarrow s_0s_1 \rightarrow \dots$, any rooted tree $\T$ (with vertex types from $S$) and for some $v \in \T$ with $p(v)=u$, $T_{u}=s_0$ and $T_v=s_1$. Recall the subtree $\T(v;\pi)$ induced by $\pi$. Consider further collections of positive integers $\{M_i: i \in I\}$ and $\{N_i:i\in I\}$ where $I$ is some countable index set, satisfying 
\begin{align*}
&M_i<N_i \textrm{ for all }i\in I,[M_i,N_i]\cap[M_j,N_j]=\emptyset\textrm{ for all }i,j\in I, i\neq j,\\&\textrm{ with }s_{M_i+j}=s_{N_i+j}\textrm{ for all }i\geq 1\textrm{ and }j\in\{-1,0,1\},\numberthis \label{eq:integer interval condition}
\end{align*}
where for positive integers $M<N$ we define $[M,N]:=\{M,M+1,\dots,N\}$. For any fixed $i \in I$, corresponding to the pair $(M_i,N_i)$, we define a transformation of $\pi$ via a map $\tau_i$ as,
\begin{align*}
    \tau_i(\pi):=s_0\rightarrow s_0s_1\rightarrow \dots \rightarrow s_0\dots s_{M_i-1}\rightarrow s_0\dots s_{M_i-1}s_{N_i}\rightarrow s_0\dots s_{M_i-1}s_{N_i}s_{N_i+1}\rightarrow \dots. \numberthis \label{eq:tau_i_map}
\end{align*}
In words, to construct $\tau_i(\pi)$, we take $\pi_{(M_i-1)}=s_0\rightarrow s_0s_1\rightarrow \dots \rightarrow s_0\dots s_{M_i-1}$, and attach (or concatenate) to the end vertex $s_0\dots s_{M_i-1}$ of this finite path the infinite ray $\pi^{(N_i)}=s_{N_i} \rightarrow s_{N_i}s_{N_i+1}\rightarrow \cdots$.

Fix some enumeration of the set $I$ as $I=\{i_1,i_2,\dots\}$, and let us also define the ray $\tau_\infty(\pi)$ by
\begin{align*}
  \tau_\infty(\pi):= \lim_{N \to \infty}\tau_{i_N}(\tau_{i_{N-1}}(\dots\tau_{i_2}(\tau_{i_1}(\pi))\dots))= \cdots \tau_{i_3}(\tau_{i_2}(\tau_{i_1}(\pi))) \cdots. \numberthis \label{eq:operator_tau_infty}
\end{align*}
In the last display, it is intuitively clear what the limit means -- we are simply composing functions sequentially. It is possible to define a topology and make rigorous sense of the limit above, but we choose to avoid these technicalities as they are not very illuminating for our purposes.

Since the sets $[M_i,N_i]$ and $[M_j,N_j]$ do not intersect for different values of $i$ and $j$, it should be clear that the definition of $\tau_\infty(\pi)$ does not depend on the sequence of compositions, or equivalently, on the specific enumeration of $I$ that we choose. That is to say, for any bijection $\sigma:\N \to \N$ we have the equality
\begin{align*}
    \tau_\infty(\pi)= \lim_{N \to \infty}\tau_{\sigma(i_N)}(\tau_{\sigma(i_{N-1})}(\dots\tau_{\sigma(i_2)}(\tau_{\sigma(i_1)}(\pi))\dots)). \numberthis \label{eq:ray_perm_inv}
\end{align*}

When $I$ is finite, say $I=\{i_1,\dots,i_l\}$, our (natural) convention is $\tau_{i_N}(\tau_{i_{N-1}}(\dots\tau_{i_2}(\tau_{i_1}(\pi))\dots))=\tau_{i_l}(\tau_{i_{l-1}}(\dots\tau_{i_2}(\tau_{i_1}(\pi))\dots))$ for all $N\geq l$, so that the above limit is essentially over a constant sequence.

Before proceeding, given a type ray or path $\pi=(s_0\rightarrow s_0s_2\rightarrow \dots)$, let us denote by $\BP_\pi$ a branching process grown as follows: the root of this process has type $s_1$, and for $m\geq 1$ every individual in the $m$-th generation gives birth to $D_{(s_{m-1},s_m,s_{m+1})}$ many offspring of type $s_{m+1}$. We label the individuals by their types in $\BP_\pi$. Thus, note that for a vertex $v \in \BP(\bp,\bD,\Tilde{\bD})$ with $T(u,v)=(s_0,s_1)$ where $p(v)=u$, taking $\T=\BP(\bp,\bD,\Tilde{\bD})$, $\BP_\pi$ has the same distribution as $\T(v;\pi)$.

\paragraph{Constructing $\psi_i(\T(v;\pi))$.} Consider any (random or deterministic) rooted tree $\T$ with vertex types in $S$, and a vertex $v \in \T$ with $T(u,v)=(s_0,s_1)$ where $p(v)=u$. Recall the sets $[M_i,N_i]$ satisfying \eqref{eq:integer interval condition}.

\begin{definition}[Definition of $\psi_i(\T(v;\pi))$]\label{def:psi}
For any fixed $i \in I$, corresponding to $M_i,N_i$, and corresponding to the tree $\T(v;\pi)$ induced by the ray $\pi$, we want to construct a (random) map $\psi_i$ such that $\psi_i(\T(v;\pi))$ is another (random) tree, defined as follows:
    \begin{itemize}
    \item If $C(e;\pi_{(M_i+1)})=C(e;s_0\rightarrow s_0s_1 \rightarrow \dots \rightarrow s_0s_1\dots s_{M_i+1})=\emptyset$, declare $\psi_i(\T(v;\pi))=\T(v;\pi)$.
    \item Otherwise, if $C(e;\pi_{(M_i+1)})\neq \emptyset$, there is an $x \in C(e;\pi_{(M_i+1)})$. Let $p(x)$ be its parent. Note that $x$ is a leaf at height $M_i$ in the tree $\T(v;\pi_{(M_i+1)})$. Consider the following set of descendants of $x$ in $\T(v;\pi)$,
    \begin{align*}
    &C((p(x),x);(\pi_{(N_i+1)})^{(M_i)})\\&=C((p(x),x);s_{M_i}\rightarrow s_{M_i}s_{M_i+1}\rightarrow \dots \rightarrow s_{M_i}s_{M_i+1}\dots s_{N_i}s_{N_i+1}).    
    \end{align*}
    If $C((p(x),x);(\pi_{(N_i+1)})^{(M_i)})=\emptyset$ for all such $x$ in $\T(v;\pi_{(M_i+1)})$, then drop i.i.d. copies of the tree $\BP_{\pi^{(N_i)}}$, independent from $\T$, from all such leaves $x$ at distance $M_i$ in the tree $\T(v;\pi_{M_i+1})$. This just means, for each such leaf $x$, we take a copy of the tree $\BP_{\pi^{(N_i)}}$, and identify the root of this tree with $x$ (note that the type $T_x$ of any such leaf $x$ is $s_{M_i+1}=s_{N_i+1}$, so that we are identifying two individuals of the same type, even though they live in independent processes). Declare the resulting tree to be $\psi_i(\T(v;\pi))$.  
    \item Otherwise, if for some $x \in C(e;\pi_{(M_i+1)})$, $C((p(x),x);(\pi_{(N_i+1)})^{(M_i)})\neq \emptyset$, choose arbitrarily some $y \in C((p(x),x);(\pi_{(N_i+1)})^{(M_i)})$, and let $p(y)$ be the parent of $y$. Note that $T_{p(x)}=s_{N_i}=s_{M_i}=T_{p(y)}$ and $T_{x}=s_{N_i+1}=s_{M_i+1}=T_{y}$. Consider the tree $\T(y;\pi^{(N_i)})=\T(y;\pi_{N_i}\rightarrow \pi_{N_i+1}\rightarrow \cdots)$. Take this tree and drop it from $x$ in the tree $\T(v;\pi_{(M_i+1)})$, i.e., \emph{graft} the root $y$ of $\T(y;\pi^{(N_i)})$ at $x$ in $\T(v;\pi_{(M_i+1)})$. Again observe that we are grafting a vertex on another with identical type. Do this procedure for every $x \in C(e;\pi_{(M_i+1)})$, for which $C((p(x),x);(\pi_{(N_i+1)})^{(M_i)})\neq \emptyset$, each time choosing an $y \in C((p(x),x);(\pi_{(N_i+1)})^{(M_i)})$ in some arbitrary fashion. For those $x \in C(e;\pi_{(M_i+1)})$ for which $C((p(x),x);(\pi_{(N_i+1)})^{(M_i)})=\emptyset$, simply drop i.i.d. copies of the tree $\BP_{\pi^{(N_i)}}$ from such $x$ in $\T(v;\pi_{(M_i+1)})$. Call the resulting tree $\psi_i(\T(v;\pi))$. 
\end{itemize}
\end{definition}

The key result with Definition \ref{def:psi} is the following distributional relation.

\begin{proposition}\label{prop:dist_iden_looperase}
    Consider any 1-dependent branching process $\BP(\bp,\bD,\Tilde{\bD})$, and a vertex $v$ in it with $p(v)=u$ and such that $T_{p(v)}=s_0, T_v=s_1$. Furthermore, consider any infinite ray of the form $\pi=s_0 \rightarrow s_0s_1 \rightarrow \cdots$, and collections of positive integers $\{M_i:i\in I\}$, $\{N_i:i\in I\}$ such that conditions \eqref{eq:integer interval condition} hold, where $I$ is a countable index set. Then for any subset $\{i_1,\dots,i_N\}\subseteq I$, taking $\T=\BP(\bp,\bD,\Tilde{\bD})$, viewed as trees labeled by vertex types, there is the distributional identity
    \begin{align*}
       \psi_{i_N}(\dots \psi_{i_2}(\psi_{i_1}(\T))\dots)\stackrel{d}{=}\T(v;\tau_{i_N}(\dots\tau_{i_2}(\tau_{i_1}(\pi))\dots)). \numberthis \label{eq:loop_erase_identity_finite}
    \end{align*}
    In particular, letting $N \to \infty$ above, for the case when $I$ is infinite and $I=\{i_1,i_2,\dots\}$ is some enumeration of it, 
    \begin{align*}
        \psi_\infty(\T):=\dots \psi_{i_3}(\psi_{i_2}(\psi_{i_1}(\T)))\dots\stackrel{d}{=}\T(v;\tau_\infty(\pi)). \numberthis \label{eq:loop_erase_identity_infinite}
    \end{align*}
\end{proposition}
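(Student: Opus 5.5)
The plan is to prove \eqref{eq:loop_erase_identity_finite} by induction on $N$, so the core of the argument is the case $N=1$. The finite identity then passes to the limit to give \eqref{eq:loop_erase_identity_infinite}.

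For a single $i$, I will condition on the tree $\T(v;\pi_{(M_i+1)})$, that is, on the first $M_i+1$ levels of the induced subtree. By Definition \ref{def:BP} this truncated tree has the same law as the first levels of $\BP_\pi$. Both it and $\psi_i(\T(v;\pi))$ agree with the corresponding first levels of $\T(v;\tau_i(\pi))$, because $\tau_i(\pi)$ and $\pi$ coincide up to index $M_i-1$ and $s_{M_i}=s_{N_i}$, $s_{M_i+1}=s_{N_i+1}$. It then remains to show the following claim: conditionally on this truncated tree, the subtrees hanging off the leaves $x\in C(e;\pi_{(M_i+1)})$ in $\psi_i(\T(v;\pi))$ are i.i.d. copies of $\BP_{\pi^{(N_i)}}$, independent of the truncated tree.

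To prove the claim, I use the $1$-dependence of $\BP(\bp,\bD,\Tilde{\bD})$. Given the types of $(p(z),z)$, the descendant tree of any vertex $z$ depends only on the type pair $(T_{p(z)},T_z)$ and is independent of everything outside it.
\begin{itemize}
\item For a leaf $x$ where a $y$ is chosen, the grafted tree is $\T(y;\pi^{(N_i)})$. Here $(T_{p(y)},T_y)=(s_{N_i},s_{N_i+1})$, so this tree has law $\BP_{\pi^{(N_i)}}$.
\item For a leaf where no $y$ exists, an independent copy is dropped by construction, which has the same law.
\end{itemize}

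The delicate point is that the choice of $y$, and the event $\{C((p(x),x);(\pi_{(N_i+1)})^{(M_i)})\ne\emptyset\}$, depend on the descendants of $x$ only up to the level of $y$. They do not depend on the descendant tree of $y$ beyond $y$. I plan to handle this with a stopping-set (strong Markov) argument for branching processes. The choice of $y$ is measurable with respect to the part of the tree explored down to generation $N_i+1$ relative to $v$, and by $1$-dependence the subtree of $y$ is conditionally distributed as $\BP_{\pi^{(N_i)}}$ given that $\sigma$-field and given $(T_{p(y)},T_y)$.

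Independence across distinct leaves $x$ holds because their descendant sets are disjoint. Independence from $\T(v;\pi_{(M_i+1)})$ holds because the grafted trees live strictly below $x$, and the dropped copies are independent by construction. Together these give the $N=1$ identity.

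For the induction step, $\psi_{i_1}(\T)\stackrel{d}{=}\T(v;\tau_{i_1}(\pi))$, and the remaining intervals are disjoint from $[M_{i_1},N_{i_1}]$. After reindexing along the shortened ray, they therefore still satisfy \eqref{eq:integer interval condition}. The subsequent maps $\psi_{i_2},\dots$ act on a tree with the law of a ray-induced subtree of a $1$-dependent process, so the $N=1$ case applies again.

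I expect the main obstacle to be the bookkeeping here. The operations $\psi_{i_j}$ are defined on $\T$ itself, so I must check that applying $\psi_{i_2}$ to the already-modified tree coincides in law with applying it to a fresh copy of $\T(v;\tau_{i_1}(\pi))$. This will be argued by noting that $\psi_i$ is a measurable function of the tree together with independent auxiliary randomness, so equality in law is preserved.

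For the infinite case, I define $\psi_\infty$ as the limit. The first $L$ generations of the composed tree stabilize once all intervals $[M_{i_j},N_{i_j}]$ with $M_{i_j}\le L$ have been processed, after reindexing. The finite identities therefore give equality of all finite-depth marginals, and since the law of a locally finite rooted tree is determined by these marginals, \eqref{eq:loop_erase_identity_infinite} follows; \eqref{eq:ray_perm_inv} ensures that the enumeration does not matter.
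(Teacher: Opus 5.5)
Your proposal is correct and follows the same route as the paper: the single-interval case via the branching property at the leaves $x\in C(e;\pi_{(M_i+1)})$, then induction by pushing the equality in law through the next map $\psi_{i_j}$ (the paper phrases this as coupling the two trees so that they are equal), then passage to the limit generation by generation. You also make explicit two points the paper leaves implicit but which the identity genuinely needs: the choice of $y$ must be measurable with respect to the tree down to the generation of $y$, and the remaining intervals must be reindexed along the shortened ray.
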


\begin{remark}
    By \eqref{eq:ray_perm_inv} and \eqref{eq:loop_erase_identity_infinite}, note that the distribution of $\psi_{\infty}(\T)$ is invariant under any bijection $\sigma: \N \to \N$, i.e.,
    \begin{align*}
    \lim_{N \to \infty}  \psi_{i_N}(\dots \psi_{i_2}(\psi_{i_1}(\T))\dots)\stackrel{d}{=} \lim_{N \to \infty}  \psi_{\sigma(i_N)}(\dots \psi_{\sigma(i_2)}(\psi_{\sigma(i_1)}(\T))).
    \end{align*}
\end{remark}

\begin{proof}[Proof of Proposition \ref{prop:dist_iden_looperase}]
    Let us first prove the result when $|I|=1$. Thus we have a pair of integers $M_i<N_i$ satisfying the second line of the conditions \eqref{eq:integer interval condition}. Recall the construction of the tree $\psi_{i}(\T(v;\pi))$. Following the three possibilities of the construction, we do a case-by-case analysis. 
    
    First, obviously if $C(e;\pi_{(M_i+1)})=\emptyset$, there is nothing to prove as $\psi_i(\T(v;\pi))=\T(v;\pi)=\T(v;\tau_i(\pi))$, simply because $\T=\BP(\bp,\bD,\Tilde{\bD})$ does not even survive up to generation $M_i$. 
    
    Second, consider the case that $C(e;\pi_{(M_i+1)})\neq \emptyset$ but for all $x\in C(e;\pi_{(M_i+1)})$ we have $C((p(x),x);(\pi_{(N_i+1)})^{(M_i)})=\emptyset$. Note that in this case, $\psi_i(\T)(v;\pi_{(M_i+1)})=\T(v;(\tau_i(\pi))_{(M_i+1)})$. Thus, by the branching property, it is enough to check that for each $x \in C(e;\pi_{(M_i+1)})$, the subtrees $\psi_i(\T)(x;\pi^{(M_i)})$ agree in distribution with the corresponding subtrees $\T(v;\tau_1(\pi))(x;\pi^{(M_i)})$ of $\T(v;\tau_1(\pi))$. Note that since $\T=\BP(\bp,\bD,\Tilde{\bD})$, by the definition of the map $\tau_1$, we have $\T(v;\tau_1(\pi))(x;\pi^{(M_i)})=\T(x;\pi^{(N_i)})\stackrel{d}{=}\BP_{\pi^{(N_i)}}$. Further these subtrees are independent across different $x\in C(e;\pi_{(M_i+1)})$. On the other hand, by the construction of the tree $\psi_i(\T)$, $\psi_i(\T)(x;\pi^{(M_i)})\stackrel{d}{=}\BP_{\pi^{(N_i)}}$. The result now follows.  
    
    Finally, consider the case that there are $x \in C(e;\pi_{(M_i+1)})$ with $y\in C((p(x),x);(\pi_{(N_i+1)})^{(M_i)})$.  As before, by the branching property, it is enough to check that for each $x \in C(e;\pi_{(M_i+1)})$,
    \begin{align*}
    \psi_i(\T)(x;\pi^{(M_i)})\stackrel{d}{=}\T(v;\tau_1(\pi))(x;\pi^{(M_i)}). \numberthis \label{eq:subtree_dist_id}    
    \end{align*}
     As in the last case, for those $x\in C(e;\pi_{(M_i+1)})$ with $C((p(x),x);(\pi_{(N_i+1)})^{(M_i)})=\emptyset$, we can similarly argue \eqref{eq:subtree_dist_id}. Thus, we need only check \eqref{eq:subtree_dist_id} for $x \in C(e;\pi_{(M_i+1)})$ such that there is some $y\in  C((p(x),x);(\pi_{(N_i+1)})^{(M_i)})$. To see this, by the definition of the map $\psi_i$, we note that $\psi_i(\T)(x;\pi^{(M_i)})=\T(y;\pi^{(N_i)})\stackrel{d}{=}\BP_{N^{(i)}}$. And as before, we note that $\T(v;\tau_1(\pi))(x;\pi^{M_i})=\T(x;N^{(i)})\stackrel{d}{=}\BP_{\pi^{(N_i)}}$. This concludes \eqref{eq:subtree_dist_id}.

    Now the general result \eqref{eq:loop_erase_identity_finite} follow because of an inductive argument. To establish \eqref{eq:loop_erase_identity_finite} given the statement is true when $i_{N}$ is replaced by $i_{N-1}$, we can couple the two trees $\cT_1=\psi_{i_{N-1}}(\dots \psi_{i_2}(\psi_{i_1}(\T))\dots)$ and $\cT_2=\T(v;\tau_{i_{N-1}}(\dots\tau_{i_2}(\tau_{i_1}(\pi))\dots))$ in some probability space where they are equal. Working in this space, writing $\cT_2=\T(v;\pi')$ where $\pi'=\tau_{i_{N-1}}(\dots\tau_{i_2}(\tau_{i_1}(\pi))\dots)$ we have
    \begin{align*}
        \cT_1=\psi_{i_{N}}(\dots \psi_{i_2}(\psi_{i_1}(\T))\dots)=\psi_{i_N}(\cT_1)&=\psi_{i_N}(\cT_2)\\&\stackrel{d}{=}\cT_2(v;\tau_{i_N}(\pi'))=\T(v;\tau_{i_N}(\pi'))=\tau_{i_{N}}(\dots\tau_{i_2}(\tau_{i_1}(\pi))\dots),
    \end{align*}
    establishing \eqref{eq:loop_erase_identity_finite}, where for the distributional identity in the above display we have employed the result for $|I|=1$.

    Finally, note that \eqref{eq:loop_erase_identity_infinite} is a consequence of \eqref{eq:loop_erase_identity_finite}, by taking limits.\footnote{Ignoring the topological technicalities as usual, but note that, as discussed before, this is again just composing functions in a sequential order.}
\end{proof}

To approach the proof of Theorem \ref{thm:extinct_1dep}, we need to marry the notions of type rays and paths with the type graph $G_S$ (recall Definition \ref{def:type graph}). Let us do this next. First note that given any type path (respectively ray) $\pi=s_0\rightarrow s_0s_1 \rightarrow s_0s_1s_2 \rightarrow \dots $, we can naturally associate to it a finite (respectively infinite) path $\phi(\pi)$ via a canonical mapping $\phi$ in the graph $G_S$ as
\begin{align*}
    \phi(\pi)=((s_0,s_1,s_2), (s_1,s_2,s_3), \dots).
\end{align*}
Clearly, the map $\phi$ is invertible and let us denote its inverse by $\phi^{-1}$.  Consider any infinite ray $\pi=s_0 \rightarrow s_0s_1 \rightarrow \cdots$, such that there exist integers $1\leq M_i< N_i$ satisfying conditions \eqref{eq:integer interval condition}. Note that the segment $((s_{M_i-1},s_{M_i},s_{M_i+1}),(s_{M_i},s_{M_i+1},s_{M_i+2}),\dots , (s_{N_i-1},s_{N_i},s_{N_i+1}) )$ of the path $\phi(\pi)$ is then a loop at the vertex $(s_{M_i-1},s_{M_i},s_{M_i+1})\in V(G_S)$. This allows us to translate the identities \eqref{eq:loop_erase_identity_finite} and \eqref{eq:loop_erase_identity_infinite} in terms of \emph{loop erasures} on infinite paths in $G_S$. Let us do this next.

For any infinite path $\xi=(v_0,v_1,\dots)$ in $G_S$, and a loop in this path of the form $(v_M,v_{M+1},\dots,v_{M})$ where $v_M=v_N$, we denote the loop erasure of $\xi$ after erasing this loop as the new path
\begin{align*}
    \xi\setminus[M,N]:=(v_0,v_1,\dots,v_{M-1},v_N,v_{N+1},\dots).\numberthis \label{eq:one_loop_erase}
\end{align*}

Next, consider a countable collection of loops $\{(v_{M_i},v_{M_i+1},\dots,v_{N_i}):i \in I\}$. Also assume the integers $M_i$ and $N_i$ satisfy conditions \eqref{eq:integer interval condition}. In particular, note that since the sets $[M_i,N_i]$ and $[M_j,N_j]$ do not intersect for $i \neq j$, it does not matter in which order we erase these loops in the path $\xi$. Assuming $I=\{i_1,i_2,\dots\}$ where $N_{i_j}<M_{i_{j+1}}$ for all $j \geq 1$, this lets us define the path obtained by erasing all these loops (in some order)
\begin{align*}
    \xi\setminus\left(\cup_{i \in I}[M_{i},N_i] \right):=(v_0,\dots,v_{M_{i_1-1}},v_{N_{i_1}},\dots,v_{M_{i_2-1}},v_{N_{i_2}},\dots,v_{M_{i_3-1}},v_{N_{i_3}},\dots). \numberthis \label{eq:multiple_loop_erase}
\end{align*}

In this light, we can translate identity \eqref{eq:loop_erase_identity_infinite} in the following manner. Assume $\xi=(v_0,v_1,\dots)$ and $\pi=(s_0\rightarrow s_0s_1 \dots )$ are respectively an infinite path in $G_S$ and an infinite type ray such that $\phi(\pi)=\xi$, or, equivalently, $v_i=(s_i,s_{i+1},s_{i+2})$ for all $i \geq 0$. Assume further the existence of $\{M_i:i\in I\}$ and $\{N_i:i\in I\}$ satisfying \eqref{eq:integer interval condition}. Recalling the definition of the map $\tau_\infty$ from the statement of Proposition \ref{prop:dist_iden_looperase}, observe that
\begin{align*}
    \phi^{-1}(\xi\setminus\left(\cup_{i \in I}[M_i,N_i] \right))=\tau_\infty(\pi).
\end{align*} 
Thus, we can write \eqref{eq:loop_erase_identity_infinite} as
\begin{align*}
    \psi_\infty(\T)\stackrel{d}{=} \T(v;\phi^{-1}(\xi\setminus\left(\cup_{i \in I}[M_i,N_i] \right))).
\end{align*}
An analogous identity for \eqref{eq:loop_erase_identity_finite} follows similarly.

We can now turn to the proof of Theorem \ref{thm:extinct_1dep}.

\begin{proof}[Proof of Theorem \ref{thm:extinct_1dep}]
   \textbf{Extinction implies \eqref{eq:ext_BP_condition}:}  Let us first assume $\eta =1$, thus almost surely, $\T=\BP(\bp,\bD,\Tilde{\bD})$ goes extinct. We aim to show \eqref{eq:ext_BP_condition}. If not, there is a term in the maximum defining the LHS of \eqref{eq:ext_BP_condition} that is strictly greater than $1$. That is, there is a $s\in S$ with $p_s>0$, such that there is a $s'\in S$ with $p^{(s,s')}_0<1$, such that there is a $v \in V(s,s')$, such that there is a $u \in \cC_D(v)$, such that there is a $c \in \cL(u)$ with $\sM(c)>1$. Note that $v$ has the form $(s_0,s_1,s_2)$ with $s_0=s$ and $s_1=s'$. Since $u \in \cC_D(v)$ there is a path $(v_0,v_1,\dots,v_l)$ connecting $v$ and $u$ in $D(G_S)$, where $v_0=v$, $v_l=u$ and $\Exp{D_{v_i}}\Exp{D_{v_{i+1}}}>0$ for each $0\leq i \leq l-1$. Construct the infinite path $\xi$ by taking the initial segment $(v_0,\dots, v_l)$ and then concatenating the loop $c \in \cL(u)$ infinitely many times to it. That is, if $c=(w_0,\dots, w_{m})$ with $w_0=w_m=u$, 
    \begin{align*}
        \xi=(v_0,v_1,v_2,\dots),\textrm{ with }v_{l+i}=v_{l+i+Km}=w_i\textrm{ for all }0\leq i \leq m \textrm{ and }  K \geq 1. \numberthis \label{eq:def_path_inf_concat_loop}
    \end{align*}
Observe that since $p_s>0$ and $p^{(s,s')}_0<1$, with positive probability the root has type $T_\varnothing=s$ and the root has a child of type $s'$. Let us call this event $\zeta(s,s')$, i.e.,
\begin{align*}
    \zeta(s,s'):=\{T_\varnothing=s,\textrm{ and there exists a child }\by\textrm{ of }\varnothing\textrm{ with }T_{\by} =s'\} \numberthis \label{eq:def_zeta_event},
\end{align*}
and condition on it. We claim that conditionally on $\zeta(s,s')$, the tree $\T=\BP(\bp,\bD,\Tilde{\bD})$ survives forever, which is a contradiction to our assumption, and this will prove \eqref{eq:ext_BP_condition} by contradiction.


To see our claim, given the event $\zeta(s,s')$, let $\mathbf{v}$ be a child of $\varnothing$ such that with $e=(\varnothing,\mathbf{v})$ we have $T(e)=(s,s')$. It is enough to show that given the event $\zeta(s,s')$ (which has positive probability), the conditional probability that the tree $\T(\mathbf{v};\pi)$ is infinite is positive, where $\pi=\phi^{-1}(\xi)$ with $\xi$ defined as in \eqref{eq:def_path_inf_concat_loop}. As before, let us denote $\pi=s_0 \rightarrow s_0s_1 \rightarrow \dots$, so that $v_i=(s_i,s_{i+1},s_{i+2})$. Let us denote the initial segment $(v_0,\dots,v_l)$ of the path $\xi$ by $\xi'$, and consider the type path $\phi^{-1}(\xi')$. Observe that since $u=v_l \in \cC_D(v)$, the quantity $\Exp{D_{v_0}}\Exp{D_{v_1}}\dots \Exp{D_{v_l}}$ is strictly positive. On the other hand, by  repeatedly applying Wald's identity,
\begin{align*}
\CExp{C(e;\phi^{-1}(\xi'))}{\zeta(s,s')}=\Exp{D_{v_0}}\Exp{D_{v_1}}\dots \Exp{D_{v_l}}>0.    
\end{align*}
Thus, given $\zeta(s,s')$, with positive conditional probability there is a vertex $\by \in C(e;\phi^{-1}(\xi'))$. Note that $u=v_l=(s_l,s_{l+1},s_{l+2})$, so that $T_{\by}=s_{l+2}$ and $T_{p(\by)}=s_{l+1}$. It is enough to show that with positive conditional probability, the subtree of this vertex in $\T(\bv;\pi)$ is infinite, i.e., the tree $\T(\by;(\phi^{-1}(\xi))^{(l+1)})=\cT(\by;s_{l+1}\rightarrow s_{l+1}s_{l+2}\rightarrow \cdots)$  is infinite.

Note that the infinite path $\phi\left( (\phi^{-1}(\xi))^{(l+1)}\right)=(v_{l+1},v_{l+2},\dots)$ in $G_S$ is an infinite concatenation of the loop $(w_1,w_2,\dots,w_{m-1}, w_m,w_1)$. Recall Proposition \ref{prop:GW_subtrees}. Observe that letting $e'=(p(\by),\by)$, and $\pi'=\phi^{-1}(w_1,w_2,\dots,w_{m})=(s_{l+1}\rightarrow s_{l+1}s_{l+2}\rightarrow \dots \rightarrow s_{l+1}\dots s_{l+m+2})$, by Proposition \ref{prop:GW_subtrees}, the tree $\BP_{\by,\pi'}$ is a BGW tree with mean offspring number $\sM(c')$, where $c'$ is the loop $(w_1,w_2,\dots,w_{m-1},w_m, w_1)$. However $\sM(c')=\sM(c)>1$ for $c\in \cL(u)$ with $c=(w_0,\dots,w_m)$, where the last inequality holding by our assumption that the LHS of \eqref{eq:ext_BP_condition} is strictly larger than $1$. Thus, $\BP_{\by,\pi'}$ survives forever. Since each generation of $\BP_{\by,\pi'}$ is a sub-generation of $\T(\by;(\phi^{-1}(\xi))^{(l+1)})$ (namely, the $n$-th generation of $\BP_{\by,\pi'}$ is exactly the $(m+1)n$-th generation of $\T(\by;(\phi^{-1}(\xi))^{(l+1)})$), so does the latter tree, thus, so does $\T=\BP(\bp,\bD,\Tilde{\bD})$, and this is a contradiction to $\eta=1$. By contradiction the claimed implication is proved.

\medskip

\textbf{\eqref{eq:ext_BP_condition} implies extinction:} We now turn to the other direction, i.e., assuming the condition \eqref{eq:ext_BP_condition} we show that $\T=\BP(\bp,\bD,\Tilde{\bD})$ dies out almost surely. Recall the event $\zeta(s,s')$ from \eqref{eq:def_zeta_event}. If there is no pair $s,s'\in S$ with $\Prob{\zeta(s,s')}>0$, then we have extinction, and there is nothing to prove; thus, let us assume there exists $s,s'\in S$ with $\Prob{\zeta(s,s')}>0$. Conditionally on the event $\zeta(s,s')$, for any $\bv$ that is a child of $\varnothing$ with $T_\bv=s'$, it is sufficient to show that for any type ray $\pi=(s_0\rightarrow s_0s_1\rightarrow \dots)$ where $s_0=s, s_1=s'$, the tree $\T(\bv;\pi)$ dies out. Let us prove this by contradiction.

For a contradiction, assume the existence of such a type ray $\pi$ such that $\T(\bv;\pi)$ survives with positive conditional probability given $\zeta(s,s')$. Consider the infinite path $\phi(\pi)=\xi=(v_0,v_1,\dots)$ in $G_S$, with $v_i=(s_i,s_{i+1},s_{i+2})$ for all $i \geq 0$. Since $G_S$ is a finite graph, by the pigeonhole principle, there has to be some $u\in G_S$ and and some $c \in \cL(u)$ such that $c$ repeats infinitely often in the path $\phi(\pi)$. Let the indices of occurrences of $c$ be in order $i_1<i_2<\dots $, i.e., if $c=(w_0,\dots,w_m)$ with $w_0=w_m=u$, then $u=w_0=v_{i_1}=v_{i_2}=\dots , w_1=v_{i_1+1}=v_{i_2+1}=\dots,\dots, w_m=v_{i_1+m}=v_{i_2+m}=\dots=u$ . Observe that it is possible to have ${i_j+m}={i_{j+1}}$, if, for example, the $j$-th and $(j+1)$-th appearance of the loop $c$ are consecutive. 

Consider integers $1\leq M_i <N_i$ satisfying the condition \eqref{eq:integer interval condition}. Recall the definition of the map $\psi_i$ from Definition \ref{def:psi}. For any vertex $v \in \T=\BP(\bp,\bD,\Tilde{\bD})$ with $T_{p(v)}=s_0$ and $T_v=s_1$, it is easily checked by the construction of the tree $\psi_i(\T(v;\pi))$ that if the tree $\T(v;\pi)$ survives forever, then so does the tree $\psi(\T(v;\pi))$ (e.g., since $\T(v;\pi)$ survives, note that only the third point in the construction is relevant, and in that case we can always find a $y$ as specified in the construction). Further this tree has the same distribution as $\cT(v;\tau_i(\pi))$ due to Proposition \ref{prop:dist_iden_looperase}. In fact, there is nothing special about a single interval $[M_i,N_i]$, and it is easily checked that for any collection of integers $\{M_i:i\in I\}$, $\{N_i:i\in I\}$ satisfying \eqref{eq:integer interval condition}, the tree $\psi_\infty(\cT(v;\pi))$ survives forever if $\cT(v;\pi)$ does so, and has the same distribution as $\cT(v;\tau_\infty(\pi))$, by a simple inductive argument similar to the one appearing in the proof of Proposition \ref{prop:dist_iden_looperase}.

We now specify the particular integer collections $\{M_i\}_{i \in I}$ and $\{N_i\}_{i \in I}$ for which we apply this result. Recall the indices $i_1<i_2<\dots$ of the occurrences of the loop $c=(w_0,\dots,w_m)$ in the path $\xi=\phi(\pi)$. Let $l_1:=\inf\{j \geq 1:i_j+m\neq i_{j+1}\}$. Set $M_1=i_{l_1}+m$ and $N_1=i_{l_1+1}$. Then having defined $l_{p-1}$ for $p \geq 2$, we inductively define $l_p=\inf\{j \geq i_{l_{p-1}+1}:i_j+m\neq i_{j+1}\}$, and set $M_p=i_{l_p}+m$ and $N_p=i_{l_p+1}$. In words, either the occurrences of $c$ on the path are consecutive, or they are separated by another (not necessarily simple) loop at $u$. Then $l_p$ is the $p$-th index (in increasing order) such that the $l_p$-th and the $(l_p+1)$-th occurrence of the loop $c$ on the path $\xi$ are non-consecutive. Then we set $M_p$ to be $i_{l_p+m}$ and $N_p=i_{l_p+1}$.

As before, we define the maps $\psi_i$ and $\tau_i$ corresponding to our specified $M_i$ and $N_i$ as in the last paragraph, for $i \geq 1$, and the limiting maps $\psi_\infty$ and $\tau_\infty$. Note that in this case we simply have $I=\N$. Taking $v=\bv$, as argued before, since $\T(\bv;\pi)$ survives forever, the tree $\psi_{\infty}(\T(\bv;\pi))$ survives forever, which also has the same distribution as $\T(v;\tau_\infty(\pi))$.  However, by the definition of the integers $M_i$ and $N_i$, this tree can also be written as $\T(\bv;\phi^{-1}(\xi'))$, for the path $\xi'=\xi\setminus(\cup_{j\geq 1, i_j+m\neq i_{j+1}}[i_j+m,i_{j+1}])$, recall the notation \eqref{eq:multiple_loop_erase}, which is essentially the path $(v_0,v_1,\dots,v_{i_1})$ together with the loop $c$ concatenated infinitely many times at its end.

 Since $\cT(\bv;\phi^{-1}(\xi'))$ is infinite, the expected number of vertices in generation $i_1+2$ of this tree is strictly positive, which by Wald's identity is equivalent to saying,
\begin{align*}
    \Exp{D_{v_0}}\Exp{D_{v_1}}\dots \Exp{D_{v_{i_1}}}>0,
\end{align*}
or, equivalently, $v_{i_1} \in \cC_D(v_0)$. Next, choose any $\by$ in this generation and note that $T_{\by}=s_{i_1+2}$. Define $\pi'=(s_{i_1+1}\rightarrow s_{i_1+1}s_{i_1+2}\rightarrow \dots \rightarrow s_{i_1+1}\dots s_{i_1+m+2})$. Note that if we inductively let $Z_0=\{\by\}$ and given $Z_{n-1}$, $Z_n=\cup_{u \in Z_{n-1}}C((p(u),u);\pi')$, then by Proposition \ref{prop:GW_subtrees}, $Z_n$ forms the $n$-th generation of a BGW tree $\BP_{\by,\pi'}$ with mean offspring number $\sM(c)$. Further, since $\cT(\bv;\phi^{-1}(\xi'))$ is infinite, this BGW tree survives, which means $\sM(c)>1$. Since $p_s>0,p_0^{(s,s')}<1$, $v_0\in V(s,s')$, $i_1\in \cC_D(v_0)$ and $\sM(c)>1$, this violates \eqref{eq:ext_BP_condition}, and we have arrived at our contradiction. \end{proof}

\section{Existence of giants: proof of Theorem \ref{thm:giants_MCMs}}\label{sec:giant_proof}
In this section we prove Theorem \ref{thm:giants_MCMs}. As usual, by our sampling result Corollary \ref{cor:sample_Gn}, it is enough to show this for $\G_n$. The subcritical part is a straightforward consequence of our local limit result Theorem \ref{thm:loc_lim}, and this argument is quite standard, see \cite{van2024random}. Roughly speaking, the local limit always \emph{dominates} the pre-limit in a certain sense, so that an almost sure extinction of the local limit immediately rules out the presence of a component with size comparable with the size of the vertex set in the pre-limit. For completeness, we give a short formal argument next, but let us mention this is essentially the same argument as appears in \cite[Corollary 2.27]{van2024random}.

\begin{proof}[Proof of Theorem \ref{thm:giants_MCMs}(2): The subcritical regime.]
    For a fixed $K \geq 1$, denoting by $\cC(v)$ the connected component of a vertex in $\G_n$, we have by Theorem \ref{thm:loc_lim}
    \begin{align*}
        Z_{>K}:=\frac{1}{n}\sum_{v \in V_n}\ind{\cC(v)>K}\to \Prob{\cC(\varnothing)>K}, \numberthis \label{eq:local_component_func_sub}
    \end{align*}
since an inspection of $B_{G_n}(K)$ completely reveals whether $\cC(v)>K$ or not, and
where $\cC(\varnothing)$ is the connected component of $\varnothing$ in $\cT_\infty$. Noting the bounds
\begin{align*}
    Z_{>K}\geq \frac{|\cC^{(1)}_n|\ind{|\cC^{(1)}_n>K|}}{n}\geq \frac{|\cC^{(1)}_n|}{n}-\frac{K}{n},
\end{align*}
we thus have for any $\delta>0$,
\begin{align*}
    \liminf_{n \to \infty}\Prob{\frac{|\cC^{(1)}_n|}{n}<\Prob{\cC(\varnothing)>K}+\delta/4+\delta/2}\geq 1-\delta,
\end{align*}
since $K/n<\delta/4$ for $n$ large enough and for any fixed $K$. By monotonicity, $\lim_{K \to \infty}\Prob{\cC(\varnothing)>K}=1-\eta=0$, thus, doing all this with a $K$ sufficiently large satisfying $\Prob{\cC(\varnothing)>K}<\delta/4$, we obtain
\begin{align*}
    \liminf_{n \to \infty}\Prob{\frac{|\cC^{(1)}_n|}{n}<\delta}\geq 1-\delta,
\end{align*}
which concludes the proof in the subcritical regime.
\end{proof}
The supercritical case is more involved, and uses a constructive approach. Namely, when $\eta<1$, using Theorem \ref{thm:extinct_1dep}, we have one of the terms defining the maximum in \eqref{eq:ext_BP_condition} is strictly larger than $1$. Using this term, we will do a clever exploration in $\G_n$, and show that in doing so we uncover a giant component. We describe this exploration next.

\subsection{Proof of the supercritical statement}\label{sec:proof_supcrit_giant}
Let $\eta<1$, so that the left-hand side of \eqref{eq:ext_BP_condition} is strictly larger than $1$. Recall the type digraph $G_S$ from Definition \ref{def:type graph}. Consider a $u \in V(G_S)$ such that $\cM(c)>1$. Let $c=(u=u_0,u_1,\dots,u_l=u_0=u)$, where let us write $u_i=(s_i,s_{i+1},s_{i+2})$ for each $0\leq i \leq l-3$ with $s_i \in [k]$ for each $i$, so that $u_{l-2}=(s_{l-2},s_{l-1},s_0),u_{l-1}=(s_{l-1},s_0,s_1)$ and $u_l=u_0=(s_0,s_1,s_2)$. Consider $o_1,o_2$ random vertices sampled without replacement from $\cN^{(s_0)}$. Denote by $$\{o_1 \stackrel{\G_n}{\longleftrightarrow} o_2\}$$ the event that $o_1$ and $o_2$ are connected in the graph $\G_n$. 
\begin{proposition}[Connection of random vertices]\label{prop:non_neg_conn_prob_randomvertices}
    It holds that
    \begin{align*}
        \liminf_{n \to \infty}\Prob{o_1 \stackrel{\G_n}{\longleftrightarrow} o_2}>0. \numberthis \label{eq:non_neg_conn_prob_randomvertices}
    \end{align*}
\end{proposition}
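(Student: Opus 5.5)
We prove this by exploring $\G_n$ simultaneously from $o_1$ and $o_2$, following only the types of the loop $c$. The restricted exploration is coupled with the BGW tree of Proposition \ref{prop:GW_subtrees}, which is supercritical because $\sM(c)>1$. Once each exploration has many active half-edges of a given compatible type, a collision between the two explorations follows from a birthday-type bound.

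\textbf{Step 1: reaching the loop.} Since the whole of $c$ lies in $D(G_S)$ and $\sM(c)>1$, every entry $\Exp{D_{u_i}}$ is positive. In particular the type pairs along $c$ are GOOD, and $s_0$ has a positive expected number of type-$s_1$ neighbours, so $\Prob{D_{(s_0,s_1)}>0}>0$. By the local limit (Theorem \ref{thm:loc_lim}, in the form \eqref{eq:loc_conv_general_claim} with $m=2$), with probability bounded below the following hold simultaneously and independently for $o_1$ and $o_2$: each has a type-$s_1$ neighbour $v_i$, and the edge $(o_i,v_i)$ has type $(s_0,s_1)$. These are exactly the starting edges for $C(\cdot\,;\pi')$ with $\pi'=\phi^{-1}(c)$.

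\textbf{Step 2: restricted exploration.} From each $v_i$ we explore in breadth-first manner, pairing at each stage only half-edges whose type continues the loop. Thus from a vertex reached via an edge of type $(s_j,s_{j+1})$ we pair only its half-edges in $H_{s_{j+1}\to s_{j+2}}$. This exploration is run up to the first time the number of active half-edges reaches $K_n$, with $K_n=n^{1/2+\epsilon}$, or $\epsilon_0 n$ if needed.

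As in the proof of Theorem \ref{thm:loc_lim}, pairing-as-we-explore lets us couple this exploration with the BGW process $\BP_{v_i,\pi'}$. Because of depletion we cannot hope for exact equality up to linear size. Instead we use a \emph{stochastic lower bound}: truncate degrees at a large constant $L$ and remove a small fraction $\delta$ of half-edges. As long as at most $\epsilon_0 n$ half-edges have been used, each step dominates an offspring law whose mean product around the loop is still $>1$. By Assumption \ref{assump:offspring_degree_reg} and continuity of the truncated means, this holds for $L$ large and $\epsilon_0,\delta$ small.

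A supercritical multi-step BGW tree with bounded offspring grows exponentially with positive probability. A standard martingale or Chernoff argument on the dominating process then shows the following: with probability bounded away from zero, both explorations reach $K_n$ active half-edges of some fixed loop type $H_{s_{j+1}\to s_{j+2}}$ without meeting and without using more than $\epsilon_0 n$ half-edges. Here we pass to a common residue $j$ by pigeonhole over the $l$ positions of the loop. BAD half-edges are avoided exactly as in Lemma \ref{lem:sn_small}.

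\textbf{Step 3: merging.} Condition on both explorations having $K_n$ unpaired half-edges in $H_{s_{j+1}\to s_{j+2}}$. If the two sets are in the same partition pair, we pair the half-edges from exploration 1 one at a time. Each such half-edge hits a compatible half-edge of exploration 2 with probability at least $K_n/|H_{s_{j+2}\to s_{j+1}}|\ge cK_n/n$. Over $K_n$ trials, the failure probability is at most $(1-cK_n/n)^{K_n}\le e^{-cK_n^2/n}\to 0$ when $K_n\gg\sqrt n$.

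If instead we need an intermediate step, we continue exploration 2 one more stage along the loop to obtain half-edges in the compatible class $H_{s_{j+2}\to s_{j+1}}$. This is possible because the loop traverses the pair in both directions or returns to the same type. If that is awkward, we simply pair exploration 1's half-edges against the roughly $K_n$ \emph{vertices} of type $s_{j+2}$ reached by exploration 2, using their other unpaired half-edges in $H_{s_{j+2}\to s_{j+1}}$. Their number is linear in $K_n$ with positive probability, since $\Exp{D_{(\cdot)}}>0$ along the loop.

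\textbf{Main obstacle.} The key difficulty is Step 2: pushing the exploration from the branching regime (where Theorem \ref{thm:loc_lim} gives exact coupling, only up to $o(\sqrt{n/\Delta_n})$ steps) to size $n^{1/2+\epsilon}$ or $\epsilon_0 n$. This requires the truncation/domination argument with uniform control over depletion of every half-edge class along the loop. I would handle it by comparing with a truncated, thinned BGW whose offspring along each $u_i$ is $\min(D_{u_i},L)$ thinned by $1-\delta$, and applying Wald plus a second-moment bound to show linear-rate growth on the survival event.
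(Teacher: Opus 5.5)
Your skeleton matches the paper's: restricted exploration of $\G_n(c)$ along the loop from both vertices, comparison with a BGW process of mean $\sM(c)>1$, and a birthday-type merge. However, your first justification in Step 3 is false.

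If both explorations follow $c$ from type-$s_0$ roots and are stopped at the same loop position, their active half-edges lie in the \emph{same} class $H_{s_{j+1}\to s_{j+2}}$. That class is not compatible with itself unless $s_{j+1}=s_{j+2}$. Continuing along $c$ does not help, because it only produces classes $H_{s_i\to s_{i+1}}$. These are compatible with $H_{s_{j+1}\to s_{j+2}}$ only if $c$ also traverses the reversed pair $(s_{j+2},s_{j+1})$. This fails, for instance, for the loop $(s_0,s_1,s_2)\to(s_1,s_2,s_0)\to(s_2,s_0,s_1)\to(s_0,s_1,s_2)$ on three distinct types.

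Your fallback can be made to work: push exploration 2 one step into $\cN^{(s_{j+2})}$ and use the reached vertices' remaining half-edges in $H_{s_{j+2}\to s_{j+1}}$. But what it needs is $\Exp{D_{(s_{j+1},s_{j+2},s_{j+1})}}>0$, i.e.\ $\Prob{D_{(s_{j+2},s_{j+1})}\geq 2}>0$. This comes from Assumption \ref{assump:one_good_deg_greater_2} applied to the GOOD pair $(s_{j+2},s_{j+1})$, not from the means along $c$.

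The paper's device is cleaner. It explores from $o_2$ along the reversal $\overleftarrow{c}$ and stops each exploration at a completed cycle. Then $o_1$'s active half-edges lie in $H_{s_0\to s_1}$ and $o_2$'s lie in $H_{s_1\to s_0}$, which are exactly compatible. This uses $\sM(\overleftarrow{c})=\sM(c)$, which holds for the following reason. Each factor of $\sM(c)$ is a numerator symmetric under $s_i\leftrightarrow s_{i+2}$ divided by $\lim|H_{s_{i+1}\to s_i}|/n$. Since $|H_{a\to b}|=|H_{b\to a}|$, the denominators are merely re-indexed around the cycle. Note that the entrywise identity $\Exp{D_{(i,j,l)}}=\Exp{D_{(l,j,i)}}$ is not true in general, only the product identity.

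Also, GOODness of the pairs along $c$ does not follow from $\Exp{D_{u_i}}>0$ alone, since $\sum_x d^{(s_{i+1},s_i)}_x$ may be $o(n)$. You need the propagation from the GOOD starting pair, as in Lemma \ref{lem:good_pairs_c}.

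Second, the step you call the main obstacle is self-imposed. Only $\liminf>0$ is required, so the merge need not succeed with high probability. Take $g\sqrt{n}$ compatible active half-edges on each side, with $g=g(\eps)>0$. The failure probability is at most $(1-g\sqrt{n}/|H_{s_0\to s_1}|)^{g\sqrt{n}}\to \exp(-g^2/(\gamma^{(s_0)}\Exp{D_{(s_0,s_1)}}))<1$. This is then multiplied by the positive probability that both explorations reach that size.

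At scale $\eps\sqrt{n}$, under the paper's bounded-degree reduction, the exploration is coupled \emph{exactly} with the branching process. The expected number of conflicts in $\eps\sqrt{n}$ steps, bounded as in \eqref{eq:expected_conflicts}, is $O(\eps^2)$. What remains is generation-size control for a supercritical process. That control shows the stop at a completed cycle overshoots $\eps\sqrt{n}$ by at most a constant factor and leaves order $\sqrt{n}$ active half-edges.

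Taking $K_n=n^{1/2+\epsilon}$ or $\epsilon_0 n$ instead forces a domination argument with two difficulties. Conflicts are no longer rare: there are about $K_n^2/n\to\infty$ of them, each destroying active half-edges. Depletion must also be controlled for every class along the loop. You only outline this argument, so as written Step 2 is not a proof, even if it is probably completable along standard lines.
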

Let us first argue why this finishes the proof of the supercritical regime of Theorem \ref{thm:giants_MCMs}. 
    \begin{proof}[Proof of Theorem \ref{thm:giants_MCMs}(1.) subject to Proposition \eqref{prop:non_neg_conn_prob_randomvertices}]

    Let $\eta_*>0$ be such that $\Prob{o_1 \stackrel{\G_n}{\longleftrightarrow}o_2}>\eta_*$ for all large $n$, possible due to \eqref{eq:non_neg_conn_prob_randomvertices}, and note that
\begin{align*}
    \eta_*<\Prob{o_1\stackrel{\G_n}{\longleftrightarrow} o_2}=\Exp{\CProb{o_2\in \cC(o_1)}{G_n,o_1}}=\Exp{\frac{|\cC(o_1)|}{n-1}}\leq (1+o(1))\Exp{\frac{|\cC^{(1)}_n|}{n}},
\end{align*}
    true for all large $n$, implying \eqref{eq:supcrit_conclusion}.
\end{proof}

Next, let us go into the proof of Proposition \ref{prop:non_neg_conn_prob_randomvertices}.
We only prove it under the additional assumption that:
\begin{align*}
    \textrm{All degrees in $\G_n$ are uniformly bounded from above by a large constant $\bB>0$}.\numberthis\label{assump:unif_bdd_degs}
\end{align*}
There is a standard method to lift this assumption, and we refer the reader to \cite[p 160]{van2024random} where this is discussed for configuration models; the technique works verbatim for our case, and to avoid repetition, we do not mention the same steps.
 
Assumption \ref{assump:unif_bdd_degs} is unnecessary, for example, if we allow the variables $D_{(u,v,w)}$ to have finite $(1+\delta)$ moments, for some $\delta>0$. We comment more on this at the appropriate place.

\subsubsection{Connection of random vertices under uniformly bounded degrees}\label{sec:truncation_argument}
In this section, our goal is to prove the following proposition.
\begin{proposition}[Connection of random vertices under bounded degrees]\label{prop:connection_under_truncation}
    Assume all the conditions of Proposition \ref{prop:non_neg_conn_prob_randomvertices} and the uniform boundedness assumption \eqref{assump:unif_bdd_degs}. With $o_1$ and $o_2$ as before random vertices from $\cN^{(s_0)}$, we have
    \begin{align*}
        \liminf_{n \to \infty}\Prob{o_1\stackrel{\G_n}{\longleftrightarrow}o_2}>0.\numberthis \label{eq:connection_under_truncation}
    \end{align*}
\end{proposition}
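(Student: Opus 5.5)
Here is the plan. We use the loop $c=(u_0,\dots,u_l)$ with $\sM(c)>1$ fixed above, and work in $\G_n$ with all degrees at most $\bB$. We restrict to GOOD half-edges. This is allowed because every $u_i$ has $\Exp{D_{u_i}}>0$, so each consecutive type pair on the loop is GOOD.

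\textbf{Restricted exploration.} From $o_1\in\cN^{(s_0)}$ we run a restricted breadth-first exploration. Along the loop, a vertex of type $s_{i+1}$ that was reached from a vertex of type $s_i$ pairs only its half-edges in $H_{s_{i+1}\to s_{i+2}}$. The first step is handled by the root offspring: with probability bounded below, $o_1$ has a neighbour of type $s_1$. We then follow the types cyclically.

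\textbf{Coupling with a BGW tree.} By the coupling in the proof of Theorem \ref{thm:loc_lim}, now applied to this restricted exploration, the explored set agrees with the tree $\BP_{\bv,\pi'}$ of Proposition \ref{prop:GW_subtrees} for as long as no conflict occurs. That tree is a BGW tree whose offspring law has mean $\sM(c)>1$; the laws are the $n$-dependent ones, and their means converge by Assumption \ref{assump:offspring_degree_reg}. The conflict estimate \eqref{eq:expected_conflicts} with bounded degrees shows that the coupling survives for $m_n=o(\sqrt n)$ steps. Hence, with probability at least $\rho-o(1)$, where $\rho>0$ is the survival probability of the limiting BGW tree, the exploration from $o_1$ reaches a boundary of at least $n^{a}$ unpaired half-edges of a fixed type, say $H_{s_1\to s_2}$ from vertices of type $s_1$. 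Here $a\in(0,1/2)$ is fixed.

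\textbf{Growing to linear size.} We continue the exploration beyond $\sqrt n$ by a lower-bounding process. At each stage the number of half-edges removed so far is $\varepsilon n$ with $\varepsilon$ small. Depletion of $\varepsilon n$ half-edges and vertices changes each mean $\Exp{D_{u_i}}$ by at most $O(\varepsilon\bB)$, since degrees are bounded. Choosing $\varepsilon$ small therefore keeps the mean loop product above some $1+\delta$. We treat the exploration as a multi-type walk over one loop period and apply a martingale/Azuma bound with bounded increments, in the spirit of Janson--Luczak style explorations or \cite[Ch.~4]{van2024random}. This shows the active set keeps growing until $\varepsilon n$ half-edges of type $H_{s_1\to s_2}$ are active, with probability $1-o(1)$ given the $n^a$ start. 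The same holds independently from $o_2$, using the induction step of Theorem \ref{thm:loc_lim} to decouple the two explorations up to $o(n)$ used half-edges.

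\textbf{Joining the two clusters.} On the event that both clusters have $\varepsilon n$ unpaired half-edges in $H_{s_1\to s_2}$ and $H_{s_2\to s_1}$ respectively, we pair the remaining half-edges uniformly. The number of half-edges from cluster $1$ matched into cluster $2$ is then stochastically at least a hypergeometric with mean of order $\varepsilon^2 n$. The probability that none is matched is $e^{-\Theta(n)}$, so $o_1\leftrightarrow o_2$ with probability at least $\rho^2-o(1)$. If the boundary types of the two clusters are not compatible, we instead stop the cluster from $o_2$ at the appropriate phase of the loop; this is possible because the loop contains consecutive types $s_1,s_2$. This gives \eqref{eq:connection_under_truncation}.

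\textbf{Main obstacle.} The hard step is the growth phase from $\sqrt n$ to linear size. Depletion is non-uniform across the loop's types, and reducibility means other types can drain half-edges. I would control this by stopping once $\varepsilon n$ half-edges have been used and bounding the worst-case perturbation of every $\Exp{D_{u_i}}$ using $\bB$.
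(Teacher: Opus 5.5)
\textbf{Gap in the joining step.} A restricted exploration guided by $c$ only ever leaves active half-edges in sets of the form $H_{s_j\to s_{j+1}}$, because a type-$s_j$ vertex reached from $s_{j-1}$ pairs only towards $s_{j+1}$. You run the same $c$-guided exploration from $o_2$ and stop it at some phase. Its boundary then lies in some $H_{s_j\to s_{j+1}}$. That set is compatible with cluster $1$'s boundary $H_{s_1\to s_2}$ only if $(s_j,s_{j+1})=(s_2,s_1)$, that is, only if $c$ happens to traverse the pair $\{s_1,s_2\}$ in both directions.

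The fact that $c$ contains consecutive types $s_1,s_2$ is what gives cluster $1$ its $H_{s_1\to s_2}$ boundary; it says nothing about cluster $2$. So in general your final hypergeometric estimate has nothing to act on.

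The missing idea is the one the paper uses. The quantity $\Exp{D_{n,(i,j,l)}}=\sum_x d^{(j,i)}_xd^{(j,l)}_x/\sum_x d^{(j,i)}_x$ is symmetric in $i$ and $l$. Hence $\Exp{D_{(i,j,l)}}=\Exp{D_{(l,j,i)}}$, and so $\sM(\overleftarrow{c})=\sM(c)>1$. One then explores from $o_2$ guided by the reversed loop $\overleftarrow{c}$. This gives a boundary in $H_{s_1\to s_0}$, compatible with $o_1$'s boundary in $H_{s_0\to s_1}$.

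An alternative repair is to use the unpaired side half-edges in $H_{s_2\to s_1}$ carried by the type-$s_2$ vertices of a $c$-guided cluster. Their mean number per vertex is $\Exp{D_{(s_1,s_2,s_1)}}$, which is positive by Assumption \ref{assump:one_good_deg_greater_2} and Remark \ref{rem:assump}. Either way, some such argument must be supplied.

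\textbf{Comparison with the paper's route.} Your route is also much heavier than needed. The paper never leaves the $\sqrt n$ scale. It runs each restricted exploration only until the first generation end after $\eps\sqrt n$ steps (the stopping time $\tau'(o_1,\eps)$). Up to that point the branching-process coupling is exact with probability $1-O(\eps^2)$, by \eqref{eq:expected_restricted_conf} with $\Delta_n\le\bB$. Each cluster then has $g(\eps)\sqrt n$ compatible boundary half-edges. Pairing $g(\eps)\sqrt n$ of $o_2$'s boundary half-edges hits $o_1$'s boundary with probability at least $1-\exp(-g(\eps)^2/\Exp{D_{(s_0,s_1)}})>0$. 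This birthday-type bound suffices because the statement only asks for a positive $\liminf$. So the growth phase from $\sqrt n$ to $\eps n$, which you flag as the main obstacle, can be dropped entirely.

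If you keep the growth phase (it would give the sharper lower bound $\rho^2-o(1)$), two further points need attention.
\begin{itemize}
\item Decoupling two explorations that each consume $\eps n$ half-edges is not covered by the induction step of Theorem \ref{thm:loc_lim}. That step only handles $o(n)$ used half-edges, so you must redo the degree-perturbation argument for the conditional configuration model.
\item The growth argument is cleanest with the potential $\sum_i w_iA_i$, where $A_i$ counts active half-edges of phase $i$, $w_{i+1}=\lambda w_i/\nu_{i+1}$ and $\lambda=\sM(c)^{1/l}$. This choice is consistent around the loop, has drift at least $(\lambda-1)w_i-O(\eps)$ per pairing, and has bounded increments. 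You also need to stop at generation ends so that the active set lies in a single phase.
\end{itemize}
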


Recall that $\sM(c)>1$ for the loop $c=(u=u_0,\dots,u_l=u_0)$ where we write $u_i=(s_{i},s_{i+1},s_{i+2})$, the subscripts of $s$ being $\bmod l$.\footnote{Throughout this section, a subscript $j$ of $s_j$ is always $\bmod l$.}

\begin{lemma}[GOOD pairs in $c$]\label{lem:good_pairs_c}
    For any $i=0,1,\dots,l-1$, the pair $(s_i,s_{i+1})$ is $\rm GOOD$.
\end{lemma}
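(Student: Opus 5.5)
The plan is to read off positivity of means along the loop, and then relate these means to GOOD pairs via the specific form of the tensor $\Tilde{\bD}$ for the local limit. Since $c=(u_0,\dots,u_l)$ is a simple cycle in $D(G_S)$ with $\sM(c)=\prod_{i=0}^{l-1}\Exp{D_{u_i}}>1$, every factor is strictly positive: $\Exp{D_{(s_i,s_{i+1},s_{i+2})}}>0$ for all $i$, indices taken $\bmod l$. (Alternatively, each edge of $c$ lies in $D(G_S)$, so it has weight one, which gives the same conclusion.) It therefore suffices to show that $\Exp{D_{(a,b,d)}}>0$ forces both $(a,b)$ and $(b,d)$ to be GOOD. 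Applying this to each $u_i$ then gives that $(s_i,s_{i+1})$ is GOOD for every $i$.

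For that claim, I recall how the entries of $\Tilde{\bD}$ are specified for $\cT_\infty$.

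\textbf{Case $d=a$.} The entry is $\underline{D_{(b,a)}}$, and the shifted size-biased law in \eqref{eq:shifted_sb} is only well defined (with positive mean) when $\Exp{D_{(b,a)}}>0$. So $(b,a)$ is GOOD, hence $(a,b)$ is GOOD, and here $(b,d)=(b,a)$ as well.

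\textbf{Case $d\neq a$.} The entry is $D_{(a,b,d)}$, the limit of $D_{n,(a,b,d)}$. By Remark \ref{rem:assump}, its mean is the limit of
\begin{align*}
\frac{\sum_{x\in\cN^{(b)}}d^{(b,a)}_x d^{(b,d)}_x}{\sum_{x\in\cN^{(b)}}d^{(b,a)}_x}.
\end{align*}
First, the very definition of $W_n(a,b)$ requires nonzero entries of $\bd_{(b,a)}$. To exclude the degenerate situation where the denominator is $o(n)$, I will argue as in the proof of Lemma \ref{lem:sn_small}, using Cauchy--Schwarz and \eqref{assump:regularity_1_2}. If $(b,a)$ were BAD, then $\frac{1}{N^{(b)}}\sum_x (d^{(b,a)}_x)^2\to 0$, and the numerator would be $o(n)$.

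The denominator need not be $\Theta(n)$, and this is the main obstacle. The convention I expect the author relies on is that the local limit, and hence $G_S$ for $\cT_\infty$, only involves $D_{(a,b,d)}$ for GOOD $(a,b)$. This is consistent with the coupling in the proof of Theorem \ref{thm:loc_lim}, where BAD half-edges are never encountered, so that entries with $(a,b)$ BAD are taken to be zero. With this convention, a positive mean forces $(a,b)$ to be GOOD. Then, with the denominator $\Theta(n)$, Cauchy--Schwarz gives
\begin{align*}
\frac{1}{n}\sum_x d^{(b,a)}_x d^{(b,d)}_x\le\Big(\frac{1}{n}\sum_x (d^{(b,a)}_x)^2\Big)^{1/2}\Big(\frac{1}{n}\sum_x (d^{(b,d)}_x)^2\Big)^{1/2}.
\end{align*}
The second factor tends to $0$ if $(b,d)$ is BAD, since then $\Exp{D_{(b,d)}^2}=0$. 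Hence the mean would vanish, so $(b,d)$ must be GOOD.

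Combining both cases, each consecutive pair $(s_i,s_{i+1})$ appears as either the first or the second pair of some $u_j$ on the cycle (indeed, as the first pair of $u_i$), and is therefore GOOD.
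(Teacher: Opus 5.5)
\textbf{The gap.} Your reduction to the claim that $\Exp{D_{(a,b,d)}}>0$ forces both $(a,b)$ and $(b,d)$ to be GOOD fails in the case $d\neq a$. The convention you invoke to rescue it is not the paper's. The tensor $\Tilde{\bD}$ of $\cT_\infty$ has $(i,j,l)$-th entry $D_{(i,j,l)}$ from Assumption \ref{assump:offspring_degree_reg} for every $l\neq i$, and that assumption is imposed whenever $\bd_{(j,i)}(n)$ has a nonzero entry, whether or not the pair is GOOD.

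Under these definitions the claim is false. Take distinct parts $a,b,d$, and in each of them $\lfloor\sqrt n\rfloor$ special vertices with degree $2$ into each of the other two parts. Give all other vertices of these parts no edges among $\cN^{(a)},\cN^{(b)},\cN^{(d)}$. Then $D_{n,(a,b,d)}\equiv 2$, and likewise $D_{n,(b,d,a)}\equiv D_{n,(d,a,b)}\equiv 2$. Hence $(a,b,d)\to(b,d,a)\to(d,a,b)\to(a,b,d)$ is a simple cycle in $D(G_S)$ with $\sM(c)=8>1$, although all three pairs are BAD.

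So the lemma is not true for an arbitrary loop with $\sM(c)>1$. It must use that $c$ is the loop supplied by the failure of \eqref{eq:ext_BP_condition}: some vertex $u$ of $c$ lies in $\cC_D(v)$ for some $v=(s,s',s'')\in V(s,s')$ with $p_s>0$ and $p^{(s,s')}_0<1$. Your argument never uses this reachability, and that is the missing idea. Nor is your convention harmless bookkeeping: showing that zeroing those entries leaves $\cT_\infty$ and the criterion unchanged needs exactly the propagation argument below.

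\textbf{The repair.} The valid half of your argument suffices. Since $p^{(s,s')}_0<1$, we have $\Exp{D_{(s,s')}}>0$, so $(s,s')$ is GOOD. Your Cauchy--Schwarz computation proves precisely this: if $(a,b)$ is GOOD (so the denominator is $\Theta(N^{(b)})$) and $\Exp{D_{(a,b,d)}}>0$, then $(b,d)$ is GOOD. For $d=a$ this is immediate.

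Now take a directed path $v=v_0\to\cdots\to v_r=u$ in $D(G_S)$. If $r\geq 1$, every vertex on it has positive mean by the edge weights; if $r=0$, then $u=v$ lies on $c$ and has positive mean. Inducting along the path carries GOODness from $(s,s')$ to both pairs of $u$. Every vertex of $c$ has positive mean, so inducting once more around the loop makes every $(s_i,s_{i+1})$ GOOD. This is the paper's proof.
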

\begin{proof}
Let us begin with a general observation. We claim that for any $s,s',s'' \in S$, 
\begin{align*}
    (s,s') \textrm{ is {\rm GOOD} and } \Exp{D_{(s,s',s'')}}>0 \implies (s',s'') \textrm{ is {\rm GOOD}.}\numberthis \label{claim:good_pairs}
\end{align*}
To check this, note that by definition
\begin{align}
    \Exp{D_{n,(s,s',s'')}}=\frac{\frac{1}{N^{(s')}}\sum_{x=1}^{N^{(s')}}d_x^{(s',s)}(n)d_x^{(s',s'')}(n)}{\frac{1}{N^{(s')}}\sum_{x=1}^{N^{(s')}}d_x^{(s',s)}(n)}&=O\left(\sqrt{\Exp{(d_{o}^{(s,s')}(n))^2} \Exp{(d_{o}^{(s',s'')}(n))^2}} \right)\\&=O\left(\sqrt{\Exp{(D_{n,(s,s')})^2} \Exp{(D_{n,(s',s'')})^2}} \right),
\end{align}
by Cauchy-Schwarz, and the fact that $(s',s)$ is GOOD (so that $(N^{(s')})^{-1}\sum_{x=1}^{N^{(s')}}d_x^{(s',s)}(n)=\Theta(1)$), and where $o$ above is a uniformly sampled random vertex of $\cN^{(s')}$. Now note that if $(s',s'')$ was bad, then the right-hand side in the above display would be $o(1)$ as $n \to \infty$, which contradicts $\Exp{D_{(s,s',s'')}}>0$.

To use \eqref{claim:good_pairs} effectively, recall that we assume the left-hand side of \eqref{eq:ext_BP_condition} is $>1$. This implies the existence of a pair $(s,s')$ that is GOOD, a $v=(s,s',s'')\in V(G_S)$ and a $u\in \cC_D(v)$ satisfying $\Exp{D_u}>0$, with $u$ being in the loop $c$ satisfying $\sM(c)>1$. 

Observe that for any directed path $(v_0=v\rightarrow v_1 \rightarrow \dots \rightarrow v_r=u)$ in $D(G_S)$, by inductively using \eqref{claim:good_pairs} (starting with $v_0=v$), for any vertex $(t,t',t'')$ on this path, we must have that both $(t,t')$ and $(t',t'')$ are GOOD. In particular, if we write $u=(s_j,s_{j+1},s_{j+2})$ for some $j=0,1,\dots,l-2$, we have both $(s_j,s_{j+1})$ and $(s_{j+1},s_{j+2})$ are GOOD. Finally, for any $u$ in the loop $c$, we have $\Exp{D_u}>0$, since $\sM(c)>1$. Thus, taking the directed path in $c$ that goes from $u$ to the vertex $(s_i,s_{i+1},s_{i+2})$, and arguing inductively again using \eqref{claim:good_pairs}, we have $(s_i,s_{i+1})$ is GOOD (as is $(s_{i+1},s_{i+2})$).
\end{proof}

To prove \eqref{eq:connection_under_truncation}, we essentially do a \emph{restricted exploration} in the graph $\G_n$. First observe that for any subgraph $H\subseteq \G_n$, it is enough to exhibit a path from $o_1$ to $o_2$ using only edges in $H$, to conclude \eqref{eq:connection_under_truncation}. We choose this subgraph cleverly. Namely, recall we write $c=(u=u_0,u_1,\dots,u_l=u_0)$, where $u_i=(s_i,s_{i+1},s_{i+2})$.
\begin{definition}[The subgraph $\G_n(c)$]\label{def:graph_gnc}
    Define $\G_n(c)$ to be the subgraph of $\G_n$ induced by the set of edges $\cup_{i=0}^{l-1} E_{s_i,s_{i+1}}$, where recall $E_{i,j}$ is the subset of edges $\{\{u,v\}\in E(\G_n):u \in \cN^{(i)},v\in \cN^{(j)}\}$.
\end{definition}
In particular, note that since all the pairs $(s_i,s_{i+1})$ are GOOD due to Lemma \ref{lem:good_pairs_c}, $\G_n(c)$ can be sampled by first sampling bipartite configuration models $CM_n^{s_i,s_{i+1}}$ (unipartite if $s_i=s_{i+1}$) for every $i=0,\dots,l-1$ on the vertex bipartitions $\cN^{(s_i)}\cup \cN^{s_{i+1}}$ (with appropriate degree sequences), and taking the union of all these edges. We will show that already in this subgraph, $o_1$ and $o_2$ are in the same connected component, which proves \eqref{eq:connection_under_truncation}. Let us next discuss the relevant component exploration processes for this purpose.

\paragraph{Restricted exploration from $o_1$.} To prove \eqref{eq:connection_under_truncation}, we run special exploration processes from both $o_1$ and $o_2$ in $\G_n(c)$ simultaneously. Let us first describe the exploration from $o_1$. Let $A_c(o_1,0)$ be $\{o_1\}$, where the subscript $c$ is to indicate that our exploration depends on the loop $c$. As we did in the proof of Theorem \ref{thm:loc_lim}, we will start exploring from $o_1$ in a breadth-first manner, but in a \emph{restricted} fashion as we describe in what follows.

$A_c(o_1,0)=\{o_1\}$ is the zeroth step of the exploration. To construct $A_c(o_1,1)$, we reveal a half edge $h\in H_{s_0\rightarrow s_1}$ incident to $o_1$ and pair it with a half-edge sampled uniformly at random from $H_{s_1\rightarrow s_0}$, where if there is no such half-edge we stop exploring at this point. Thus, assuming this step goes through, $A_c(o_1,1)$ consists of $o_1$ and a neighbor of $o_1$ in $\cN^{(s_1)}$. 

We want to now describe the construction of $A_c(o_1,t+1)$, given that we have already constructed $A_c(o_1,t)$, for $t\geq 0$. For this, for any $v \in A_c(o_1,t)$ with distance $r\geq 0$ from $o_1$, let us write \begin{align*}
    r=(l-1)p+r' \textrm{ with } 0\leq r' < l-1,\numberthis \label{eq:r_division}
\end{align*} and define $H_c(v,t)$ to be the set of unpaired half-edges incident to $v$ and in $H_{T(v)\rightarrow s_{r'+1}}$. Thus, for example, $H_{c}(o_1,t)$ is the set of unpaired half-edges incident to $o_1$ and in $H_{s_0 \rightarrow s_1}$ after constructing $A_c(o_1,t)$; if $A_c(o_1,1)$ consists of the vertices $o_1$ and $v$, then $H_c(v,t)$ is the set of unpaired half-edges incident to $v$ after having constructed $A_c(o_1,t)$ and also in $H_{s_1\rightarrow s_2}$, etc. 

Then, for $t\geq 0$, having constructed $A_c(o_1,t)$ and with the information $H_v(t)$ for $v\in A_c(o_1,t)$ at hand, to construct $A_c(o_1,t+1)$, we start by considering a vertex $v \in A(o_1,t)$ with minimal distance from $o_1$ among all such vertices satisfying $|H_v(t)|>0$. Let $v$ have distance $r$ from $o_1$, where we express $r$ as in \eqref{eq:r_division}. Let us then take a half-edge $h \in H_v(t)$, and randomly sample a uniform half-edge $h'$ not already paired in one of the previous steps from $H_{T_v \rightarrow r'+1}$, which is incident to some $u \in \cN^{(s_{r'+1})}$, and pair $h$ and $h'$. We include the vertex $u$ in $\cN^{(s_{r'+1})}$ in $A_c(o_1,t)$ to construct $A_c(o_1,t+1)$. 

Define the stopping time
\begin{align*}
    \tau(o_1):=\inf\{t \geq 0: A_c(o_1,t)\textrm{ is not a tree}\}.
\end{align*}
Let us observe the following:
\begin{itemize}
    \item Since we start the exploration from the vertex $o_1$ with type $s_0$, the set $H(o_1,0)$ consists only of half-edges from $H_{s_0\rightarrow s_1}$. Thus, the vertex we include in $A_c(o_1,0)$ to form $A_c(o_1,1)$ has type $s_1$. In general, by an inductive argument, it can be checked that as long as $t<\tau(o_1)$, i.e., $A_c(o_1,t)$ does not have a cycle, any vertex $v\in A_c(o_1,t)$ at distance $r$ from $o_1$ has type $s_{r'}$, where $r'$ is a function of $r$, expressed as in \eqref{eq:r_division}. 
    \item In particular, note that the different types of vertices that we see at different distances from $o_1$ in the exploration cluster $A_c(o_1,t)$ for $t<\tau(o_1)$ can be seen as a function of the loop $c$. We call this exploration as `exploring from $o_1$ as \emph{guided} by the loop $c'$.
\end{itemize}

Let us define $D_c(o_1,t)$ to be the set of unpaired half-edges incident to some vertex in $A_c(o_1,t)$. Thus $D_c(o_1,t)$ can be expressed as a disjoint union
\begin{align*}
    D_c(v,t)=\cup_{v \in A_c(o_1,t)}H_c(v,t).
\end{align*}

\begin{remark}\label{rem:expl_any_loop_c_stop_expl}
    Note that the (restricted) exploration from $o_1$ can be defined on $\G_n(c)$ for any loop $c$, irrespective of whether $\sM(c)>1$ or not. Further, it could be that at a particular stage of this process, the set of half-edges from which we are to sample, to pair to our current unpaired half-edge, is simply empty. In that case, we stop exploring at this point. If this time instant is say $t$, our convention is $A_c(o_1,s)=A_c(o_1,t)$ for all $s>t$.
\end{remark}

\paragraph{Simultaneous restricted exploration from $o_1$ and $o_2$.} We now describe how to do the restricted exploration from both $o_1$ and $o_2$ simultaneously. Let us begin with an easy but useful observation. Note from Remark \ref{rem:assump} that the expression
\begin{align*}
    \Exp{D_{n,(i,j,l)}}=\frac{\sum_{x=1}^{N^{(j)}}d_x^{(j,i)}d_x^{(j,l)}}{\sum_{x=1}^{N^{(j)}}d_x^{(j,i)}}
\end{align*}
is invariant under interchanging $i$ and $l$, so that $\Exp{D_{(i,j,l)}}=\Exp{D_{(l,j,i)}}.$ Thus, since $\cM(c)>1$, we also have $\cM(\overleftarrow{c})>1$, where $\overleftarrow{c}$ is the loop $v_0,v_1,\dots,v_l=v_0$, where $v_0=(s_0,s_{l-1},s_{l-2}),v_1=(s_{l-1},s_{l-2},s_{l-3}),\dots,v_{l-2}=(s_2,s_1,s_0),$ $v_{l-1}=(s_1,s_0,s_{l-1}),v_{l}=(s_0,s_{l-1},s_{l-2})=v_0$. We call $\overleftarrow{c}$ the \emph{reversal} of $c$. Note that
    \begin{align*}
        v_j=(s_{l-j},s_{l-(j+1)},s_{l-(j+2)})\textrm{ for }1\leq j \leq l-2.
    \end{align*}

In particular, it is possible to instead start exploring in a restricted fashion from $o_2$ in the graph $\G_n(c)$, guided now by the loop $\overleftarrow{c}$, and form the exploration clusters $(A_{\overleftarrow{c}}(o_2,t))_{t \geq 0}$ analogous to the clusters $A_c(o_1,t)$, as described in the last section.

However, we want to do these explorations simultaneously. We do this formally via a discrete time process defined as follows. Fix $\eps>0$. Later we will let $\eps\to 0$, so it is useful to think of it as a small constant. We define a Markov process $(\cA_t,\cB_t)_{t \geq 0}$ as follows. We let $\cA_0:=\{o_1\}$ and $\cB_0:=\{o_2\}$. Define the (possibly infinite) stopping time
\begin{align*}
    \tau'(o_1,\eps):=\inf\{t>\eps \sqrt{n}:\forall\;\;v\in A_c(o_1,t),H_c(v,t)\subset H_{s_0\rightarrow s_1}\}. \numberthis \label{eq:stop_time_from_o_1}
\end{align*}
On the event $\tau'(o_1,\eps)<\infty$, we define $(\cA_t,\cB_t):=(A_c(o_1,t),\cB_0)$ for all $0\leq t \leq \tau'(o_1,\eps)$. Conditionally on $(\cA_t,\cB_t)_{0 \leq t \leq \tau'(o_1,\eps)}$, we do a restricted exploration guided by $\overleftarrow{c}$ from $o_2$ to define the sets $\cB_t$ for $t>\tau'(o_1,\eps)$. Concretely, for each $0\leq i \leq l-1$, let $H'_{s_0\rightarrow s_1}$ be the set of all half-edges in $H_{s_0\rightarrow s_1}$ not already paired in the first $\tau'(o_1,\eps)$ steps of the process. Take a half-edge $h$ from $H'_{s_0\rightarrow s_{l-1}}$ incident to $o_2$, if any. Randomly sample a half-edge $h'$ from $H'_{s_{l-1},s_0}$ and pair it to $h$, and include the vertex $v$ to which $h'$ is incident into $\cB_{\tau'(o_1,\eps)}$ to create $\cB_{\tau'(o_1,\eps)+1}:=\cB_{\tau'(o_1,\eps)}\cup \{v\}=\{o_2,v\}$, while keeping $\cA_{\tau'(o_1,\eps)+1}=\cA_{\tau'(o_1,\eps)}=A_c(o_1,t)$.

Thus, beyond time $\tau'(o_1,\eps)$, we \emph{freeze} the evolution of the set $\cA_t$ at $\cA_{\tau'(o_1,\eps)}$, and only evolve the set $\cB_{t}$. The latter evolution is exactly the restricted exploration from $o_2$ guided by the loop $\overleftarrow{c}$, with the caveat that we are not allowed to sample any half-edge that has already been used to create $\cA_{\tau'(o_1,\eps)}=A_c(o_1,\tau'(o_1,\eps))$. 

More formally, we note that conditionally on having constructed $A_c(\tau'(o_1,\eps))$, the rest of the half-edge pairings in $\G_n(c)$, have the same distribution as the half-edge pairings of a smaller graph $\G_n'({c}
)$, also a union of bipartite configuration models just like $\G_n(c)$, with the same vertex set as of $\G_n(c)$, where for any vertex $v\in \cN^{(s_i)}$ its degree in $\cN^{(s_j)}$ for $j=i-1,i+1$ in the graph $\G_n'({c})$ is 
\begin{align*}
    d^{(s_j)}(v)':=d^{(s_j)}(v)-d^{(s_j)}(v,\tau'(o_1,\eps)),
\end{align*}
where $d^{(s_j)}(v,\tau'(o_1,\eps))$ is the number of half-edges incident to $v$ and in $H_{s_i \rightarrow s_j}$ that has already been paired in constructing $A_c(o_1,\tau'(o_1,\eps))$. Then, if we define $(A'_{\overleftarrow{c}}(o_2,t))_{t \geq 0}$ the restricted exploration process from $o_2$ as guided by the loop $\overleftarrow{c}$ in the graph $\G_n'({c})$, then for any $t>0$, we define 
\begin{align*}
    (\cA_{\tau'(o_1,\eps)+t},\cB_{\tau'(o_1,\eps)+t}):=(A_c(o_1,\tau'(o_1,\eps)),A'_{\overleftarrow{c}}(o_2,t)).
\end{align*}
Let $H'_{\overleftarrow{c}}(v,t)$ be the analogue of $H_{{c}}(v,t)$ in $\G_n'({c})$. Analogous to $\tau'(o_1,\eps)$, define the stopping time 
\begin{align*}
    \tau'(o_2,\eps):=\inf\{t>\eps \sqrt{n}:\forall\;\;v\in A'_{\overleftarrow{c}}(o_2,t),H'_{\overleftarrow{c}}(v,t)\subset H_{s_1\rightarrow s_0}\}. \numberthis \label{eq:stop_time_from_o_2}
\end{align*}
Let us also analogously define $D_{\overleftarrow{c}}(o_2,t)$ to be the set of all unpaired half-edges incident to some vertex in $A'_{\overleftarrow{c}}(o_2,t)$.

\begin{proposition}[Enough unpaired half-edges from the cluster of $o_1$]\label{prop:enough_unpaired_o_1_fresh}
   Recall the graph $\G_n$ from Corollary \ref{cor:sample_Gn}, and the graph $\G_n(c)$ corresponding to any loop $c$ with $\sM(c)>1$ as defined in Definition \ref{def:graph_gnc}. Under all the assumptions of Corollary \ref{cor:sample_Gn}, there exists a function $g(\eps)$ satisfying $g(\eps)>0$ for $\eps>0$ and $\lim_{\eps\to 0}g(\eps)=0$ such that
    \begin{align*}
       \lim_{\eps \to 0} \liminf_{n \to \infty}\Prob{\tau'(o_1,\eps)<\sqrt{\eps n},|D_c(o_1,\tau'(o_1,\eps))|>g(\eps)\sqrt{n}}=1. \numberthis \label{eq:enough_unpaired_o_1_fresh}
    \end{align*}
\end{proposition}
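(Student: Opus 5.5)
The plan is to compare the restricted exploration $(A_c(o_1,t))_{t\ge 0}$ with a Bienaym\'e--Galton--Watson process built from the loop $c$, and then to read off both events in \eqref{eq:enough_unpaired_o_1_fresh} from elementary growth properties of that process. I make one caveat explicit at the outset. If the loop-guided tree from $o_1$ dies out, the sets $H_c(v,t)$ become empty. Then $D_c(o_1,\tau'(o_1,\eps))$ is small, and the second event fails. So the limit $1$ can only come from the survival part. I therefore expect the step the statement really relies on to be: \emph{on the event that the loop-guided exploration from $o_1$ is still alive at time $\eps\sqrt n$, both events hold with probability tending to one as $\eps\to 0$.} I will organise the argument around this, and treat the extinction event as the main obstacle (see the last paragraph).

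\textbf{Step 1 (coupling).} First I rerun the coupling from the proof of Theorem \ref{thm:loc_lim}, restricted to $\G_n(c)$. By Lemma \ref{lem:good_pairs_c} every pair $(s_i,s_{i+1})$ is GOOD, so each step draws from a pool of $\Theta(n)$ compatible half-edges. Under \eqref{assump:unif_bdd_degs}, a conflict at step $s$ (a repeated half-edge or a repeated vertex) has probability $O(s\bB/n)$. Summing over $s\le \sqrt{\eps n}$ gives $O(\eps\bB)$ expected conflicts in total. So up to time $\sqrt{\eps n}$, the restricted exploration agrees with the guided tree except on an event of probability $O(\eps)$. In the guided tree, a vertex at distance $r$ from $o_1$ has type $s_{r'}$, as in \eqref{eq:r_division}, and has $D_{n,(s_{r'-1},s_{r'},s_{r'+1})}$ children. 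By Proposition \ref{prop:GW_subtrees}, the vertices at distances that are multiples of $l-1$ form a BGW tree with offspring law converging to $\sD(\pi)$, whose mean is $\sM(c)>1$.

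\textbf{Step 2 (timing of $\tau'$).} Because the exploration is breadth-first, the condition ``every unpaired half-edge lies in $H_{s_0\to s_1}$'' in \eqref{eq:stop_time_from_o_1} holds precisely when a layer at a distance that is a multiple of $l-1$ has just been fully revealed. Let $m$ be the BGW generation in progress at time $\eps\sqrt n$. Then $\tau'(o_1,\eps)$ is at most $\eps\sqrt n$ plus the cost of completing the remaining $\le l-1$ intermediate layers. Each vertex has at most $\bB$ children, so this extra cost is at most $\bB^{\,l}$ times the size of generation $m$, which is itself at most $\eps\sqrt n$. Hence
\[
\tau'(o_1,\eps)\le (1+\bB^{\,l})\,\eps\sqrt n<\sqrt{\eps n}
\]
as soon as $\eps<(1+\bB^{\,l})^{-2}$. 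Therefore the first event holds deterministically on survival, once $\eps$ is small.

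\textbf{Step 3 (unpaired half-edges).} For a supercritical BGW tree with bounded offspring, the Kesten--Stigum/$L^2$ martingale argument gives $Z_{m+1}/\sum_{j\le m}Z_j\to \sM(c)-1>0$ almost surely on survival. This makes the current frontier a constant fraction of everything explored so far. The frontier of unpaired $s_0\to s_1$ half-edges at time $\tau'$ therefore satisfies $|D_c(o_1,\tau'(o_1,\eps))|\ge c_0\,\eps\sqrt n$, with probability tending to one on survival. So I take $g(\eps)=c_0\eps/2$, and the coupling error from Step 1 is absorbed into the $O(\eps)$ term.

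\textbf{Main obstacle (extinction).} The hard step is extinction. On that event $\tau'$ is finite, but the frontier is empty. I would first try to show that extinction of the $c$-guided tree from $o_1$ can be excluded. Since $o_1$ is uniform in $\cN^{(s_0)}$, however, its guided tree dies with a fixed positive probability $q$. I would therefore prove the result in the form ``$\ge 1-q-O(\eps)$'', and use it in Proposition \ref{prop:connection_under_truncation} only through a positive $\liminf$, which is all that is needed there.
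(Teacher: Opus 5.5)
Your caveat is correct, and it is an error in the statement itself rather than a weakness of your approach: the value $1$ in \eqref{eq:enough_unpaired_o_1_fresh} is false in general. Suppose $\Prob{D_{(s_0,s_1)}=0}>0$, which Assumption \ref{assump:one_good_deg_greater_2} permits. Then with probability tending to $\Prob{D_{(s_0,s_1)}=0}$ the vertex $o_1$ has no half-edge in $H_{s_0\to s_1}$. In that case $D_c(o_1,\cdot)=\emptyset$ and the event fails for every $\eps$.

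The paper's proof breaks only at its last step. There it uses $\xi''_j/\sM(c)^j\to W$ to assert $\Prob{\xi''_{R-1}>R}\ge 1-o_R(1)$. Since $W=0$ on extinction, this probability actually tends to the survival probability. So the argument gives $\liminf_{\eps\to 0}\liminf_{n\to\infty}\Prob{\cdots}\ge \chi$, where $\chi>0$ is the survival probability of $\cY_\infty$ from Lemma \ref{lem:on_Yn_Y-infty} (your $1-q$). As you say, only positivity is used in Proposition \ref{prop:connection_under_truncation}. Corollary \ref{cor:enough_unpaired_o_2_fresh} must be weakened in the same way.

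Your Step 2 is a genuinely simpler route than the paper's. The paper controls the overshoot $\Xi_{t(\eps)+1}$ through the concentration event of Proposition \ref{prop:bound_gen_size_ind} and Lemma \ref{lem:seq_grow}. You instead note that finishing at most $l$ further layers costs at most $(\bB+\dots+\bB^{l})$ times the current layer size (not $\bB^l$, but this is immaterial). That bound is deterministic and holds directly in the graph. The price is a heavier reliance on \eqref{assump:unif_bdd_degs}.

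Step 3, however, has a gap as written. Kesten--Stigum gives an almost sure ratio limit for one fixed BGW tree. But the tree you are coupled to is $\cT_n$, whose offspring laws depend on $n$, and the generation you evaluate, $p\approx \log(\eps\sqrt n)/\log \sM(c)$, diverges with $n$. A naive coupling with the limit tree over $\sqrt{\eps n}$ vertices would need a total variation rate $o(n^{-1/2})$, which Assumptions \ref{assump:degree_regularity} and \ref{assump:offspring_degree_reg} do not provide.

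What is needed is a growth estimate from a fixed generation $R$ onward that is uniform in $n$:
\begin{itemize}
\item condition on $\xi_{lR}=\alpha$ (for fixed $R$, the law of $\xi_{lR}$ does converge to that of $\xi'_R$);
\item show that all later generations stay within constant factors of $\alpha m_n^{t}$, with failure probability $O(\alpha^{1-2\gamma})$ uniformly in $n$;
\item let $R\to\infty$, using $\Prob{\xi'_R>R}\to\chi$.
\end{itemize}
This is exactly Proposition \ref{prop:bound_gen_size_ind}. A Doob $L^2$-maximal bound for $Z'_p/m_n^p$, with constants depending only on $\bB$ and $\inf_n m_n>1$, would serve equally well. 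Either one also supplies the finite-horizon meaning of ``on survival'' that your Step 3 leaves implicit.

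The same control must cover the intermediate layers $Z_{pl+1},\dots,Z_{pl+l-1}$. These enter both $\tau'$ and $|D_c(o_1,\tau'(o_1,\eps))|$; the latter counts half-edges, i.e.\ the layer after the frontier.

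Finally, the guided types have period $l$, not $l-1$. The modulus in \eqref{eq:r_division}, which you inherited, is a typo, corrected in Definition \ref{def:BP_cT_n}.
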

We will prove Proposition \ref{prop:enough_unpaired_o_1_fresh} later. But first, let us observe an immediate corollary of this result, pertaining to the restricted exploration from $o_2$. Recall the smaller graph $\G_n'({c})$ which is defined conditional on the first $\tau'(o_1,\eps)$ steps of the process $(\cA_t,\cB_t)_{t \geq 0}$, and the restricted exploration from $o_2$ in $\G_n'({c})$ guided by the loop $\overleftarrow{c}$. Let us also define the event
\begin{align*}
    \cF(o_1):=\{\tau'(o_1,\eps)<\sqrt{\eps n},|D_c(o_1,\tau'(o_1,\eps))|>g(\eps)\sqrt{n}\},\numberthis \label{eq:event_Fo1}
\end{align*}
where $g(\cdot)$ is the function appearing in Proposition \ref{prop:enough_unpaired_o_1_fresh}.

\begin{corollary}[Enough unpaired half-edges from the cluster of $o_2$]\label{cor:enough_unpaired_o_2_fresh}
    With the same function $g(\eps)$ as appearing in Proposition \ref{prop:enough_unpaired_o_1_fresh}, we have
    \begin{align*}
        \lim_{\eps \to 0}\liminf_{n \to \infty}\CProb{\tau'(o_2,\eps)<\sqrt{\eps n},|D_{\overleftarrow{c}}(o_2,\tau'(o_2,\eps))|>g(\eps)\sqrt{n}}{\cF(o_1)}=1.
    \end{align*}
\end{corollary}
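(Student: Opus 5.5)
Conditionally on $\cF(o_1)$, we reduce to Proposition \ref{prop:enough_unpaired_o_1_fresh} applied to a slightly perturbed graph. The key point is that, given the history of the process $(\cA_t,\cB_t)$ up to time $\tau'(o_1,\eps)$, the remaining pairings have the law of the union of bipartite (or unipartite) configuration models $\G_n'(c)$. The degree sequences of $\G_n'(c)$ differ from those of $\G_n(c)$ only at the vertices of $A_c(o_1,\tau'(o_1,\eps))$, and at most one half-edge has been removed per exploration step. On $\cF(o_1)$ this is at most $2\sqrt{\eps n}$ half-edges, and under the bounded degree assumption \eqref{assump:unif_bdd_degs} at most $O(\sqrt{\eps n})=o(n)$ vertices are affected. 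Consequently the perturbed degree sequences still satisfy Assumptions \ref{assump:part_i_size}, \ref{assump:degree_regularity} and \ref{assump:offspring_degree_reg} with the \emph{same} limits. This is the same reasoning used in the induction step of the proof of Theorem \ref{thm:loc_lim}. In particular, all pairs $(s_i,s_{i+1})$ remain GOOD, and the limiting means $\Exp{D_{(s_i,s_{i+1},s_{i+2})}}$ are unchanged.

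Next, $o_2$ is uniform on $\cN^{(s_0)}\setminus\{o_1\}$. Since $|A_c(o_1,\tau'(o_1,\eps))|\le \sqrt{\eps n}+1=o(n)$, the probability that $o_2$ lies in this cluster is $o(1)$. On the complement, $o_2$ is (up to total variation $o(1)$) a uniform vertex of type $s_0$ in $\G_n'(c)$ with all its original half-edges intact. We then invoke the symmetry $\Exp{D_{(i,j,l)}}=\Exp{D_{(l,j,i)}}$, already noted before the statement, to get $\sM(\overleftarrow{c})=\sM(c)>1$. The restricted exploration from $o_2$ guided by $\overleftarrow{c}$ in $\G_n'(c)$ is therefore exactly the object treated by Proposition \ref{prop:enough_unpaired_o_1_fresh}, with $c$ replaced by $\overleftarrow{c}$ and $\G_n(c)$ replaced by $\G_n'(c)$. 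Note that the stopping condition in \eqref{eq:stop_time_from_o_2}, with half-edges in $H_{s_1\to s_0}$, is precisely the analogue of \eqref{eq:stop_time_from_o_1} for the reversed loop, whose first step is from $s_0$ to $s_{l-1}$ and whose final vertex triple returns to $s_0$.

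The main obstacle is uniformity. Proposition \ref{prop:enough_unpaired_o_1_fresh} is stated for a fixed deterministic degree sequence, whereas here the degree sequence of $\G_n'(c)$ is random, depending on the first phase. To handle this, we will inspect the proof of Proposition \ref{prop:enough_unpaired_o_1_fresh} and check that its lower bound depends only on the limiting quantities: $\sM(c)$, the means $\Exp{D_{(s_i,s_{i+1},s_{i+2})}}$, the sizes $|H_{s_i\to s_{i+1}}|=\Theta(n)$, and the degree bound $\bB$. It should also depend on the perturbed sequences only through conditions that hold uniformly over all sequences obtained by deleting at most $2\sqrt{\eps n}$ half-edges. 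Under \eqref{assump:unif_bdd_degs}, deleting $O(\sqrt n)$ half-edges changes each empirical mean $\Exp{D_{n,(i,j,l)}}$ by $O(\bB^2\sqrt{n}/n)=o(1)$ uniformly. Hence the branching-process comparison used there, a BGW process with mean close to $\sM(\overleftarrow c)>1$ observed at times multiple of $l$, goes through with error terms uniform over the conditioning. The resulting conditional probability bound holds with the same $g(\eps)$, uniformly on $\cF(o_1)$. Taking $\liminf_n$ and then $\eps\to0$ gives the claim.
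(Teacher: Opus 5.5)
Your proposal is correct and follows the paper's proof. Both condition on the first-phase history up to $\tau'(o_1,\eps)$, observe that the remaining pairings form the union of configuration models $\G_n'(c)$, whose degree data (having lost only $O(\sqrt{\eps n})$ half-edges) satisfy the assumptions with the same limits, and apply Proposition \ref{prop:enough_unpaired_o_1_fresh} to the reversed loop using $\sM(\overleftarrow{c})=\sM(c)>1$.

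Your treatment of uniformity over the conditioning, and of $o_2$ avoiding the cluster of $o_1$, spells out what the paper leaves implicit. One small imprecision, shared with the paper: the stopping rule \eqref{eq:stop_time_from_o_2}, with frontier pairing $s_1\to s_0$, is not literally \eqref{eq:stop_time_from_o_1} with $c$ replaced by $\overleftarrow{c}$, which would use $H_{s_0\to s_{l-1}}$. It is that rule shifted by one generation; the same proof handles it, possibly with a smaller $g$, which is harmless after taking the minimum of the two functions.
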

    \begin{proof}
        Consider any sequence of deterministic sets $({\rm A}_t)_{t \geq 0}$ that the random sequence $(\cA_t)_{t \geq 0}$ can possibly be equal to with positive probability up to time $\tau'(o_1,\eps)$, i.e., $$\Prob{\cA_t={\rm A}_t\;\;\forall\;\;0\leq t\leq \tau'(o_1,\eps)}>0,$$ and such that the number of unpaired half-edges incident to ${\rm A}_{\tau'(o_1,\eps)}$ is at most $\sqrt{\eps n}$. By the law of total probability, it is enough to show that for any such sequence $({\rm A}_t)_{0 \leq t \leq \tau'(o_1,\eps)}$, we have 
\begin{align*}
    \lim_{\eps \to 0}\liminf_{n \to \infty}\CProb{\tau'(o_2,\eps)<\sqrt{\eps n},|D(o_2,\tau'(o_2,\eps))|>h(\eps)\sqrt{n}}{\cA_t={\rm A}_t\;\;\forall\;\;0\leq t\leq \tau'(o_1,\eps)}=1. \\\numberthis \label{eq:enough_unpaired_o2_fresh_toshow}
\end{align*}

As remarked before, conditionally on the event $\{\cA_t={\rm A}_t\;\;\forall\;\;0\leq t\leq \tau'(o_1,\eps)\}$, $\G_n'({c})$ is a union of configuration models on its own, and thus we can apply Proposition \ref{prop:enough_unpaired_o_1_fresh} directly to conclude \eqref{eq:enough_unpaired_o2_fresh_toshow}, if we can check that $\G_n'({c})$ satisfies the assumptions of it (some of which are the assumptions of Corollary \ref{cor:sample_Gn}, in fact), no matter what the sequence $({\rm A}_t)_{0 \leq t \leq \tau'(o_1,\eps)}$ is. 

Note that we need only check the assumptions pertaining to the parts $\cN^{(s_i)}$ for $0\leq i \leq l-1$. Assumption \ref{assump:part_i_size} for these parts is straightforward. For Assumptions \ref{assump:degree_regularity}, \ref{assump:offspring_degree_reg} and \ref{assump:one_good_deg_greater_2} we note that since in the first $\tau'(o_1,\eps)$ steps, we have exhausted at most $2\tau'(o_1,\eps)<2\sqrt{\eps n}=o(n)$ many half-edges, the conditions \eqref{assump:regularity_1_2} and \eqref{eq:assump_regularity_tensor} are satisfied identically, taking $i=s_p$ and $j=s_{p+1}$ in those conditions, for any $p=0,1,\dots,l-1$, with the \emph{same} constants appearing on the right-hand side of those conditions. We conclude by Proposition \ref{prop:enough_unpaired_o_1_fresh}, taking $\G_n(c)=\G_n'({c})$ and $c=\overleftarrow{c}$, and recalling $\sM(\overleftarrow{c})>1$, since $\sM(c)>1$.
    \end{proof}

\paragraph{Proof of connection of random vertices.} Let us now give the proof of Proposition \ref{prop:connection_under_truncation}. In the next section, we will develop the necessary machinery to prove Proposition \ref{prop:enough_unpaired_o_1_fresh} and provide its proof.
\begin{proof}[Proof of Proposition \ref{prop:connection_under_truncation}]
    Let us first write for any $\eps>0$,
    \begin{align*}
        \Prob{o_1\stackrel{\G_n}{\longleftrightarrow}o_2}\geq \CProb{o_1\stackrel{\G_n}{\longleftrightarrow}o_2}{\cF(o_1)\cap \cF(o_2)}\Prob{\cF(o_1)\cap \cF(o_2)}, 
    \end{align*}
    where recall the event $\cF(o_1)$ from \eqref{eq:event_Fo1}, and analogously we define the event 
    \begin{align*}
        \cF(o_2):=\{\tau'(o_2,\eps)<\sqrt{\eps n},|D_{\overleftarrow{c}}(o_2,\tau'(o_2,\eps))|>h(\eps)\sqrt{n}\}.\numberthis \label{eq:event_Fo2}
    \end{align*}
    Thanks to Proposition \ref{prop:enough_unpaired_o_1_fresh} and Corollary \ref{cor:enough_unpaired_o_2_fresh}, we have $\Prob{\cF(o_1)\cap \cF(o_2)}>0$ after letting $n\to \infty$ first followed by $\eps\to 0$.

    Thus it is enough to show that for any fixed $\eps>0$ no matter how small, we have
    \begin{align*}
        \liminf_{n \to \infty} \CProb{o_1\stackrel{\G_n}{\longleftrightarrow}o_2}{\cF(o_1)\cap \cF(o_2)}>0.
    \end{align*}
Consider the sets $(\cA_t,\cB_t)$ at time $t=\tau'(o_1,\eps)+\tau'(o_2,\eps)$. Under the conditioning event above, we have at least $g(\eps)\sqrt{n}$ many unpaired half-edges from $H_{s_0\rightarrow s_1}$ incident to some vertices in $\cA_t$ and at least $g(\eps)\sqrt{n}$ many half-edges from $H_{s_1\rightarrow s_0}$ incident to some vertices in $\cB_t$. It could have been that in some step $s$ of constructing the sets $\cB_{\tau'(o_1,\eps)},\cB_{\tau'(o_1,\eps)+1},\dots,\cB_{\tau'(o_1,\eps)+\tau'(o_2,\eps)}$, we pair a half-edge from $\cB_{\tau'(o_1,\eps)+s}=A'_{\overleftarrow{c}}(o_2,s)$ to one unpaired half-edge incident to $A_c(o_1,\tau'(o_1,\eps))=\cA_{\tau'(o_1,\eps)+s}$, in which case we have already discovered a path from $o_1$ to $o_2$ in the graph $\G_n$ (in fact, in the subgraph $\G_n(c)$). 

But in any case, if this is not true, consider continuing the process $(\cA_t,\cB_t)_{t \geq 0}$ beyond time $t=\tau'(o_1,\eps)+\tau'(o_2,\eps)$, where for the subsequent $ g(\eps)\sqrt{n}$ steps, we only pair the first $ g(\eps)\sqrt{n}$ many unpaired half-edges (in some arbitrary order) from $H_{s_1\rightarrow s_0}$ incident to $\cB_{\tau'(o_1,\eps)+\tau'(o_2,\eps)}=A'_{\overleftarrow{c}}(o_2,\tau'(o_2,\eps))$. The probability that none of these half-edges are paired to any of the unpaired half-edges from $H_{s_0\rightarrow s_1}$ incident to some vertices in $\cA_{\tau'(o_1,\eps)+\tau'(o_2,\eps)}=A_c(o_1,\tau'(o_1,\eps))$ is at most
\begin{align*}
    \left(1-\frac{g(\eps)\sqrt{n}}{|H_{s_0\rightarrow s_1}|}\right)^{g(\eps)\sqrt{n}}.
\end{align*}
This is because at every such subsequent step, with probability at least $g(\eps)\sqrt{n}/|H_{s_0\rightarrow s_1}|$ we sample a (not yet paired) half-edge from $D(o_1,\tau'(o_1,\eps))$ and the samplings at different steps are independent. Since the pair $(s_0,s_1)$ is good, we note that as $n \to \infty$, the last display tends to the constant 
\begin{align*}
    \exp{\left(-\frac{g(\eps)^2}{\Exp{D_{(s_0, s_1)}}} \right)}<1.
\end{align*}
It follows that 
\begin{align*}
    \liminf_{n \to \infty}\CProb{o_1\stackrel{\G_n}{\longleftrightarrow}o_2}{\cF(o_1)\cap \cF(o_2)}\geq 1-\exp{\left(-\frac{g(\eps)^2}{\Exp{D_{(s_0, s_1)}}} \right)}>0.
\end{align*}
\end{proof}
    
\paragraph{Proof of Proposition \ref{prop:enough_unpaired_o_1_fresh}.} It remains to prove Proposition \ref{prop:enough_unpaired_o_1_fresh}. To approach the proof, let us begin by defining a useful branching process, which will help us understand the exploration set $A_c(o_1,t)$. 

\begin{definition}\label{def:BP_cT_n}
  The $n$-dependent branching process $\cT_n$ is defined as follows.
  \begin{itemize}
      \item The root $\varphi$ of this process has type $s_0$, and has a random number of offspring of type $s_1$ given by the random variable $D_{n, (s_0, s_1)}$. We also say $\{\varphi\}$ is the zeroth generation $Z_0$ of $\cT_n$.
      \item  For any $r\geq 1$ an integer, let us write
      \begin{align*}
          r_l:=r\bmod l.
      \end{align*}
      Inductively, the set of all vertices at generation $k$ for $k \geq 1$ is denoted by $Z_k$, and has type $k_l$, and each of them has a random number of offspring $D_{n, s_{(k-1)_l},s_{k_l},s_{(k+1)_l}}$. Often we lighten notation, and simply write $s_k$ for $s_{k_l}$ (this should not create confusion; recall the subscripts of $s$ are always $\bmod l$).
  \end{itemize}
\end{definition}
We will couple the cluster $A_c(o_1,t)$ constructed by doing the restricted breadth-first exploration from $o_1\in \G_n(c)$ with the tree $\cT_n$ naturally, described step-by-step in the following points.
\begin{itemize}
    \item Recall $o_1$ is uniformly distributed in $\cN^{(s_0)}$. Thus, the number of neighbors it has in $\cN^{(s_1)}$ is precisely distributed as $D_{n,(s_0, s_1)}$. Thus, we can identify $o_1$ with $\varphi$, and let the number of offspring of $\varphi$ in $\cT_n$ be precisely the number of neighbors of $o_1$ in $\cN^{(s_1)}$. 
    \item Recall that to construct $A_c(o_1,1)$, we take a half-edge $h\in H_{s_0 \rightarrow s_1}$ incident to $o_1$. We then randomly sample a half-edge $h' \in H_{s_1 \rightarrow s_0}$ and pair it to $h$, and if $h'$ is incident to $v \in \cN^{s_1}$, we let $A_c(o_1,1)$ the edge $\{o_1,v\}$. Observe that since $v$ has exactly in distribution $D_{n,(s_0,s_1,s_2)}$ many offspring in $\cN^{(s_2)}$, we can identify $v$ with a vertex $\varphi(v)$ in $\cT_n$ discovered in a first step $\cT_n(1)$ of a breadth-first exploration of $\cT_n$ starting from $\varphi$, with the property that the number of unpaired half-edges incident to $v$ in $A_c(o_1,1)$ is precisely the number of offspring of $\varphi(v)$ in $\cT_n$ that we have \emph{discovered, but not yet explored.}
    
    \item In a general step, recall that to construct $A_c(o_1,t+1)$ from $A_c(o_1,t)$, we take a half-edge $h$ incident to some vertex $v \in A_c(o_1,t)$ (at minimal distance from $o_1$ with at least one unpaired half-edge incident to it) and uniformly sample a compatible half-edge $h'$ (among the ones that have not already been paired so far) and pair to $h$. Note that we can instead simply sample $h'$ uniformly from the set of \emph{all} half-edges compatible to $h$ and pair $h$ and $h'$, provided $h'$ does not equal any of the half-edges sampled so far. In general, since we aim to couple $A_c(o_1,t)$ with the breadth-first exploration $\cT_n(t)$ of the tree $\cT_n$, this method of sampling uniformly from the set of \emph{all} half-edges compatible to $h$ can break down, if any of the following two occurs:
    \begin{itemize}
        \item The half-edge $h'$ has already been sampled in a previous step, in which case we are trying to re-use a half-edge already paired, which is non-physical.
        \item The half-edge $h'$ is incident to a vertex $v \in A_c(o_1,t)$. In this case, we actually discover a cycle in $A_c(o_1,t)$ and thus we cannot hope for $A_c(o_1,t)$ to be coupled to $\cT_n(t)$.
    \end{itemize}
If any of the above two items occur, we say there is a \emph{restricted coupling conflict} at step $t$.

    \item Inductively, for $t\geq 0$, assume that $A_c(o_1,t)$ is perfectly coupled with $\cT_n(t)$ without seeing any restricted coupling conflict, as described above, with each vertex $v \in A_c(o_1,t)$ having a counterpart $\varphi(v)\in \cT_n(t)$, with the property that the number of unpaired half-edges incident to $v$ is the same as the number of discovered but yet unexplored offspring of $\varphi(v)$ in $\cT_n$. Then we can do this coupling procedure for one further step, i.e., construct $A_c(t+1)$ similarly, provided there is no {restricted coupling conflict} at step $t+1$.
\end{itemize}

Thus, defining the stopping time 
\begin{align*}
    \tau:=\inf\{t \geq 0:\textrm{ there is a restricted coupling conflict at step } t+1 \}, \numberthis \label{eq:stop_time_rest_coupconf}
\end{align*}
  we can couple $A_c(o_1,t)$ and $\cT_n(t)$ for any $t<\tau$ perfectly as described above. Let us gather all these observations in a lemma whose proof essentially follows from the discussion we just had and so we omit it.

\begin{lemma}[Perfect coupling of restricted exploration process]\label{lem:rest_coupling}
      The restricted exploration process $A_c(o_1,j)$ can be perfectly coupled with a breadth-first exploration process $\cT_n(j)$ for all $0\leq j \leq t$ where $t<\tau$, such that every $v \in A_c(o_1,j)$ has a counterpart $\varphi(v)$ in $\cT_n(j)$, with the property that the number of neighbors of $v$ in $A_c(o_1,j)$ equals the number of explored offspring of $\varphi(v)$ in $\cT_n$, while the number of unpaired half-edges incident to $v$ equals the number of discovered but yet unexplored offspring of $\varphi(v)$ in $\cT_n$. 
  \end{lemma}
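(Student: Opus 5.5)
The plan is induction on $j$, building the coupling one exploration step at a time and checking that the invariant of Lemma \ref{lem:rest_coupling} survives each step. Throughout I use the principle of deferred decisions for configuration models. Conditionally on the pairings made so far, each new pairing of a half-edge $h\in H_{s_{r'}\rightarrow s_{r'+1}}$ is a uniform choice among the still-unpaired half-edges of $H_{s_{r'+1}\rightarrow s_{r'}}$. This is the same "pairing as we explore" fact already used in the proof of Theorem \ref{thm:loc_lim}.

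For the base case $j=0$, identify $o_1$ with $\varphi$. Since $o_1$ is uniform in $\cN^{(s_0)}$, its number of half-edges in $H_{s_0\rightarrow s_1}$ has exactly the law $D_{n,(s_0,s_1)}$ from \eqref{eq:the_RV_D_nij}. So these half-edges can be declared to be the discovered but unexplored children of $\varphi$.

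For the inductive step, suppose the coupling holds up to step $j<\tau$. Pick the next half-edge $h$ by the same breadth-first rule in both processes. Let it be incident to $v$ at distance $r$ from $o_1$, with $r'$ as in \eqref{eq:r_division}.
\begin{itemize}
\item In $\cT_n$, "exploring" the child of $\varphi(v)$ that corresponds to $h$ means sampling $h'$ uniformly from all of $H_{s_{r'+1}\rightarrow s_{r'}}$.
\item In the graph, we use this same $h'$.
\end{itemize}
Because $j+1\le\tau$, there is no restricted coupling conflict at this step. Hence $h'$ is unpaired, so the graph sampling is uniform among unpaired compatible half-edges, exactly as required. Also, the vertex $w$ carrying $h'$ is new, i.e.\ $w\notin A_c(o_1,j)$.

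Next I check that the new vertex has the right offspring law. The vertex $w$ is chosen with probability proportional to its degree $d^{(s_{r'+1},s_{r'})}_w$. By \eqref{eq:law_Dn_ijr}, its number of half-edges into $\cN^{(s_{r'+2})}$ therefore has law $D_{n,(s_{r'},s_{r'+1},s_{r'+2})}$. When $s_{r'+2}=s_{r'}$, the count excludes $h'$ itself, which gives the shifted variable. This is exactly the offspring law in Definition \ref{def:BP_cT_n}.

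The independence needed for the branching structure comes from the absence of conflicts. Distinct explored vertices are distinct graph vertices, and since $h'$ is drawn afresh from the full pool, the draw is independent of everything before it. So the offspring counts are i.i.d.\ given their types, as the branching process demands.

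Define $\varphi(w)$ as the new child, and take its unpaired half-edges in the guided direction as its discovered but unexplored children. After the step, $v$ has lost exactly one unpaired half-edge and gained one neighbour, and $\varphi(v)$ has one more explored child. The bookkeeping identity therefore persists, which closes the induction.

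The main subtlety, and so the step I expect to be hardest, is the type consistency. I need that $w$ at distance $r+1$ has type $s_{(r+1)'}$, and that $H_c(w,\cdot)$ consists of half-edges toward $s_{(r+1)'+1}$, matching the generation-$(r+1)$ type in $\cT_n$. This rests on the tree property before $\tau$: distances in $A_c(o_1,j)$ equal generations in $\cT_n(j)$. It also requires matching the mod-$(l-1)$ convention in \eqref{eq:r_division} with the mod-$l$ indexing of $\cT_n$. I would settle this by a direct check that both track position along the loop $c$. If they disagree, I would adopt the single convention $r\bmod l$ in both definitions.
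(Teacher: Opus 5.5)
Your proposal is correct and matches the argument the paper relies on; the paper omits the proof as following from the preceding discussion. That argument draws $h'$ uniformly from the full compatible pool, treats a re-used half-edge or a hit on an already discovered vertex as the two conflict types, and propagates the invariant step by step, including the shifted count when $s_{r'+2}=s_{r'}$. Your suspicion about the indexing is justified: \eqref{eq:r_division} must be read with $l$ in place of $l-1$, i.e.\ $r'=r\bmod l$. Otherwise a vertex at distance $l-1$ (of type $s_{l-1}$) would be guided into $H_{s_{l-1}\rightarrow s_1}$ rather than $H_{s_{l-1}\rightarrow s_0}$, and for $l=1$ the division is undefined. That contradicts the period-$l$ types of $\cT_n$ and the multiple-of-$l$ generations behind $\tau'(o_1,\eps)$ and $\tau''(\eps)$.
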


What we really want to say is that $\tau$ is sufficiently large so that this coupling can be done up to $\eps\sqrt{n}$ steps. The next Proposition proves this.

\begin{proposition}[Restricted exploration coupling up to good number of steps]\label{prop:coupling_valid_epssqrtn}
   Assume all the conditions of Proposition \ref{prop:connection_under_truncation}. Consider the restricted exploration process $A_c(o_1,t)$ and its coupling with the breadth-first exploration process $\cT_n(t)$ in $\cT_n$ as in Lemma \ref{lem:rest_coupling}, and the stopping time $\tau$ as in \eqref{eq:stop_time_rest_coupconf}. Then 
    \begin{align*}
        \liminf_{n \to \infty}\Prob{\tau\leq \eps \sqrt{n}}=O(\eps^2),
    \end{align*}
    where the $O(\cdot)$ term on the right-hand side above is as $\eps\to 0$.
\end{proposition}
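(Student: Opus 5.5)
We bound $\Prob{\tau\leq \eps\sqrt n}$ by the expected number of restricted coupling conflicts in the first $\lfloor \eps\sqrt n\rfloor$ steps, by a union bound:
\begin{align*}
\Prob{\tau\leq \eps\sqrt{n}}\leq \sum_{t=1}^{\lfloor \eps\sqrt{n}\rfloor}\Prob{\textrm{conflict at step } t,\ \tau\geq t-1}.
\end{align*}
This mirrors the conflict count used for the divergence of $\tau_n$ in the proof of Theorem \ref{thm:loc_lim}, but we now track the $n$-dependence precisely rather than only showing $o(1)$.

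At step $t$ the half-edge $h$ to be paired lies in some $H_{s_i\rightarrow s_{i+1}}$ with $(s_i,s_{i+1})$ GOOD by Lemma \ref{lem:good_pairs_c}. Thus $h'$ is drawn uniformly from $H_{s_{i+1}\rightarrow s_i}$, which has size $\Theta(n)$ by Assumption \ref{assump:part_i_size} and \eqref{assump:regularity_1_2}. A conflict requires one of two things:
\begin{itemize}
\item[(a)] $h'$ is one of the at most $2(t-1)$ half-edges already paired;
\item[(b)] $h'$ is incident to one of the at most $t$ vertices already in $A_c(o_1,t-1)$.
\end{itemize}
Under the bounded-degree assumption \eqref{assump:unif_bdd_degs}, each such vertex carries at most $\bB$ half-edges. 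Hence, conditionally on the history, the conflict probability at step $t$ is at most $(2+\bB)t/(c_0 n)$, where
\[
c_0:=\tfrac12\min_{i}\Exp{D_{(s_{i+1},s_i)}}\min_j\gamma^{(j)}>0
\]
for all large $n$. The key point is that this bound is uniform over histories, because the step-$t$ sampling is uniform over all compatible half-edges in the coupled process, independent of the past.

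Summing over $t\leq \eps\sqrt n$ gives
\[
\Prob{\tau\leq\eps\sqrt n}\leq \frac{(2+\bB)}{c_0 n}\sum_{t\leq \eps\sqrt n}t\leq \frac{(2+\bB)\eps^2}{2c_0}+o(1),
\]
so $\limsup_n\Prob{\tau\leq\eps\sqrt n}=O(\eps^2)$, which implies the stated $\liminf$ bound.

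The main obstacle is justifying the conditional uniform bound. The exploration may stop early, when no compatible half-edges remain (Remark \ref{rem:expl_any_loop_c_stop_expl}), and the set of seen vertices must be controlled even though vertices in the tree may have many half-edges of other types. I would handle both by noting that every step adds at most one vertex and uses exactly two half-edges, so the counts of paired half-edges ($\leq 2t$) and seen vertices ($\leq t+1$) hold deterministically. The bounded degree $\bB$ (or, without it, $\Delta_n$, which would yield an $O(\eps^2)\Delta_n$-type bound, explaining why \eqref{assump:unif_bdd_degs} was imposed) then closes the estimate.
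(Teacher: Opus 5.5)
Your proposal is correct and is essentially the paper's argument: a union bound over the first $\eps\sqrt{n}$ steps, with the step-$t$ conflict probability bounded by (previously used half-edges plus $\bB$ times the number of discovered vertices) divided by $\min_i |H_{s_{i+1}\rightarrow s_i}|=\Theta(n)$, which sums to $O(\eps^2)$. Your count of $2(t-1)$ used half-edges is slightly more careful than the paper's $t-1$: it covers the case $s_i=s_{i+1}$, where both half-edges of a pairing come from the same pool. Like the paper's bound, your estimate in fact controls the $\limsup$, which is what the later argument needs.
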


\begin{proof}
    Let us denote by $x_n:=\min_{0\leq i\leq l-1}|H_{s_i \rightarrow s_{i+1}}|$, and note that $x_n=\Theta(n)$ since the pairs $(s_i,s_{i+1})$ are GOOD. Note that 
    \begin{align*}
        &\Prob{\tau \leq \eps \sqrt{n}}\\&=\Prob{\textrm{one of the steps $1,2,\dots,\eps\sqrt{n}$ witnesses a restricted coupling conflict}}\\&\leq \sum_{i=1}^{\eps \sqrt{n}}\Prob{\textrm{step $i$ witnesses a restricted coupling conflict}}.
    \end{align*}
    Bounding the individual summands in the above sum is quite similar to \eqref{eq:expected_conflicts}. Concretely, the probability at step $i$ we sample a half edge $h'$ already sampled in a previous step is at most $$\frac{i-1}{x_n},$$ and at step $i$ we sample a half-edge $h'$ incident to a vertex $v\in A_c(o_1,i-1)$ (thus creating a cycle) is at most $$\frac{(i-1)\max_{0\leq j \leq l-1}\Delta_n(s_j,s_{j+1})}{x_n},$$ where recall $\Delta_n(p,q)$ is the maximum degree a vertex in $\cN^{(p)}$ can have in $\cN^{(q)}$, for any $p,q\in [k].$ Thus, recalling $x_n=\Theta(n)$ (so that $x_n-\eps \sqrt{n}>x_n/2$ for all large $n$),
    \begin{align*}
        \sum_{i=1}^{\eps \sqrt{n}}\Prob{\textrm{step $i$ witnesses a restricted coupling conflict}}\leq \frac{\eps^2 n}{2x_n}(1+\max_{0\leq j \leq l-1}\Delta_n(s_j,s_{j+1})).\\\numberthis \label{eq:expected_restricted_conf}
    \end{align*}By assumption \eqref{assump:unif_bdd_degs}, the right-hand side above is at most $\frac{\eps^2n(1+\bB)}{2x_n}=O(\eps^2)$ since $x_n=\Theta(n)$.
\end{proof}

As a consequence of the last proposition, we note that the coupling between $A_c(o_1,t)$ and $\cT_n(t)$ can be done up to $\eps \sqrt{n}$ steps with probability at least $1-o_{\eps,n}(1)$, having all the properties as in Lemma \ref{lem:rest_coupling}, where $o_{\eps,n}(1)$ denotes an error term that goes to zero when we first let $n \to \infty$ followed by $\eps\to 0$. With the help of this result, as we shall see, understanding the number $|D_c(o_1,\tau'(o_1,\eps))|$ of half-edges incident to vertices in $A_c(o_1,\tau'(o_1,\eps))$ for small $\eps>0$ boils down to understanding particular generation sizes in the tree $\cT_n$.  

Note that if the coupling of Lemma \ref{lem:rest_coupling} is valid up to the stopping time $\tau'(o_1,\eps)$, then by the properties of this coupling, $\tau'$ agrees with a particular stopping time $\tau''(\eps)$, defined in terms of the breadth-first exploration process $\cT_n(t)$ of the tree $\cT_n$. Namely, $\tau''(\eps)$ is precisely the first time \emph{after} $\eps \sqrt{n}$ steps of the breadth-first exploration, that we finish completely exploring a generation $Z_m$ in $\cT_n$, where $m$ has the form $m=pl$ for an integer $p\geq 1$ (i.e., $m$ is a multiple of $l$, $m=0\bmod l$), and in particular, all the discovered but yet unexplored vertices are those that have some vertex of $Z_m$ as their parent, thus themselves have type $s_1$. 

Concretely, let $\cT_n(D,t)$ and $\cT_n(E,t)$ respectively denote the set of discovered but not yet explored, and the set of all explored vertices in $\cT_n(t)$. Then
\begin{align*}
    \tau''(\eps):=\inf\{t>\eps \sqrt{n}:\textrm{ all vertices in }\cT_n(D,t)\textrm{ have type }s_1\}. \numberthis \label{eq:def_tau''}
\end{align*}

Our strategy to show that the coupling is valid up to $\tau'(o_1,\eps)$ is a roundabout way. Namely, we will first use properties of the branching process $\cT_n$ to check that the stopping time $\tau''(\eps)$ is only larger than $\eps \sqrt{n}$ by at most a universal constant factor $C$, and thus, since the coupling is valid up to time $C\eps \sqrt{n}$ (with probability $1-o_{\eps,n}(1)$ thanks to Proposition \ref{prop:coupling_valid_epssqrtn}), it must be valid up to the stopping time $\tau'(o_1,\eps)$.

To make this idea work, it is useful at this point to define another branching process $\cY_n$.
\begin{definition}[The process $\cY_n$]\label{def:Yn}
    The process $\cY_n$ is a subprocess of $\cT_n$, and satisfies the following. 
\begin{itemize}
    \item The root of $\cY_n$ is $\varphi$, the root of $\cT_n$. The set of children of $\varphi$ in $\cY_n$ is the set of all vertices in generation $l$, i.e., $Z_{l}$ of $\cT_n$. Thus, letting $Z'_1$ denote the first generation of $\cY_n$, we have $Z'_1=Z_l$. 
    \item In general, inductively, for any $p$, the $p$-th generation $Z'_p$ of $\cY_n$ is the set $Z_{pl}$, with the parent in $\cY_n$ of each vertex $v \in Z'_p$, is the ancestor $u$ of $v$ in $Z_{(p-1)l}$ in $\cT_n$. Note that $u \in Z'_{p-1}$.
\end{itemize}
\end{definition}

Observe that given the size $|Z_m|$ of the $m$-th generation in $\cT_n$, the distribution of the size $|Z_{m+1}|$ of the next generation is
\begin{align*}
    |Z_{m+1}|=\sum_{i=1}^{|Z_m|}D^{(i)}_{n,(s_{m-1},s_m,s_{m+1})}, \numberthis \label{eq:generation_size_cTn}
\end{align*}
where $\{D^{(i)}_{n,(s_{m-1},s_m,s_{m+1})}:i\geq 1\}$ constitutes an i.i.d.\ collection of variables with the same law as $D_{n,(s_{m-1},s_m,s_{m+1})}$ (always keeping in mind the subscripts of $s$ are $\bmod l$). As a consequence of this observation we note that:
\begin{itemize}
    \item The number of children of the root $\varphi$ in $\cY_n$ is distributed as the variable $D_{n,l}$, where we define $D_{n,l}$ inductively as follows. $D_{n,1}$ has the same distribution as $D_{n,(s_0, s_1)}$, and inductively, for any $2\leq j \leq l$, having defined $D_{n,j-1}$, 
    \begin{align*}
        D_{n,j}\stackrel{d}{=}\sum_{i=1}^{D_{n,j-1}} D^{(i)}_{n,(s_{j-2},s_{j-1},s_j)},
    \end{align*}
    the summands on the RHS above being independent of the upper limit of the sum.
    \item For any vertex $v\neq \varphi$ in $\cY_n$, the number of offspring of $v$ in $D$ has the distribution $d_{n,l}$, where $d_{n,l}$ is defined inductively as follows. $d_{n,1}\stackrel{d}{=}D_{n,(s_{l-1},s_{0},s_1)}$, and inductively for $2\leq j \leq l$, 
    \begin{align*}
        d_{n,j}\stackrel{d}{=}\sum_{i=1}^{d_{n,j-1}}D^{(i)}_{n,(s_{j-2},s_{j-1},s_j)},\numberthis \label{eq:offspring_law_Yn}
    \end{align*}
    the summands on the RHS above being independent of the upper limit of the sum.
\end{itemize}
It is useful at this point to define a natural limiting version $\cY_\infty$ of $\cY_n$, as follows. \begin{definition}[The process $\cY_\infty$]\label{def:Y-infty}
    \begin{itemize}
    \item The number of children of the root $\varphi$ in $\cY_\infty$ is distributed as the variable $D_{l}$, where we define $D_{l}$ inductively as follows. $D_{1}$ has the same distribution as $D_{(s_0,s_1)}$, and inductively, for any $2\leq j \leq l$, having defined $D_{j-1}$, 
    \begin{align*}
        D_{j}\stackrel{d}{=}\sum_{i=1}^{D_{j-1}} D^{(i)}_{(s_{j-2},s_{j-1},s_j)},
    \end{align*}
    the summands on the RHS above being independent of the upper limit of the sum.
    \item For any vertex $v\neq \varphi$ in $\cY_\infty$, the number of offspring of $v$ in $D$ has the distribution $d_{l}$, where $d_{l}$ is defined inductively as follows. $d_{1}\stackrel{d}{=}D_{(s_{l-1},s_{0},s_1)}$, and inductively for $2\leq j \leq l$, 
    \begin{align*}
        d_{j}\stackrel{d}{=}\sum_{i=1}^{d_{j-1}}D^{(i)}_{(s_{j-2},s_{j-1},s_j)},\numberthis \label{eq:offspring_law_Yn}
    \end{align*}
    the summands on the RHS above being independent of the upper limit of the sum.
    \end{itemize}
\end{definition}

The key result on $\cY_n$ and $\cY_\infty$ is the next lemma. \begin{lemma}\label{lem:on_Yn_Y-infty}
\begin{itemize}
    \item [(a.)] For all $n$ sufficiently large but fixed, the process $\cY_n$ survives forever with positive probability. 
    \item [(b.)] The process $\cY_\infty$ also survives forever with positive probability.
    \item [(c.)] Moreover, denoting by $\chi_n$ the survival probability of $\cY_n$, we have
    \begin{align*}
        \lim_{n \to \infty} \chi_n=\chi,    \end{align*} where $\chi$ is the survival probability of $\cY_\infty$.
\end{itemize}
     
\end{lemma}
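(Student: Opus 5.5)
We treat $\cY_n$ and $\cY_\infty$ as (delayed) Bienaymé--Galton--Watson processes: the root has offspring law $D_{n,l}$ (resp.\ $D_l$), and every other vertex has offspring law $d_{n,l}$ (resp.\ $d_l$). By Wald's identity applied repeatedly to \eqref{eq:offspring_law_Yn},
\begin{align*}
\Exp{d_l}=\prod_{j=0}^{l-1}\Exp{D_{(s_{j-1},s_j,s_{j+1})}}=\sM(c)>1,\qquad \Exp{d_{n,l}}=\prod_{j=0}^{l-1}\Exp{D_{n,(s_{j-1},s_j,s_{j+1})}}\to\sM(c),
\end{align*}
where the convergence uses Assumption \ref{assump:offspring_degree_reg}. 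Likewise $\Exp{D_l}=\Exp{D_{(s_0,s_1)}}\prod_{j=1}^{l-1}\Exp{D_{(s_{j-1},s_j,s_{j+1})}}>0$, since $(s_0,s_1)$ is GOOD by Lemma \ref{lem:good_pairs_c} and $\sM(c)>1$ forces each factor to be positive. Hence for (b) the non-root part is a supercritical BGW process, with mean offspring $\sM(c)>1$. It therefore survives with positive probability $\chi^*$ from each individual, and
\begin{align*}
\chi=1-\Exp{(1-\chi^*)^{D_l}}>0
\end{align*}
because $\Prob{D_l>0}>0$. Part (a) follows in the same way: for $n$ large, $\Exp{d_{n,l}}>1$ and $\Prob{D_{n,l}>0}>0$, since $\Exp{D_{n,l}}\to\Exp{D_l}>0$. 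Finiteness of the means is not an issue, because under \eqref{assump:unif_bdd_degs} all offspring variables are bounded by $\bB$ and in particular have finite means.

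For (c), we first show that $d_{n,l}\dlim d_l$ and $D_{n,l}\dlim D_l$. We argue by induction on $j$ using the compound-sum representations. Since $D_{n,(s_{j-2},s_{j-1},s_j)}\dlim D_{(s_{j-2},s_{j-1},s_j)}$ by Assumption \ref{assump:offspring_degree_reg}, and $D_{n,(s_0,s_1)}\dlim D_{(s_0,s_1)}$ by Assumption \ref{assump:degree_regularity}, the generating functions converge pointwise on $[0,1]$. A random sum $\sum_{i=1}^{N}X_i$ has generating function $G_N(G_X(z))$, and pointwise convergence of generating functions on $[0,1]$ is uniform, since they are monotone, continuous and bounded by $1$. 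Composition therefore preserves convergence, which gives $G_{d_{n,l}}\to G_{d_l}$ and $G_{D_{n,l}}\to G_{D_l}$ uniformly on $[0,1]$.

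It remains to pass to extinction probabilities. Let $q_n$ and $q$ be the smallest fixed points in $[0,1]$ of $f_n:=G_{d_{n,l}}$ and $f:=G_{d_l}$. Since $f'(1)=\sM(c)>1$, $f$ is strictly convex near $1$ (it is not affine, as $\Exp{d_l}>1$ forces $\Prob{d_l\ge2}>0$), and $q<1$ is the unique fixed point in $[0,1)$. Moreover $f(z)-z$ changes sign at $q$, with $f(z)>z$ on $[0,q)$ and $f(z)<z$ on $(q,1)$. We show $q_n\to q$ as follows.
\begin{itemize}
\item Fix $\delta>0$. Then $f(q+\delta)<q+\delta$, so by uniform convergence $f_n(q+\delta)<q+\delta$ for large $n$, which forces $q_n\le q+\delta$.
\item If $q>0$, then $f(z)-z\ge\kappa>0$ on $[0,q-\delta]$, so $f_n(z)>z$ there for large $n$, giving $q_n\ge q-\delta$. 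If $q=0$, the lower bound is trivial.
\end{itemize}
Then $1-\chi_n=G_{D_{n,l}}(q_n)\to G_{D_l}(q)=1-\chi$, using uniform convergence of $G_{D_{n,l}}$ together with continuity of $G_{D_l}$.

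The main obstacle is this fixed-point continuity step, in particular ruling out fixed points of $f_n$ that escape toward $1$ or accumulate away from $q$. The sign-change argument above handles this, provided $f$ is strictly supercritical, which holds since $\sM(c)>1$.
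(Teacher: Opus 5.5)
Your proof is correct and follows the same route as the paper. For (a) and (b), a supercritical BGW process with mean $\sM_n(c)\to\sM(c)>1$ (respectively $\sM(c)$) hangs below each first-generation vertex, and the first generation has asymptotically positive mean because $(s_0,s_1)$ is GOOD; (c) rests on convergence in distribution of the root and non-root offspring laws.

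The paper disposes of (c) in one line. Your generating-function argument correctly supplies the step it leaves implicit: uniform convergence via monotonicity, stability under composition, and continuity of the smallest fixed point using $\sM(c)>1$. The only slip is cosmetic: the compound variables $d_{n,l}$ and $D_{n,l}$ are bounded by $\bB^l$, not $\bB$.
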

\begin{proof}
For part (a.), note that by the definition of $\cY_n$, for any first-generation vertex $v \in Z'_1$, the subtree of it in $\cY_n$ is a BGW tree, with mean number of offspring
    \begin{align*}
        \sM_n(c):=\Exp{D_{n,(s_{l-1},s_0,s_1)}}\Exp{D_{n,(s_{l-1},s_0,s_1)}}\cdots \Exp{D_{n,(s_{l-2},s_{l-1},s_0)}}.
    \end{align*}
    Since $\sM(c)>1$, it follows that $\sM_n(c)>1$ for all large $n$, which follows from the fact that $\sM_n(c)\to \sM(c)$ as $n \to \infty$, thanks to \eqref{assump:regularity_1_2}. Finally, the first generation itself has expected size
    \begin{align*}
        \Exp{D_{n,s_0\rightarrow s_1}}\Exp{D_{n,(s_0,s_1,s_2)}}\cdots \Exp{D_{n,(s_{l-2},s_{l-1},s_0)}},
    \end{align*}
    converging as $n \to \infty$ to $\Exp{D_{s_0 \rightarrow s_1}}\Exp{D_{(s_0,s_1,s_2)}}\cdots \Exp{D_{(s_{l-2},s_{l-1},s_0)}}>0$, where the last inequality is true since $(s_0,s_1)$ is a $\rm GOOD$ pair, and since $$0<\Exp{D_{(s_0,s_1,s_2)}}\cdots \Exp{D_{(s_{l-2},s_{l-1},s_0)}}=\frac{\sM(c)}{\Exp{D_{(s_{l-1},s_0,s_1)}}}.$$  

    (b.) follows in exactly the same way as (a.), noting that the mean offspring number of the BGW subtree corresponding to any first generation vertex of $\cY_\infty$ is $\sM(c)>1$.

    (c.) follows from the fact that using \eqref{assump:regularity_1_2} and \eqref{eq:assump_regularity_tensor}, the offspring law of the root vertex of $\cY_n$ converges in distribution to the root offspring law of $\cY_\infty$, and the offspring law of any non-root vertex of $\cY_n$ converges in distribution to the offspring law of any non-root vertex of $\cY_\infty$.
\end{proof}

\begin{remark}\label{rem:Zn_survives}
 The process $\cY_n$ being a subprocess of the process $\cT_n$, the latter also survives forever with positive probability for all large $n$. Particularly, $\liminf_{n \to \infty}\chi'_n>0$ where we let $\chi'_n$ be the survival probability of $\cT_n$. Similarly, although not necessary for our purposes, let us point out that the natural limiting version $\cT_{(\infty)}$ of $\cT_n$ also survives with positive probability, since it contains $\cY_\infty$ as a subprocess.     
\end{remark}

Recall the stopping time $\tau''(\eps)$ from \ref{eq:def_tau''}, and recall that our goal is to show that the stopping time $\tau''(\eps)$ is at most a universal constant multiple of $\eps \sqrt{n}$. First note that in the breadth-first exploration process $\cT_n(t)$ of $\cT_n$, at every step, we explore exactly one vertex, and discover its offspring, to be explored later. Thus, letting $$\Xi_t=|Z_0|+|Z_1|+\dots+|Z_{tl}|$$ be the sum of the sizes of the first $tl$ generations of $\cT_n$, by the definition of $\tau''(\eps)$, we note that $\tau''(\eps)=\Xi_{t(\eps)+1}$, where $t(\eps)$ satisfies
\begin{align*}
    \Xi_{t(\eps)}\leq \eps \sqrt{n} < \Xi_{t(\eps)+1}.\numberthis \label{eq:ineq_t(eps)_epssqrtn}
\end{align*}

Our aim is to get good control over the sequence $(\Xi_t)_{t \geq 1}$ to show that $\Xi_{t+1}$ is at most a constant factor larger than $\Xi_t$, no matter what $t$ is, which, combined with the previous display, shows that $\tau''(\eps)$ is at most a constant factor of $\eps \sqrt{n}$.

We have to state a technical real-analytic result first. Consider positive reals $\alpha,\beta,m,m',\gamma>0$ with $m,m'>1$ and $\gamma\in (1/2,1)$. 

Set $\underline{a}_0=\Bar{a}_0=\alpha$ and $\underline{b}_0=\Bar{b}_0=\beta$, and given $\underline{a}_i,\Bar{a}_i,\underline{b}_i,\Bar{b}_i$ for $i \geq 0$, inductively define
\begin{align*}
    &\underline{a}_{i+1}:=m\underline{a}_i-(\Bar{a}_i)^{\gamma}, \Bar{a}_{i+1}:=m\Bar{a}_i+(\Bar{a}_i)^\gamma,\numberthis \label{eq:inductive_def_seq_a}\\& \underline{b}_{i+1}:=\underline{b}_i+m'\underline{a}_i-(\Bar{b}_i)^\gamma,\Bar{b}_{i+1}:=\Bar{b}_i+m'\Bar{a}_i+(\Bar{b}_i)^\gamma. \numberthis \label{eq:inductive_def_seq_b}
\end{align*}

\begin{lemma}[Sequence growth]\label{lem:seq_grow}
    Consider the sequences $(\underline{a}_i)_{i \geq 0}, (\Bar{a}_i)_{i \geq 0},$ as defined in \eqref{eq:inductive_def_seq_a} and $ (\underline{b}_i)_{i \geq 0}, (\Bar{b}_i)_{i \geq 0}$ as defined in \eqref{eq:inductive_def_seq_b}. Then for all $i \geq 1$, there are universal constants $A,B>1$ such that
    \begin{align*}
        &\frac{\alpha m^i}{A}\leq \underline{a}_i<\Bar{a}_i\leq \alpha Am^i,\textrm{ and  }\beta+\frac{m'\alpha m^i}{B}\leq \underline{b}_i<\Bar{b}_i\leq\beta+m'\alpha Bm^i.\numberthis \label{eq:seq_a,b_behave}
    \end{align*}
\end{lemma}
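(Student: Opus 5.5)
The plan is a bootstrap (induction) argument. It treats the recursions \eqref{eq:inductive_def_seq_a}--\eqref{eq:inductive_def_seq_b} as multiplicative perturbations of the linear maps $a\mapsto ma$ and $b\mapsto b+m'a$. The relative perturbations should decay geometrically because $\gamma<1$. I read ``universal'' as meaning that $A,B$ depend only on $m,m',\gamma$, under a largeness condition on the initial values. Some such condition is unavoidable: if $\alpha^{\gamma}\geq m\alpha$, then already $\underline a_1\leq 0$. In the application, $\alpha$ and $\beta$ are of order $\eps\sqrt n$, so I will assume $\alpha,\beta\geq x_0(m,m',\gamma)$ and, if needed, $\beta^\gamma\leq \alpha$. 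I will state this explicitly in the proof.

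\textbf{Step 1: the $a$-sequences.} I will prove by induction that $\alpha m^i/A\leq \underline a_i\leq \Bar a_i\leq \alpha A m^i$. For the upper bound, write $\Bar a_{i+1}=m\Bar a_i\,(1+\Bar a_i^{\gamma-1}/m)$. Since $\Bar a_i\geq \alpha m^i$ trivially, we get
\begin{align*}
\Bar a_i\leq \alpha m^i\prod_{j\geq 0}\bigl(1+m^{-1}\alpha^{\gamma-1}m^{-(1-\gamma)j}\bigr)\leq \alpha m^i\exp\Bigl(\frac{\alpha^{\gamma-1}}{m(1-m^{-(1-\gamma)})}\Bigr),
\end{align*}
which is at most $2\alpha m^i$ once $\alpha$ is large. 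For the lower bound, write $\underline a_{i+1}=m\underline a_i(1-x_i)$ with $x_i=\Bar a_i^{\gamma}/(m\underline a_i)$. Under the induction hypothesis,
\begin{align*}
x_i\leq m^{-1}A^{1+\gamma}\alpha^{\gamma-1}m^{-(1-\gamma)i}.
\end{align*}
This bound is summable and geometric, and it is small when $\alpha$ is large. Hence $\prod_j(1-x_j)\geq 1-\sum_j x_j\geq 1/A$, which closes the induction with, e.g., $A=2$. The inequality $\underline a_i<\Bar a_i$ is immediate by induction, since the gap increases by at least $2\Bar a_i^\gamma>0$ at each step.

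\textbf{Step 2: the $b$-sequences.} Telescoping gives
\begin{align*}
\Bar b_i=\beta+m'\sum_{j<i}\Bar a_j+\sum_{j<i}\Bar b_j^\gamma,\qquad \underline b_i=\beta+m'\sum_{j<i}\underline a_j-\sum_{j<i}\Bar b_j^\gamma .
\end{align*}
By Step 1, $\sum_{j<i}\Bar a_j\leq A\alpha m^i/(m-1)$ and $\sum_{j<i}\underline a_j\geq \alpha m^{i-1}/A$. For the error term, I use an a priori bound $\Bar b_j\leq \beta+C\alpha m^j$, again proved by induction. Together with $(x+y)^\gamma\leq x^\gamma+y^\gamma$, this yields
\begin{align*}
\sum_{j<i}\Bar b_j^\gamma\leq i\beta^\gamma+C^\gamma\alpha^\gamma \frac{m^{\gamma i}}{m^\gamma-1}.
\end{align*}
The second term is $o(\alpha m^i)$ uniformly in $i$, thanks to the factor $\alpha^{\gamma-1}m^{-(1-\gamma)i}$. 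The first term is at most $\beta^\gamma\, i\leq c\,\alpha m^i$ under the assumption $\beta^\gamma\leq \alpha$, because $i m^{-i}$ is bounded. It can therefore be absorbed into the $m^i$ term by enlarging $B$ (on the lower side, by taking $\alpha$ large). This gives both inequalities in \eqref{eq:seq_a,b_behave}.

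\textbf{Main obstacle.} The delicate point is the lower bound for $\underline b_i$. The accumulated error $\sum_j\Bar b_j^\gamma$ contains a contribution $i\beta^\gamma$ that grows linearly in $i$. This contribution must be dominated by the main term $m'\alpha m^i/B$ and not by $\beta$ itself. I will handle it with the relative-size condition between $\beta$ and $\alpha$ described above, and I expect this condition to be automatically satisfied in the application to $\Xi_t$, where both are comparable to $\eps\sqrt n$. If that fails, the fallback is to keep the weaker form $\underline b_i\geq \beta/2+m'\alpha m^i/B$, which suffices for bounding $\tau''(\eps)$ by a constant multiple of $\eps\sqrt n$.
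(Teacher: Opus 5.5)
Your overall route is the paper's (Appendix~\ref{app_sec:lem_seq_grow}): an inductive product bound for the $a$-sequences, then the telescoped identity $\Bar b_i=\beta+m'\sum_{j<i}\Bar a_j+\sum_{j<i}\Bar b_j^\gamma$, subadditivity of $x\mapsto x^\gamma$, and a large choice of $B$. You are right that the statement needs $\alpha$ large and some relation such as $\beta^\gamma\lesssim\alpha$; already $\Bar b_1\le\beta+m'\alpha Bm$ forces $\beta^\gamma\le m'\alpha(Bm-1)$. The paper uses this silently, since its constant $K$ depends on $\beta^\gamma/\alpha$. Your Step~1 and your upper bound on $\Bar b_i$ are fine.

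The gap is the lower bound on $\underline b_i$. Under your assumption $\beta^\gamma\le\alpha$, the error term $i\beta^\gamma$ is of the same order $\alpha$ as the main term, so it cannot be absorbed ``by taking $\alpha$ large''. Concretely, with your estimates $\sum_{j<i}\underline a_j\ge\alpha m^{i-1}/A$ and $A=2$, at $i=1$ and $\beta^\gamma=\alpha$ you only get $\underline b_1-\beta\ge m'\alpha/2-\alpha-O(\alpha^\gamma)$. This is negative whenever $m'<2$, while the true value is $(m'-1)\alpha$.

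The fix is to use more of what you already have. Step~1 actually gives $\underline a_j\ge(1-\delta)\alpha m^j$ with $\delta=O(\alpha^{\gamma-1})$. Combining this with $\beta^\gamma\le\alpha$ and $i\le\sum_{j<i}m^j$ gives
\begin{align*}
\underline b_i-\beta\ \ge\ \bigl(m'(1-\delta)-1\bigr)\alpha\sum_{j<i}m^j-C^\gamma\alpha^\gamma\frac{m^{\gamma i}}{m^\gamma-1}.
\end{align*}
The coefficient $m'(1-\delta)-1$ is positive exactly because of the hypothesis $m'>1$, which your write-up never uses. Since $\sum_{j<i}m^j\ge m^{i-1}$ and the last term is $O(\alpha^{\gamma-1})\,\alpha m^i$, for $\alpha$ large this gives the claim with, e.g., $B=2mm'/(m'-1)$. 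Alternatively, assume $\beta^\gamma\le\theta\alpha$ with $\theta$ small in terms of $m,m'$.

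Finally, your expectation that $\beta^\gamma\le\alpha$ holds automatically in the application is not right. In Proposition~\ref{prop:bound_gen_size_ind}, $\alpha=\xi_{lR}$ and $\beta=\Xi_R$ are generation sizes at a fixed level $R$, summed over all $\alpha,\beta>R$; they are not of order $\eps\sqrt n$. Since $\Xi_R\ge\xi_{lR}$ can be much larger, the relation holds only with high probability, and it has to be imposed by restricting that sum.
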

For the sake of completeness, we provide a proof of this result in Appendix \ref{app_sec:lem_seq_grow}; it is essentially an extension of \cite[Lemma 4.11]{van2024random}. To see the usefulness of the previous result, let us first for convenience introduce the shorthand notation
\begin{align*}
    \nu_{n,i}:=\Exp{D_{n,(s_{i-1},s_i,s_{i+1})}},\textrm{ for }0\leq i \leq l-1.\numberthis \label{eq:def_nu_i}
\end{align*}
Let us then define
\begin{align*}
    m_n:=\nu_{n,0}\nu_{n,1}\dots\nu_{n,l-1}, \textrm{ and }m'_n:=\nu_{n,0}+\nu_{n,0}\nu_{n,1}+\dots+\nu_{n,0}\nu_{n,1}\dots \nu_{n,l-1}.\numberthis \label{eq:def_mn_mn'}
\end{align*}
Observe the following facts from \eqref{eq:generation_size_cTn}:
\begin{itemize}
    \item $m_n$ equals $\sM_n(c)$, the mean number of offspring of any vertex in $\cY_n$ other than $\varphi$. In particular, recalling that we denote the $p$-th  generation of $\cY_n$ by $Z'_p$, note that for any $p\geq 2$
    \begin{align*}
        \CExp{|Z'_p|}{|Z'_{p-1}|}=|Z'_{p-1}|m_n.\numberthis \label{eq:cond_mean_id_xi}
    \end{align*}
    \item Arguing similarly, it can be checked that for a vertex $v\neq \varphi$ of type $s_0$ in the tree $\cT_n$, for any $0\leq i\leq l-1$, $\nu_{n,0}\dots\nu_{n,i}$ is the expected number of descendants of $v$ in $\cT_n$ at distance exactly $i+1$ from $v$. As a consequence, for any $t\geq 2$ we have the identity
    \begin{align*}
        \CExp{\Xi_{lt}}{Z'_{l(t-1)}}=|Z'_{l(t-1)}|m_n'.\numberthis \label{eq:cond_mean_id_Xi}
    \end{align*}
\end{itemize}
At this point let us for convenience of writing define for any $t\geq 0$
\begin{align*}
    \xi_{t}:=|Z_{t}|,
\end{align*}
so that $\xi_{lt}=|Z'_t|$, the size of the $t$-th generation of $\cY_n$. Let us also define a useful random variable $\kappa_l$ inductively. Here $\kappa_1\stackrel{d}{=}D_{n,(s_{l-1},s_0,s_1)}$, and inductively, for $2\leq j \leq l$, given that we have already defined $\kappa_{j-1}$, define
\begin{align*}
    \kappa_j:=\sum_{i=1}^{\kappa_{j-1}}D^{(i)}_{n,(s_{j-2},s_{j-1},s_j)},\numberthis \label{eq:de_kappa_j}
\end{align*}
the summands above being all independent of the upper limit of the sum. To interpret this random variable, observe that for any $v\in \cT_n$ such that $v\neq \varphi$ and $v$ has type $s_0$, $\kappa_j$ equals in distribution the number of descendants of $v$ in $\cT_n$ at distance $j$ away from $v$, $0\leq j \leq l$. In particular, $\Exp{\kappa_j}=\nu_{n,0}\dots\nu_{n,j-1}$ where recall $\nu_{n,i}$ from \eqref{eq:def_nu_i}. Let us further define the random variable
\begin{align*}
    \kappa:=\kappa_1+\kappa_2+\dots+\kappa_{l}. \numberthis \label{eq:def_kappa}
\end{align*}
Thus, $\kappa$ has the interpretation that it is the total number of descendants in $\cT_n$ of a vertex $v\neq \varphi$ with type $s_0$, at distance at most $l$ away from $v$. Again, observe that $\Exp{\kappa}=m'_n$, recalling $m'_n$ from \eqref{eq:def_mn_mn'}.

We then have the following useful result that bounds the generation sizes in $\cT_n$. This result is inspired by \cite[Lemma 4.12]{van2024random}.

\begin{proposition}[Bounding generation sizes]\label{prop:bound_gen_size_ind}
     Fix some $R\geq 1$. For any $\alpha,\beta>0$, take $\underline{a}_0=\Bar{a}_0=\alpha$ and $\underline{b}_0=\Bar{b}_0=\beta$, and define $\underline{a}_i,\Bar{a}_i,\underline{b}_i,\Bar{b}_i$ inductively for $i\geq 1$ as in \eqref{eq:inductive_def_seq_a} and \eqref{eq:inductive_def_seq_b}, with $m=m_n$ and $m'=m_n'$ as defined in \eqref{eq:def_mn_mn'}. Then
    \begin{align*}
        \liminf_{n \to \infty}\CProb{\underline{a}_t\leq \xi_{l(R+t)}\leq \Bar{a}_t,\underline{b}_t\leq \Xi_{R+t}\leq \Bar{b}_t\textrm{ for all }t\geq 1}{\xi_{lR}=\alpha,\Xi_R=\beta}\geq 1-O\left(\alpha^{1-2\gamma}\right),\\ \numberthis \label{eq:bounding_gen_sizes}
    \end{align*}
    where the constant hidden in the $O(\cdot)$ term above is universal, i.e., independent of $R, \alpha,\beta$.
\end{proposition}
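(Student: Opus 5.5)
We follow the approach of \cite[Lemma 4.12]{van2024random}: a one-step concentration estimate for generation sizes of $\cY_n$ (and for the partial sums $\Xi$), combined with a union bound over $t$. Under \eqref{assump:unif_bdd_degs} all offspring variables $D_{n,(s_{i-1},s_i,s_{i+1})}$ are bounded by $\bB$. Hence the one-cycle offspring variable $d_{n,l}=\kappa_l$ of $\cY_n$ (a non-root vertex has type $s_0$ once $R\geq 1$), and likewise $\kappa$ from \eqref{eq:def_kappa}, have variances bounded uniformly in $n$, say by $\sigma^2$; indeed $\kappa\leq \bB+\bB^2+\dots+\bB^l$. This uniform second-moment bound is the only input needed beyond \eqref{eq:cond_mean_id_xi} and \eqref{eq:cond_mean_id_Xi}.

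\textbf{One-step estimate.} Condition on $\xi_{l(R+t)}=x$ and on $\Xi_{R+t}=y$. By the branching property, $\xi_{l(R+t+1)}$ is a sum of $x$ i.i.d.\ copies of $\kappa_l$, with conditional mean $m_n x$ and conditional variance at most $\sigma^2 x$. Similarly, $\Xi_{R+t+1}-\Xi_{R+t}$ is a sum of $x$ i.i.d.\ copies of $\kappa$, with mean $m'_n x$ and variance at most $\sigma^2 x$; this matches the recursion \eqref{eq:inductive_def_seq_b}, whose increment is $m'\times$(previous $a$). By Chebyshev,
\begin{align*}
\Prob{|\xi_{l(R+t+1)}-m_n x|>x^{\gamma}}\leq \sigma^2 x^{1-2\gamma}.
\end{align*}
The same bound holds for the $\Xi$-increment with deviation $x^\gamma$. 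Since $x\leq \bar b_t$ whenever $\xi\leq \Xi$, this deviation is dominated by $\bar b_t^\gamma$, which is exactly the slack allowed in \eqref{eq:inductive_def_seq_b}. Because the upper and lower bounds in \eqref{eq:inductive_def_seq_a} use $\bar a_i^\gamma\geq x^\gamma$ for every $x\in[\underline a_t,\bar a_t]$, induction gives the following: on the good event up to step $t$, the step-$(t+1)$ bounds fail with conditional probability at most $2\sigma^2\underline a_t^{1-2\gamma}$. The exponent is negative because $\gamma>1/2$.

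\textbf{Union bound.} By Lemma \ref{lem:seq_grow}, $\underline a_t\geq \alpha m_n^t/A$, and $m_n\to\sM(c)>1$, so $m_n\geq m_*>1$ for large $n$. Summing the one-step failure probabilities over $t\geq 0$ gives
\begin{align*}
\sum_{t\geq 0}2\sigma^2 A^{2\gamma-1}\alpha^{1-2\gamma}m_*^{-(2\gamma-1)t}=O(\alpha^{1-2\gamma}),
\end{align*}
a convergent geometric series. The constant depends only on $\sigma,A,m_*,\gamma$ and not on $R,\alpha,\beta$. Taking $\liminf_n$ yields \eqref{eq:bounding_gen_sizes}.

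\textbf{Main obstacle.} The delicate point is ensuring that the constants $A,B$ in Lemma \ref{lem:seq_grow} and the variance bound $\sigma^2$ are uniform in $n$ (and in $\alpha$). This requires applying Lemma \ref{lem:seq_grow} with $m=m_n$, $m'=m'_n$ ranging over a compact subset of $(1,\infty)\times(0,\infty)$, which is guaranteed by $m_n\to\sM(c)$ and $m'_n\to$ its finite limit. Without bounded degrees, the Chebyshev step would instead have to be replaced by a truncated-moment or Marcinkiewicz–Zygmund bound under a $(1+\delta)$-moment assumption. A second, more notational, point is to check that conditioning on $(\xi_{lR},\Xi_R)$ does not alter the offspring laws of the $Z_{lR}$ vertices. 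This holds because generation $lR$ consists of type-$s_0$ vertices whose parents have type $s_{l-1}$, so their descendant subtrees are i.i.d.\ copies driven by $\kappa_l$ and $\kappa$.
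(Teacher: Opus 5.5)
Your proposal is correct and follows essentially the paper's route: a conditional Chebyshev bound for each one-cycle step, using the bounded-degree variance estimate for sums of $\xi_{l(R+t)}$ i.i.d.\ copies of $\kappa_l$ and $\kappa$, followed by a union bound over $t$ that becomes a geometric series via Lemma~\ref{lem:seq_grow}. The differences are cosmetic. You take the current size $x^\gamma$ as the deviation threshold for both the $\xi$-step and the $\Xi$-increment, which gives a per-step error of $2\sigma^2\underline{a}_t^{1-2\gamma}$. The paper instead uses the thresholds $\Bar{a}_{t-1}^\gamma$ and $\Bar{b}_{t-1}^\gamma$, giving the bound $F(\bB)\Bar{a}_{t-1}^{1-2\gamma}+H(\bB)\Bar{a}_{t-1}/\Bar{b}_{t-1}^{2\gamma}$.
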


\begin{proof}
For any $t\geq 1$ define the event
\begin{align*}
    \cE_t:=\{\underline{a}_t\leq \xi_{l(R+t)}\leq \Bar{a}_t,\underline{b}_t\leq \Xi_{R+t}\leq \Bar{b}_t,\xi_{lR}=\alpha,\Xi_{R}=\beta\}.\numberthis \label{eq:def_cEt}
\end{align*}
We aim to show $\Prob{\cap_{t \geq 1}\cE_t}\geq 1-O(\alpha^{1-2\gamma})$, for which, using the bound
\begin{align*}
    \Prob{\left(\cap_{t \geq 1}\cE_t \right)^c}\leq \sum_{t\geq 1}\CProb{\cE_{t}^c}{\cE_{t-1}},\numberthis \label{eq:union_bd_complement_seq_event}
\end{align*}
it is enough to show that
\begin{align*}
    \sum_{t\geq 1}\CProb{\cE_{t}^c}{\cE_{t-1}}=O(\alpha^{1-2\gamma}).\numberthis \label{eq:toshow_ind_gen_size}
\end{align*}
By a union bound, the summand above is at most
\begin{align*}
    \CProb{\cE_t^c}{\cE_{t-1}}\leq \CProb{\{\xi_{lt}<\underline{a}_t\}\cup\{\xi_{lt}>\Bar{a}_t\}}{\cE_{t-1}}+\CProb{\{\Xi_{t}<\underline{b}_t\}\cup\{\Xi_{t}>\Bar{b}_t\}}{\cE_{t-1}}.
\end{align*}

Note that conditionally on $\cE_{t-1}$, on the event $\{\xi_{lt}<\underline{a}_t\}\cup\{\xi_{lt}>\Bar{a}_t\}$ we have
\begin{align*}
    \xi_{lt}-m_n\xi_{l(t-1)}>\Bar{a}_t-m\Bar{a}_{t-1}=(\Bar{a}_{t-1})^\gamma \textrm{ and } \xi_{lt}-m_n\xi_{l(t-1)}<\underline{a}_t-m\underline{a}_{t-1}=(\Bar{a}_{t-1})^\gamma
\end{align*}
so that in particular
\begin{align*}
    |\xi_{lt}-m_n\xi_{l(t-1)}|=|\xi_{lt}-\CExp{\xi_{lt}}{\xi_{l(t-1)}}|>(\Bar{a}_{t-1})^\gamma,\numberthis \label{eq:cheby_error_xi}
\end{align*}
using \eqref{eq:cond_mean_id_xi} for the first equality above. Thus, by a conditional Chebyshev's inequality,
\begin{align*}
    \CProb{\{\xi_{lt}<\underline{a}_t\}\cup\{\xi_{lt}>\Bar{a}_t\}}{\cE_{t-1}}\leq \frac{\Exp{\indE{\cE_{t-1}}\CExp{(\xi_{lt}-\CExp{\xi_{lt}}{\xi_{l(t-1)}})^2}{\xi_{l(t-1)}}}}{(\Bar{a}_{t-1})^{2\gamma}}.\numberthis \label{eq:cond_cheby_xi}
\end{align*}
Recall that given $\xi_{l(t-1)}$, the distribution of $\xi_{lt}$ is a sum of $\xi_{l(t-1)}$ many i.i.d.\ random variables with distribution identical to $d_{n,l}$ corresponding to $j=l$ as in \eqref{eq:offspring_law_Yn}. Using this with the fact that the variance of an independent sum is a sum of the individual summand variances, and the assumption that the degrees are uniformly bounded from above by $\bB$, we deduce the RHS above is at most
\begin{align*}
    \CProb{\{\xi_{lt}<\underline{a}_t\}\cup\{\xi_{lt}>\Bar{a}_t\}}{\cE_{t-1}}\leq \frac{\Exp{\indE{\cE_{t-1}}\xi_{l(t-1)}F(\bB)}}{(\Bar{a}_{t-1})^{2\gamma}}\leq F(\bB)(\Bar{a}_{t-1})^{1-2\gamma},\numberthis \label{eq:cond_cheby_xi_actual_error}
\end{align*}
where $F(\bB)$ is a constant depending on $\bB$ and independent of $t$.

One can deduce a similar upper bound on the conditional probability of the event $\{\Xi_{t}<\underline{b}_t\}\cup\{\Xi_{t}>\Bar{b}_t\}$ with a few minor subtleties. Let us just point these out avoiding repetition. One ends up with an event as in \eqref{eq:cheby_error_xi}, but with $(\Bar{b}_{t-1})^\gamma$ on the RHS, using which and conditional Chebyshev one deduces a similar upper bound as in \eqref{eq:cond_cheby_xi}, but with the conditional expectation on the numerator replaced by $\CExp{(\Xi_{t}-\CExp{\Xi_{t}}{\xi_{l(t-1)}})^2}{\xi_{l(t-1)}}$ and the denominator replaced by $(\Bar{b}_{t-1})^{2\gamma}$. Now, given $\xi_{l(t-1)}$ note that $\Xi_t$ is an i.i.d.\ sum of $\xi_{l(t-1)}$ many random variables with the same law as the variable $\kappa$, recalling its definition and interpretation from \eqref{eq:def_kappa}. Thus, under the assumption that all degrees are at most $\bB$, $\kappa$ is a bounded random variable, which means $\CExp{(\Xi_{t}-\CExp{\Xi_{t}}{\xi_{l(t-1)}})^2}{\xi_{l(t-1)}}\leq H(\bB)\xi_{l(t-1)}$ for a constant $H(\bB)$ depending only on $B$ and not on $t$. We therefore obtain an upper bound similar to \eqref{eq:cond_cheby_xi_actual_error},
\begin{align*}
    \CProb{\{\Xi_{t}<\underline{b}_t\}\cup\{\Xi_{t}>\Bar{b}_t\}}{\cE_{t-1}} \leq H(\bB)\frac{\Bar{a}_{t-1}}{(\Bar{b}_{t-1})^{2\gamma}}.\numberthis \label{eq:cond_cheby_Xi}
\end{align*}
Combining Lemma \ref{lem:seq_grow} with \eqref{eq:cond_cheby_xi_actual_error} and \eqref{eq:cond_cheby_Xi}, we have
\begin{align*}
    \CProb{\cE_t^c}{\cE_{t-1}}&\leq \CProb{\{\xi_{lt}<\underline{a}_t\}\cup\{\xi_{lt}>\Bar{a}_t\}}{\cE_{t-1}}+\CProb{\{\Xi_{t}<\underline{b}_t\}\cup\{\Xi_{t}>\Bar{b}_t\}}{\cE_{t-1}} \\&\leq F(\bB)A'\alpha^{1-2\gamma}(m^{1-2\gamma})^{t-1}+H(\bB)B'(m')^{-2\gamma}\alpha^{1-2\gamma}(m^{1-2\gamma})^{t-1},
\end{align*}
where $A',B'>0$ are universal constants, independent of $\alpha$ and $\beta$. Thus, recalling the union bound \eqref{eq:union_bd_complement_seq_event}, we obtain
\begin{align*}
    \Prob{\left(\cap_{t \geq 1}\cE_t \right)^c}\leq \alpha^{1-2\gamma}\sum_{t\geq 1}\left(F(\bB)A'(m^{1-2\gamma})^{t-1}+H(\bB)B'(m')^{-2\gamma}(m^{1-2\gamma})^{t-1}\right),\numberthis \label{eq:final_UB_complement_seqevent}
\end{align*}
Recall in the above display we take $m=m_n,m'=m'_n$, where $m_n,m'_n$ are as in \eqref{eq:def_mn_mn'}, and note that thanks to assumptions \eqref{assump:regularity_1_2}, \eqref{eq:assump_regularity_tensor}, 
\begin{align*}
    \lim_{n \to \infty}m_n,\lim_{n \to \infty}m'_n\geq \sM(c)>1.
\end{align*}
Thus, we note that the sum on the RHS of \eqref{eq:final_UB_complement_seqevent} is firstly convergent, and secondly, is at most a constant $L=L(\bB,\sM(c))>0$, for all large $n$. The result follows.
\end{proof}

\begin{remark}[Limit of $m'_n$]\label{rem:limit_m'_n}
   Recalling the sequence $(m'_n)_{n \geq 1}$ from \eqref{eq:def_mn_mn'}, for later use we record the limit of $m'_n$ as 
    \begin{align*}
        \lim_{n \to \infty}m'_n=\nu_0+\nu_0\nu_1+\dots+\nu_0\nu_1\dots \nu_{l-1}=:\nu',
    \end{align*}
    where recalling $\nu_{n,i}$ from \eqref{eq:def_nu_i}, we define $\nu_i$ as its limit,
    \begin{align*}
        \lim_{n\to \infty}\nu_{n,i}=\Exp{D_{(s_{i-1},s_i,s_{i+1})}}=:\nu_i.
    \end{align*}
    In particular, note that $\nu'>\nu_0\dots\nu_{l-1}=\sM(c)>1$.
\end{remark}

\begin{remark}[Efficacy of the result]
Note that the error bound in Proposition \ref{prop:bound_gen_size_ind} is useful only when $\alpha$ is sufficiently large. \end{remark}

\begin{remark}[On the uniform boundedness of the degrees]
    Let us also take this moment to comment on the assumption of uniform boundedness from above by $\bB$ of the degrees, as we use this in the last proof, in particular at \eqref{eq:cond_cheby_xi_actual_error} and \eqref{eq:cond_cheby_Xi} to say that the conditional variance arising in those bounds are respectively at most $F(\bB)$ and $H(\bB)$ times $\xi_{l(t-1)}$. 
    
    This assumption is not required if the variables $D_{(u,v,w)}$ admit finite $(1+\delta)$ moments for some $\delta>0$ as remarked at the beginning of this section. To see this, assume the latter, and run the proof up to \eqref{eq:cheby_error_xi}, and then instead of applying a conditional Chebyshev to obtain \eqref{eq:cond_cheby_xi}, rather apply a conditional Markov with centered $(1+\delta)$ moments. As a result the exponent $1-2\gamma$ appearing on the RHS of \eqref{eq:cond_cheby_xi_actual_error} and \eqref{eq:cond_cheby_Xi} changes to simply $-\delta$, and the order of the probability on the LHS of \eqref{eq:final_UB_complement_seqevent} becomes at most $\alpha^{-\delta}$, which is good for our purposes. As will be evident shortly in the proof of Proposition \ref{prop:enough_unpaired_o_1_fresh}, we will simply need that this error can be made as small as possible by making $\alpha$ large, equivalently, the RHS of \eqref{eq:bounding_gen_sizes} as close to $1$ as possible, and this is possible whenever $\delta>0$.
\end{remark}

We can finally conclude this section by providing the proof of Proposition \ref{prop:enough_unpaired_o_1_fresh}.
\begin{proof}[Proof of Proposition \ref{prop:enough_unpaired_o_1_fresh}]
Recall the stopping time $\tau$ from \eqref{eq:stop_time_rest_coupconf} till which we do not see any restricted coupling conflict, and in particular the coupling of $A_c(o_1,t)$ with $\cT_n(o_1,t)$ is valid with all the properties as in Lemma \ref{lem:rest_coupling} for any $t<\tau$. We begin by bounding the probability of interest as
\begin{align*}
    &\Prob{\tau'(o_1,\eps)<\sqrt{\eps n},|D_c(o_1,\tau'(o_1,\eps))|>g(\eps)\sqrt{n}}\\&\geq \Prob{\tau'(o_1,\eps)<\sqrt{\eps n},|D_c(o_1,\tau'(o_1,\eps))|>g(\eps)\sqrt{n},\tau>\sqrt{\eps n}}\\& = \Prob{\tau''(\eps)<\sqrt{\eps n},\xi_{l(t(\eps)+1)}>g(\eps)\sqrt{n},\tau>\sqrt{\eps n}}\\&\geq \Prob{\tau''(\eps)<\sqrt{\eps n},\xi_{l(t(\eps)+1)}>g(\eps)\sqrt{n}}-\Prob{\tau\leq \sqrt{\eps n}}\\& =\Prob{\Xi_{t(\eps)+1}<\sqrt{\eps n},\xi_{l(t(\eps)+1)}>g(\eps)\sqrt{n}}-o_{\eps,n}(1),\numberthis \label{eq:decomp_first}
\end{align*}
thanks to Lemma \ref{prop:coupling_valid_epssqrtn}, where recall $t(\eps)$ from \eqref{eq:ineq_t(eps)_epssqrtn}, and note that the equality above in the third line follows from the definition of the coupling. So far, using our coupling, we have managed to reduce the matter completely to a probability concerning the branching process $\cT_n$, and the result will follow if we can show that $\Prob{\Xi_{t(\eps)+1}<\sqrt{\eps n},\xi_{l(t(\eps)+1)}>g(\eps)\sqrt{n}}$ is strictly positive for all large $n$ and for all small $\eps>0$.

For some fixed $R$, consider bounding as
\begin{align*}
    &\Prob{\Xi_{t(\eps)+1}<\sqrt{\eps n},\xi_{l(t(\eps)+1)}>g(\eps)\sqrt{n}}\geq \Prob{\Xi_{t(\eps)+1}<\sqrt{\eps n},\xi_{l(t(\eps)+1)}>g(\eps)\sqrt{n},t(\eps)>R}.
\end{align*}
Recall the event $\cE_t$ from \eqref{eq:def_cEt} where the sequences $\underline{a}_t,\Bar{a}_t,\underline{b}_t,\Bar{b}_t$ are defined as in the statement of Proposition \ref{prop:bound_gen_size_ind} for some $\alpha,\beta>0$. We further bound from below the RHS of the last display above by
\begin{align*}
    \sum_{\alpha,\beta>R}\CProb{\Xi_{t(\eps)+1}<\sqrt{\eps n},\xi_{l(t(\eps)+1)}>g(\eps)\sqrt{n},t(\eps)>R,\cap_{t \geq 1}\cE_t}{\cE_0(\alpha,\beta)}\Prob{\cE_0(\alpha,\beta)},\numberthis \label{eq:lotp_from gen R}
\end{align*}
where we define
\begin{align*}
    \cE_0(\alpha,\beta):=\{\xi_{lR}=\alpha,\Xi_R=\beta\}.
\end{align*}
Note that on the event $\cE_0(\alpha,\beta)\cap(\cap_{t\geq 1}\cE_t)\cap \{t(\eps)>R\}$, we have 
\begin{align*}
    \beta+\frac{m'\alpha m^{(f-R)}}{B}\leq \Xi_{f}\leq \beta+m'\alpha B m^{(f-R)},\textrm{ and }\frac{\alpha m^{(f-R)}}{A}\leq  \xi_{lf}\leq \alpha A m^{(f-R)},\numberthis \label{eq:recursive_bounds_incG}
\end{align*}
for any $f\in \{t(\eps),t(\eps)+1\}$, thanks to Lemma \ref{lem:seq_grow}.
Thus, defining 
\begin{align*}
    \cG:=\cE_0(\alpha,\beta)\cap(\cap_{t\geq 1}\cE_t)\cap \{t(\eps)>R\}\cap \{\Xi_{t(\eps)+1}<\sqrt{\eps n}\}\cap\{\xi_{l(t(\eps)+1)>g(\eps)\sqrt{n}}\},\numberthis \label{eq:big_good_event_cG}
\end{align*}
and recalling from \eqref{eq:cond_decomp_t(eps)_small} that $\Xi_{t(\eps)}<\eps\sqrt{n}$, on the event $\cG$, using \eqref{eq:recursive_bounds_incG} we have
\begin{align*}
  \beta+\frac{m'\alpha m^{(t(\eps)-R)}}{B}\leq \Xi_{t(\eps)}\leq \eps\sqrt{n}\implies  t(\eps)\leq \frac{\log\left(\frac{B(\eps \sqrt{n}-\beta)}{m'\alpha} \right)}{\log m}+R.\numberthis \label{eq:t_eps_UB}
\end{align*}
Since on $\cG$ we also have $\Xi_{t(\eps)+1}\leq \beta+m'\alpha B m^{(t(\eps)+1-R)}$ from \eqref{eq:recursive_bounds_incG}, this gives using the last upper bound on $t(\eps)$,
\begin{align*}
    \Xi_{t(\eps)+1}\leq \beta+B^2m(\eps\sqrt{n}-\beta).
\end{align*}
Recalling $m=m_n$ tends to $\sM(c)\in (1,\infty)$ as $n \to \infty$, and $B>1$, we note that the RHS above for all large $n$ is at most $B^2 (\sM(c)+\eps)\eps \sqrt{n}$, which is at most $\sqrt{\eps n}$ by choosing $\eps>0$ sufficiently small. Thus, for all large $n$ and small $\eps>0$
\begin{align*}
    \cE_0(\alpha,\beta)\cap(\cap_{t\geq 1}\cE_t)\cap \{t(\eps)>R\}\cap \{\Xi_{t(\eps)+1}<\sqrt{\eps n}\}=\cE_0(\alpha,\beta)\cap(\cap_{t\geq 1}\cE_t)\cap \{t(\eps)>R\}.
\end{align*}
We conclude that for all large $n$ and $\eps>0$ sufficiently small \eqref{eq:lotp_from gen R} equals
\begin{align*}
    \sum_{\alpha,\beta>R}\CProb{\xi_{l(t(\eps)+1)}>g(\eps)\sqrt{n},t(\eps)>R,\cap_{t \geq 1}\cE_t}{\cE_0(\alpha,\beta)}\Prob{\cE_0(\alpha,\beta)}.\numberthis \label{eq:lotp_from gen R_1}
\end{align*}
Next, again using \eqref{eq:ineq_t(eps)_epssqrtn} and \eqref{eq:recursive_bounds_incG}, we have on $\cG$
\begin{align*}
    \eps \sqrt{n}\leq \Xi_{t(\eps)+1}\leq \beta+m'\alpha B m^{(t(\eps)+1-R)}\implies t(\eps)\geq \frac{\log \left(\frac{\eps \sqrt{n}-\beta}{m'\alpha B} \right)}{\log m}+R-1,\numberthis \label{eq:t(eps)_LB}
\end{align*}
which implies using \eqref{eq:recursive_bounds_incG}
\begin{align*}
    \xi_{l(t(\eps)+1)}\geq \frac{\alpha}{A}\cdot \frac{\eps \sqrt{n}-\beta}{m'\alpha B}\geq g(\eps) \sqrt{n}
\end{align*}
for all large $n$, where we take 
\begin{align*}
    g(\eps)=\frac{\eps}{2AB(\nu'+\eps)}>0,\numberthis \label{eq:choice_g}
\end{align*}
where recall the limit $\nu'$ of the sequence $m'_n$ from Remark \ref{rem:limit_m'_n}. Working with this choice of $g$, \eqref{eq:lotp_from gen R_1} thus equals
\begin{align*}
    \sum_{\alpha,\beta>R}\CProb{t(\eps)>R,\cap_{t \geq 1}\cE_t}{\cE_0(\alpha,\beta)}\Prob{\cE_0(\alpha,\beta)}.\numberthis \label{eq:lotp_from gen R_2}
\end{align*}
We bound from below the last display as
\begin{align*}
     &\sum_{\alpha,\beta>R}\CProb{\cap_{t \geq 1}\cE_t}{\cE_0(\alpha,\beta)}\Prob{\cE_0(\alpha,\beta)}-\Prob{t(\eps)\leq R,\xi_{lR}>R,\Xi_R>R}\\&=\sum_{\alpha,\beta>R}\CProb{\cap_{t \geq 1}\cE_t}{\cE_0(\alpha,\beta)}\Prob{\cE_0(\alpha,\beta)}-\Prob{t(\eps)\leq R,\xi_{lR}>R}\numberthis \label{eq:cond_decomp_t(eps)_small}
\end{align*}
since trivially $\Xi_R>R$. Now, $\Prob{t(\eps)\leq R,\xi_{lR}>R}\leq \Prob{t(\eps)\leq R}$ and recalling \eqref{eq:ineq_t(eps)_epssqrtn}, we note that this event implies the event that by generation $R$ the process $\cT_n$ gives birth to $\eps\sqrt{n}$ many vertices. However, since $R$ is fixed, using the relation \eqref{eq:generation_size_cTn}, it can be checked using Wald's identity and conditions \eqref{assump:regularity_1_2}, \eqref{eq:assump_regularity_tensor} that the expected total size up to generation $R$ of the tree $\cT_n$ stays bounded in $n$ as $n \to \infty$. In particular, by a Markov inequality, we have
\begin{align*}
    \Prob{t(\eps)\leq R}\leq \frac{G(R)}{\eps\sqrt{n}},\numberthis \label{eq:t(eps)_diverge}
\end{align*}
where $G(R)$ is a function of $R$ (possibly blowing up as $R\to \infty$) but independent of $n$. Overall, using Proposition \ref{prop:bound_gen_size_ind}, and noting $\sum_{\alpha,\beta>R}\Prob{\cE_0(\alpha,\beta)}=\Prob{\xi_{lR},\Xi_{R}>R}=\Prob{\xi_{lR}>R}$ we obtain a lower bound on \eqref{eq:cond_decomp_t(eps)_small} as 
\begin{align*}
    (1-O(R^{1-2\gamma}))\Prob{\xi_{lR}>R}-\frac{G(R)}{\eps \sqrt{n}},\numberthis \label{eq:lb_after_Markov_t(eps)}
\end{align*}
where the constant hidden by the $O(\cdot)$ term above is independent of $R,\alpha,\beta$. We conclude that
\begin{align*}
   &\liminf_{n \to \infty}\left(\sum_{\alpha,\beta>R}\CProb{\cap_{t \geq 1}\cE_t}{\cE_0(\alpha,\beta)}\Prob{\cE_0(\alpha,\beta)}-\Prob{t(\eps)\leq R,\xi_{lR}>R}\right)\\& \geq (1-O(R^{1-2\gamma}))\liminf_{n \to \infty}\Prob{\xi_{lR}>R}.\numberthis \label{eq:liminf_n_LB_1}
\end{align*}
Now, recall that $\xi_{lR}$ is precisely the size of the $R$-th generation of the BGW tree $\cY_n$. It follows from Lemma \ref{lem:on_Yn_Y-infty} that 
\begin{align*}
    \liminf_{n \to \infty}\Prob{\xi_{lR}>R}=\Prob{\xi'_{R}>R},
\end{align*}
where for any $j\geq 0$ we denote by $\xi'_j$ the size of generation $j$ of the limiting process $\cY_\infty$. The last probability is at least $\Prob{\xi''_{R-1}>R}$, where we denote by $\xi''_j$ the size of the $j$-th generation of the BGW subtree corresponding to any arbitrary first generation vertex in $\cY_\infty$, which has mean offspring number $\sM(c)>1$. It follows from standard results on BGW trees (see e.g. \cite[Chapter 1]{athreya2012branching}) that $\xi''_j/\sM(c)^j$ almost surely approaches a non-negative random variable $W$, so that $\Prob{\xi''_{R-1}>R}\geq 1-o_R(1)$, where $o_R(1)$ denotes an error that goes to $0$ as $R\to \infty$. Overall, from \eqref{eq:liminf_n_LB_1}, \eqref{eq:cond_decomp_t(eps)_small} and \eqref{eq:lotp_from gen R}, with the choice of $g$ as specified in \eqref{eq:choice_g}, we note that as $n \to \infty$, the RHS of \eqref{eq:decomp_first} is
\begin{align*}
    \liminf_{n \to \infty} \Prob{\Xi_{t(\eps)+1}<\sqrt{\eps n},\xi_{l(t(\eps)+1)}>g(\eps)\sqrt{n}}-o_{\eps}(1)\geq (1-O(R^{1-2\gamma}))(1-o_R(1))-o_\eps(1),
\end{align*}
for any $R$ large but fixed such that the first term above is strictly positive, and where $o_R(1)$ and $o_\eps(1)$ denotes error terms that to go $0$ when one lets respectively $R\to \infty$ followed by $\eps\to 0$. Letting finally $\eps\to 0$ followed by $R\to \infty$, we obtain that
\begin{align*}
    \lim_{\eps \to 0}\liminf_{n \to \infty} \Prob{\tau'(o_1,\eps)<\sqrt{\eps n},|D(o_1,\tau'(o_1,\eps))|>g(\eps)\sqrt{n}}=1,
\end{align*}
concluding the proof. \end{proof}

\section{Typical distances: proof of Theorem \ref{thm:dist}}\label{sec:dist}

In this section we provide the proof of Theorem \ref{thm:dist}. As usual, recalling $\G_n$ from Corollary \ref{cor:sample_Gn}, it is enough to prove the theorem with $G_n$ replaced by $\G_n$.
\subsection{Subcritical regime}
Let us begin with the proof of the first part, in the subcritical ($\eta=1$) regime, which is straightforward.
\begin{proof}[Proof of Theorem \ref{thm:dist}(1.)]
    Since $\eta=1$, by Theorem \ref{thm:giants_MCMs}, $\frac{|\cC_n^{(1)}|}{n}\plim 0$. Now,
    \begin{align*}
        \Prob{d_{\G_n}(o_1,o_2)<\infty}&=\Exp{\CProb{d_{\G_n}(o_1,o_2)<\infty}{\G_n,o_2}}\\&=\Exp{\CProb{o_1\in \cC(o_2)}{\G_n,o_2}}\\&=\Exp{\frac{|\cC(o_2)|}{n-1}}\leq \Exp{\frac{|\cC_n^{(1)}|}{n-1}},
    \end{align*}
    and the final term above tends to $0$ by dominated convergence and the fact that the term inside the expectation converges to $0$ in probability.
\end{proof}
\subsection{Supercritical regime}
We next focus on the supercritical ($\eta<1$) regime. We begin with the upper bound on the typical distance. This is a by-product of the proof of Theorem \ref{thm:giants_MCMs}.
\begin{proof}[Proof of Theorem \ref{thm:dist}(2.)]
    Our goal is to show that there exists $j\in [k]$ and a constant $K^*>0$ with
    \begin{align*}
        \liminf_{n \to \infty}\CProb{d_{\G_n}(o_1,o_2)<K^*\log n}{o_1,o_2\in \cN^{(j)}}=1.\numberthis \label{eq:dist_supcrit_UB_TS}
    \end{align*}
    Since $\eta<1$, recalling Theorem \ref{thm:extinct_1dep}, there is a loop $c$ with $\sM(c)>1$. We continue using the same notations as in Section \ref{sec:proof_supcrit_giant}: we write $c=(u=u_0,u_1,\dots,u_{l-1},u_l=u_0)$, with $u_i=(s_i,s_{i+1},s_{i+2})$, the subscripts of $s$ always being $\bmod l$.

    We claim that we can find $K_2$ such that \eqref{eq:dist_supcrit_UB_TS} is true with $j=s_0$. Let us recall the simultaneous restricted exploration processes from both $o_1$ and $o_2$ as described after Remark \ref{rem:expl_any_loop_c_stop_expl}: we have a Markov process $(\cA_t,\cB_t)_{t \geq 0}$, with $(\cA_t,\cB_t)=(A_c(o_1,t),\{o_2\})$ for all $0\leq t \leq \tau'(o_1,\eps)$ and $(\cA_{\tau'(o_1,\eps)+s},\cB_{\tau'(o_1,\eps)+s})=(A_c(o_1,\tau'(o_1,\eps)),A'_{\overleftarrow{c}}(o_2,s))$ for any $s>0$, where recall the stopping times $\tau'(o_1,\eps)$ from \eqref{eq:stop_time_from_o_1} and $\tau'(o_2,\eps)$ from \eqref{eq:stop_time_from_o_2}, and where $A_c(o_1,t)$ and $A_{\overleftarrow{c}}(o_2,t)$ are respectively the restricted exploration clusters from $o_1$ and $o_2$, as guided by respectively the loops $c$ and its reversal $\overleftarrow{c}$. Recall also the perfect coupling events $\cF(o_1)$ from \eqref{eq:event_Fo1} and $\cF(o_2)$ from \eqref{eq:event_Fo2}.

Analogous to the proof of Proposition \ref{prop:connection_under_truncation}, it is enough to show \eqref{eq:dist_supcrit_UB_TS} with the probability replaced by a conditional probability, given $\cF(o_1)\cap \cF(o_2)$. Recall the branching process tree $\cT_n$ from Definition \ref{def:BP_cT_n} and the stopping time $\tau''(\eps)$ from \eqref{eq:def_tau''}, which is the analogue of the stopping time $\tau'(o_1,\eps)$ on the tree $\cT_n$. 

One can define analogously another branching process tree $\cT_n'$ as in Definition \ref{def:BP_cT_n}, but corresponding to the loop $\overleftarrow{c}$ and the exploration cluster $A_{\overleftarrow{c}}(o_2,t)$, and a stopping time $\tau^*(\eps)$ (similar to $\tau''(\eps)$) that corresponds to the stopping time $\tau'(o_2,\eps)$ on the tree $\cT'_n$. These details are direct analogues, and are left to the reader.

Observe that by the proof of Proposition \ref{prop:connection_under_truncation}, with conditional probability (given $\cF(o_1)\cap \cF(o_2)$) bounded away from zero, we discover a path from $o_1$ to $o_2$, with length at most
\begin{align*}
    H(o_1)+H(o_2)+1,\numberthis \label{eq:hopcount_UB}
\end{align*}
    where $H(o_1)$ is the \emph{height}\footnote{The \emph{height} of any rooted tree is the maximal graph distance of any vertex from the root in the tree.} of the tree $\cT_n(\tau''(\eps))$ and $H(o_2)$ is the height of the tree $\cT'_n(\tau^*(\eps))$. 
    
    But recall from \eqref{eq:ineq_t(eps)_epssqrtn} (and the discussion just above it) that at time $t=\tau''(\eps)$ we precisely finish exploring $(t(\eps)+1)l$ generations of $\cT_n$, in the breadth-first exploration process $\cT_n(t)$ of $\cT_n$, so that, by definition, $H(o_1)=(t(\eps)+1)l$. This is at most $(K_2 \log n)/2$ for some constant $K_2$, by \eqref{eq:t_eps_UB}. An exactly similar argument, but now working with $\cT'_n$ and $\tau^*(\eps)$, establishes $H(o_2)$ is also at most $(K_2 \log n)/2$ (possibly increasing the value of the constant $K_2$, if needed). This concludes that \eqref{eq:hopcount_UB} is at most $K_2 \log n$ (again, by slightly increasing the value of the constant $K_2$).
\end{proof}

 Let us finally focus on the lower bound in part (3.) of Theorem \ref{thm:dist}. The argument for this is standard, and together with a monotonicity observation, is very similar to the typical distance lower bound argument of the proof of \cite[Theorem 3]{van2023local}. It is a bit long, but quite straightforward. We choose to be a little informal here, and provide a sketch only, leaving the details to the reader. 

 \begin{proof}[Proof sketch of Theorem \ref{thm:dist}(3.)]
     Recall we assume all pairs $(i,j)$ are GOOD, so that by Corollary \ref{cor:sample_Gn}, $G_n$ can be sampled by sampling unipartite configuration models  $CM_n^{i,i}$ (conditioned on simplicity) within part $\cN^{(i)}$ and bipartite configuration models $CM_n^{i,j}$ (conditioned on simplicity) across parts $\cN^{(i)}$ and $\cN^{(j)}$ for $i\neq j$, for all $i,j\in [k]$, and taking the union of all these edges. The resulting graph is $\G_n$, and it is enough to establish the lower bound statement with $\G_n$ replacing $G_n$.

     Working with configuration models enables us to harness a particularly useful monotonicity property, which we use to reduce the problem further. Namely, for any (unipartite or bipartite) configuration model, it is straightforward to check that if we have two degree sequences $\bd=(\bd_v:v\in V)$ and $\bd'=(\bd'_v:v\in V)$ on the same vertex set $V$, such that $\bd'$ dominates $\bd$ pointwise (i.e., $\bd_v\leq \bd'_v$ for each $v$), then there is a coupling such that $CM(\bd')$ contains $CM(\bd)$ as a subgraph. For example, for each vertex, one can first pair $\bd_v$ many half-edges to construct $CM(\bd)$; now there are some unpaired half-edges left (exactly $\bd'_v-\bd_v$ many incident to $v$), pairing which we obtain $CM(\bd')$.

How to use this? Well, note that the distance between two vertices can only decrease by the inclusion of more edges, so that it is enough to prove the lower bound for a dominating graph $\G^*_n$ on the same vertex set that contains $\G_n$ as a subgraph. We construct this dominating graph as follows. We give every vertex some extra half-edges, such that the following holds:
\begin{itemize}
    \item For every $i,j\in [k]$, \eqref{assump:regularity_1_2} holds with $D_{(i,j)}$ replaced on the right-hand side by a common random variable $D^*$ that stochastically dominates $D_{(i,j)}$ for all $i,j\in [k]$, and in particular satisfying
    \begin{align*}
        \Exp{D^*}\geq \max_{i,j\in [k]}\Exp{D_{(i,j)}}.\numberthis\label{eq:D^*_assump}
    \end{align*}
    \item For every $i,j,l\in [k]$ with $i\neq l$, \eqref{eq:assump_regularity_tensor} holds with a common random variable $D^{\dagger}$ replacing $D_{(i,j,l)}$ on the right-hand side, that stochastically dominates $D_{(i,j,l)}$ for all $i,j,l\in [k]$ with $i\neq l$, and in particular satisfying
    \begin{align*}
        \Exp{D^\dagger}\geq \max_{i,j,l\in [k]:i\neq l}\Exp{D_{(i,j,l)}}.\numberthis\label{eq:D^dagger_assump}
    \end{align*}
\end{itemize}
In particular, by the coupling discussed in the previous paragraph, the dominating graph $\G_n^*$ contains $\G_n$ as a subgraph, and it is enough to show that with positive probability $d_{\G_n^*}(o_1,o_2)$ is at least $K_* \log n$. 

We rely on the local limit of $\G_n^*$ for this. Applying Theorem \ref{thm:loc_lim}, it can be checked that the local limit $\cT^*_\infty$ is a simple $2$-type branching process tree, described as follows. For any vertex $v \in \cT^*_\infty$, we let $D^{(1)}_v$ and $D^{(2)}_v$ be respectively the number of type-$1$ and type-$2$ offspring of $v$. We also let $\varnothing$ denote the root of $\cT^*_\infty$.
\begin{itemize}
    \item For each vertex $v$ in $\cT^*_\infty$, $D^{(1)}_v$ and $D^{(2)}_v$ are independent, and for different vertices $u\neq v$ in $\cT^*_\infty$, $(D^{(1)}_u,D^{(2)}_u)$ is independent of $(D^{(1)}_v,D^{(2)}_v)$.
    \item We have $D^{(1)}_\varnothing\stackrel{d}{=}\sum_{i=1}^k D^*_i$, where $\{D^*_i:1\leq i\leq k\}$ form an i.i.d.\ collection of copies of $D^*$. Further, $D^{(2)}_\varnothing=0$.
    \item Recalling from \eqref{eq:shifted_sb} the shifted size-biased copy $\underline{X}$ corresponding to a variable $X$, for any $v\in \cT^*_\infty$ such that $v \neq \varnothing$, we have $D^{(1)}_v\stackrel{d}{=}\underline{D^*}$ and $D^{(2)}_v\stackrel{d}{=}\sum_{i=1}^{k-1}D^\dagger_i$, where $\{D^\dagger_i:1\leq i\leq k-1\}$ form an i.i.d.\ collection of copies of $D^\dagger$.
\end{itemize} 
Intuitively, it is useful to think of $D^{(1)}_v$ is the number of offspring of $v$ that has the same type as the parent of $v$, and $D^{(2)}_v$ is the total number of offspring of $v$ with a type different than that of its parent. Clearly, $\cT^*_\infty$ is a branching process tree, which we claim survives forever, and in particular, its mean offspring number satisfies \begin{align*}
   \mu:=(k-1)\E[\underline{D^*}]\Exp{D^\dagger}>1.\numberthis \label{eq:dominating_supercriticality}
\end{align*} The above claim is a simple consequence of the fact that $\eta<1$, so that the original local limit $\cT_\infty$ survives, and the fact that the number of offspring of any vertex $v$ in $\cT_\infty$ is stochastically dominated by the number of offspring of any vertex in $\cT^*_\infty$ (so that the latter tree also survives).

The coupling of breadth-first exploration on $\G^*_n$ from the random vertex $o_1$ together with the breadth-first exploration on $\cT^*_\infty$ from the root $\varnothing$ (with $\varnothing$ in $\cT^*_\infty$ corresponding to $o_1$ in $\G^*_n$), is very similar to the coupling as discussed in the proof of Theorem \ref{thm:loc_lim} (also see the `\textbf{Restricted exploration from $o_1$}' paragraph in Section \ref{sec:truncation_argument}), and can be formulated analogously. 

Let us denote these exploration clusters at some step $t>0$ respectively by $\G^*_n(t)$ and $\cT^*_\infty(t)$. Recall the concept of \emph{conflict} at some step $t+1$ from the proof of Theorem \ref{thm:loc_lim}; in the present context, this is the first time we either resample a half-edge from $\G^*_n(t)$, or pair a half-edge incident to a vertex already discovered in $\G^*_n(t)$ (thus creating a cycle). 

Then, as in \eqref{eq:stop_time_tau_n}, let us define the stopping time
\begin{align*}
    \tau_n:=\inf\{t\geq 0:\textrm{there is a conflict at step $t$}\}.\numberthis \label{eq:stop_time_B_bound}
\end{align*}
Clearly, $(\G^*_n(t))_{0 \leq t\leq \tau_n}$ can be coupled with $(\cT^*_\infty(t))_{0 \leq t \leq \tau_n}$ such that they are equal. 
We claim that 
\begin{align*}
   \liminf_{n\to \infty} \Prob{\tau_n<\eps n^{1/4}}=O(\eps^2),\numberthis\label{eq:claim_stop_time_tau_n_B}
\end{align*}
as $\eps\to 0$. To see this, on the event $\{\tau_n<\eps n^{1/4}\}$, by a union bound over the different possibilities of $\tau_n=t$ for $t=0,1,\dots,\eps n^{1/4}$,
\begin{align*}
    &\Prob{\tau_n<\eps n^{1/4}}\leq \sum_{t=0}^{\eps n^{1/4}}\frac{t}{\min_{i,j\in [k]}|H_{i \rightarrow j}|-\eps \sqrt{n}}+\sum_{t=0}^{\eps  n^{1/4}}\frac{t\max_{i,j\in [k]}\Delta_n(i,j)}{\min_{i,j\in [k]}|H_{i \rightarrow j}|-\eps \sqrt{n}},\numberthis\label{eq:stop_tau_n_analogous}
\end{align*}
where recall $\Delta_n(i,j)=\max_{v\in \cN^{(i)}}d_v^{(i,j)}$. Clearly the first term above is $o(1)$ as $n \to \infty$ since $|H_{i \rightarrow j}|=\Theta(n)$ for all $i,j\in [k]$ (recall for this proof we assume $(i,j)$ is GOOD for all $i,j\in [k]$). Finally, for the second term, note that by the $r=2$ condition of \eqref{assump:regularity_1_2}, which we assume to be true with $D_{(i,j)}$ replaced by $D^*$ on the right-hand side (recall \eqref{eq:D^*_assump}), we must have 
\begin{align*}
    \max_{i,j\in [k]}\Delta_n(i,j)\leq C\sqrt{n}
\end{align*}
for all large $n$, for a universal constant $C>0$ (e.g., $C$ can be taken as $2\Exp{(D^*)^2}$). Thus, after letting $n\to \infty$, the second term above is $O(\eps^2)$, which proves \eqref{eq:claim_stop_time_tau_n_B}.

Thus, we conclude that on the event $\{\tau_n\geq \eps n^{1/4}\}$, $(\G^*_n(t):0\leq t \leq \eps n^{1/4})$ can be coupled with $(\cT^*_\infty(t):0\leq t \leq \eps n^{1/4})$ such that they are equal. On the other hand, $\cT^*_\infty$ is a supercritical branching process, with mean offspring number as in \eqref{eq:dominating_supercriticality}. In particular, it grows \emph{exponentially fast}: the expected size of generation $l$ in $\cT^*_\infty$ is of order $\mu^l$, for any $l\geq 1$.

What this means is, since in the breadth-first exploration process, we explore the tree $\cT^*_\infty$ generation-by-generation, to explore $\eps n^{1/4}$ many vertices, we must have explored in order $$\log (\eps n^{1/4})=\Theta(\log n)$$ many generations of $\cT^*_n$. In other words, letting $\mathcal{Z}_l$ denote the size of the $l$-th generation of $\cT^*_\infty$, if we define $l(\eps)$ as
\begin{align*}
    Z_0+\dots+Z_{l(\eps)}\leq \eps n^{1/4}\leq Z_0+\dots+Z_{l(\eps)+1},
\end{align*}
then the random variable $l(\eps)$ scales like $\log n$. This can be established, for example, by drawing an analogy of the last display with \eqref{eq:ineq_t(eps)_epssqrtn}, with $t(\eps)$ in \eqref{eq:ineq_t(eps)_epssqrtn} corresponding to $l(\eps)$ above, and mimicking the argument that gives the upper bound on $t(\eps)$ as in \eqref{eq:t_eps_UB} and the lower bound on $t(\eps)$ as in \eqref{eq:t(eps)_LB}. 

We conclude that on the positive probability event $\cE_1=\{\tau_n>\eps n^{1/4}\}$, we can couple breadth-first exploration up to $\eps n^{1/4}$ many vertices starting from $o_1$ in $\G^*_n$ with breadth-first exploration up to these many vertices on $\cT^*_\infty$, such that they are equal, and further, given $\cE_1$, the event $\cE_2$ that the cluster $\G^*_n(\eps n^{1/4})$ contains the ball $B_{\G^*_n}(o_1, K_*\log n)$ for some constant $K_*>0$, has positive probability. Thus, given $\cE_1\cap\cE_2$, since $|B_{\G^*_n}(o_1, K_*\log n)|\leq \eps n^{1/4}=o(n)$, with (conditional) probability at most 
\begin{align*}
    \frac{\eps n^{1/4}}{n-1}=o(1)
\end{align*}
does $o_2$ fall in $B_{\G^*_n}(o_1, K_*\log n)$, so that with (conditional) probability at least $1-o(1)$ one has $d_{\G^*_n}(o_1,o_2)\geq K_* \log n$. This implies the required positive lower bound on the probability of the event $\{d_{\G^*_n}(o_1,o_2)\geq K_* \log n\}$. 
\end{proof}

\section{Discussion}\label{sec:disc}
\subsection{Extracting giants via simple loops}\label{sec:disc_giant}
One of the key technical contributions of our paper is Section \ref{sec:survival}, where we developed a technique that helps us in extracting giant components via breadth- first exploration processes, without the assumption of irreducibility of the limiting branching process. We take a moment to discuss this principle a bit more generally now.

Consider a sequence of (possibly random) graphs $(G_n)_{n \geq 1}$, where $G_n=(V(G_n),E(G_n))$, converging locally (in the sense of Theorem \ref{thm:loc_lim}) to a multi-type branching process tree $\sT_\infty$, with finitely many types. Here let us work with the \emph{standard} definition of such processes, where given a vertex $u$ has type $p$, it gives birth to $J_{(p,q)}$ many vertices of type $q$, in expectation. That is, contrary to our local limit $\cT_\infty$ in Theorem \ref{thm:loc_lim}, we do not assume any dependence on the type of the \emph{parent} of $u$; our setting is more general in the sense that declaring simply $D_{(i,j,l)}=J_{(j,l)}$ for all $i$ in the type space $S$ gives back the more standard setting.

One can (analogous to Definition \ref{def:type graph}) define a graph structure on $S$, by letting a directed edge from $i$ to $j$ to be present whenever $J_{(i,j)}>0$, and in this setting, translating our Theorem \ref{thm:extinct_1dep} from $D_{(i,j,l)}$ to $J_{(j,l)}$, we obtain that $\sT_\infty$ goes extinct if and only if 
\begin{align*}
    \max_{s\in S: p_s>0}\max_{v\in V(s)}\max_{c\in \sL(v)}\sM(c)\leq 1,\numberthis \label{eq:extinct_MTBP}
\end{align*}
which is to be understood as the analogue of \eqref{eq:ext_BP_condition}, where $p_s$ is simply the probability that the root $\varnothing$ of $\sT_\infty$ has type $s$, $V(s)$ is the set of all vertices reachable via a directed path from $s$, and $\sL(v)$ is the set of all directed simple loops $c=(v=v_0,v_1,\dots,v_l=v)$ starting and ending at $v$, where for any such $c$, $\sM(c)=J_{v_0,v_1} J_{v_1,v_2}\dots J_{v_{l-1},v_l}$.

In particular, analogous to our approach in Section \ref{sec:giant_proof}, when $\sT_\infty$ survives, and thus there exists a loop $c$ with $\sM(c)>1$, one can extract out a giant component in $G_n$ via the following strategy:
\begin{itemize}
    \item Keep only those edges $e\in E(G_n)$, where one endpoint of $e$ has type $v_i$ and the other has type $v_{i+1}$ for some $i=0,\dots,l-1$, and call the obtained subgraph $G_n(c)$.
    \item Start exploring in a breadth-first manner from a random vertex $o$ in $G_n(c)$, and show that this exploration process can be coupled to breadth-first exploration of a standard Bienaymé-Galton-Watson branching process tree with mean offspring number $\sM(c)>1$, so that it survives, enabling the component of $o$ in $G_n(c)$ to grow up to a linear size.
\end{itemize}

We expect this general strategy of extracting large components via `simple loops' to find applicability in the analysis of random graphs with a predefined community structure, much like the model we work with in the current paper, giving naturally rise to reducible local limits. Here we work with the case when these graphs within and across communities are uniform graphs with a specified degree sequence. However, even in the case when these (within and across community graphs) are inhomogeneous random graphs \cite{bollobas2007phase} or preferential attachment graphs \cite{barabasi1999emergence}, or even a mixture of different types of random graphs, this strategy is a natural approach to extract giant connected components. 

\subsection{Simple survival criterion for multi-type branching processes}\label{sec:disc_ext} In addition to applicability in extracting giants as discussed in the previous section, we expect the simple criterion \eqref{eq:extinct_MTBP} guaranteeing extinction of a multi-type branching process to be generally useful. Recall that a multi-type branching process with mean offspring matrix $J=(J_{(p,q)})_{p,q\in S}$ (following notation from last section) goes extinct if and only if $\|J\|\leq 1$, where $\|J\|$ denotes the spectral radius (maximum eigenvalue) of $J$, see \cite[Chapter V]{athreya2012branching}. 

There can be situations where computing the spectral radius may be challenging, and for such cases, to guarantee non-extinction, we expect the negation of \eqref{eq:extinct_MTBP} to be a little simpler to verify, than computing $\|J\|$ and checking whether $\|J\|>1$. Namely, one needs only to produce a simple loop $c=(v=v_0,v_1\dots,v_l=v)$ in the type graph with $\sM(c)>1$, such that there is a type $s\in S$ that equals the root type with positive probability, and such that there is a directed path from $s$ to some vertex of the loop $c$.

\subsection{Typical distance lower bound}\label{sec:disc_TD}
Embarrassingly, we cannot prove Theorem \ref{thm:dist} part (3.) without the extra assumption that all pairs $(i,j)$ are $\rm GOOD$. We do not expect this to be at all necessary; intuitively, the present of some BAD pairs should imply the presence of fewer edges as compared to the case when all pairs are GOOD, and thus distances should be \emph{larger} in the former than in the latter case. On the basis of this intuition, we conjecture:
\begin{conjecture}\label{conj:dist}
    Under the assumption of Theorem \ref{thm:loc_lim} we can find $K>0$ such that
    \begin{align*}
        \liminf_{n \to \infty}\Prob{d_{G_n}(o_1,o_2)>K \log n}>0.
    \end{align*}
\end{conjecture}
Consider the following problem.
\begin{problem}\label{prob:dist}
    Consider two graphic degree sequences (or bigraphic bi-degree sequences) $\bd'$ and $\bd$ on the same vertex set such that $\bd'$ dominates $\bd$ pointwise. If $G(\bd')$ and $G(\bd)$ be respectively uniformly sampled random graphs with degree sequences $\bd'$ and $\bd$, is there a coupling such that $G(\bd')$ contains $G(\bd)$ as a subgraph?
\end{problem}
An affirmative answer to this problem leads to a solution of Conjecture \ref{conj:dist} essentially following the approach we outlined in the proof sketch of Theorem \ref{thm:dist} part (3.): one can use the coupling to contain the (within or across partition) subgraphs corresponding to BAD pairs, inside dominating subgraphs corresponding to GOOD pairs, and it is enough to prove the logarithmic distance lower bound for this dominating subgraph. But now since all pairs are GOOD, one can use Corollary \ref{cor:sample_Gn} to sample these graphs using configuration models, and essentially follow the same argument as we have outlined in the proof sketch.

\paragraph{Acknowledgements.} Thanks to Partha S. Dey for looking at a first draft and giving me numerous helpful suggestions.

\bibliographystyle{abbrvnat}
\bibliography{ref}

\appendix

\section{Proof of Lemma \ref{lem:seq_grow}}\label{app_sec:lem_seq_grow}
\begin{proof}
    The proof of the first set of bounds
    \begin{align*}
        \frac{\alpha m^i}{A}\leq \underline{a}_i<\Bar{a}_i\leq \alpha Am^i \numberthis\label{eq:seq_grow_first_set}
    \end{align*}follows the same argument as the proof of \cite[Lemma 4.11]{van2024random}. For the second set of bounds, we only prove the upper bound, as the argument for the lower bound is similar. We claim that uniformly over $i\geq 1$, we can find $B$ sufficiently large such that
    \begin{align*}
        \left(1+\frac{i\beta^\gamma+(m'\alpha B)^\gamma \left(\frac{m^{(i+1)\gamma}-1}{m-1}\right)}{m'\alpha A\left(\frac{m^{i+1}-1}{m-1} \right)} \right)\leq \frac{(m-1)B}{Am},\numberthis \label{eq:appndx_choice_B}
    \end{align*}
    where $A$ is as in \eqref{eq:seq_grow_first_set}. To see this, first note that since $\gamma<1$ and $m>1$, we have an upper bound
    \begin{align*}
        \frac{i\beta^\gamma+(m'\alpha B)^\gamma \left(\frac{m^{(i+1)\gamma}-1}{m-1}\right)}{m'\alpha A\left(\frac{m^{i+1}-1}{m-1} \right)}\leq KB^\gamma
    \end{align*}
    uniformly over $i\geq 1$, for some constant $K>0$ independent of $B$, since the $i\to \infty$ limit of the left-hand side above is $0$. Finally, again since $\gamma<1$, the right-hand side above is at most $\frac{(m-1)B}{Am}$ for $B$ sufficiently large.

    Now, we are in a position to argue for the required upper bound. For an inductive argument, assume the upper bound $$\Bar{b}_l\leq \beta+m'\alpha B m^l$$ holds for all $l=0,1,\dots,i$, where $B$ is as in \eqref{eq:appndx_choice_B} and we want to establish this for $l=i+1$. By definition
    \begin{align*}
        \Bar{b}_{i+1}=\Bar{b}_i+m'\Bar{a}_i+(\Bar{b}_i)^\gamma,
    \end{align*}
    iterating which, we obtain
    \begin{align*}
        \Bar{b}_{i+1}=\beta+m'(\Bar{a}_0+\dots+\Bar{a}_i)+\left((\Bar{b}_0)^\gamma+\dots+(\Bar{b}_i)^\gamma \right).
    \end{align*}
By \eqref{eq:seq_grow_first_set}, we have the upper bound
\begin{align*}
    \Bar{b}_{i+1}&\leq \beta+m'\alpha A\left(\frac{m^{i+1}-1}{m-1} \right)+\left((\Bar{b}_0)^\gamma+\dots+(\Bar{b}_i)^\gamma \right)\\&=\beta+m'\alpha A\left(\frac{m^{i+1}-1}{m-1} \right)\left(1+\frac{(\Bar{b}_0)^\gamma+\dots+(\Bar{b}_i)^\gamma}{m'\alpha A\left(\frac{m^{i+1}-1}{m-1} \right)} \right)\\&\leq \beta+m'\alpha A\left(\frac{m^{i+1}-1}{m-1} \right)\left(1+\frac{i\beta^\gamma+(m'\alpha B)^\gamma \left(\frac{m^{(i+1)\gamma}-1}{m-1}\right)}{m'\alpha A\left(\frac{m^{i+1}-1}{m-1} \right)} \right)\\&\leq \beta+m'\alpha Bm^{i+1},
\end{align*}
concluding the proof of the upper bound, where the last but one step we used the induction hypothesis and in the last step we used \eqref{eq:appndx_choice_B}. \end{proof}

\end{document}